\title[Boij-S\"oderberg theory]{Boij-S\"oderberg theory: Introduction 
and survey}
\begin{document}

\author{Gunnar Fl{\o}ystad}
\address{Matematisk Institutt\\
         Johs. Brunsgt. 12\\
         5008 Bergen}
\email{gunnar@mi.uib.no}


\keywords{Betti diagrams, cohomology of vector bundles, Cohen-Macaulay modules,
pure resolutions, supernatural bundles}

\begin{abstract}
{Boij-S\"oderberg theory describes the Betti diagrams of graded modules
over the polynomial ring, up to multiplication by a rational number.
Analog Eisenbud-Schreyer theory describes the cohomology tables
of vector  bundles on projective spaces up to rational multiple.
We give an introduction and survey of these newly developed areas.}
\end{abstract}

\subjclass{Primary: 13D02, 14F05; Secondary: 13C14, 14N99}
\maketitle
\tableofcontents

\theoremstyle{plain}
\newtheorem{theorem}{Theorem}[section]
\newtheorem{corollary}[theorem]{Corollary}
\newtheorem*{main}{Main Theorem}
\newtheorem{lemma}[theorem]{Lemma}
\newtheorem{proposition}[theorem]{Proposition}
\newtheorem{conjecture}[theorem]{Conjecture}
\newtheorem*{theoremA}{Theorem}
\newtheorem*{theoremB}{Theorem}
\newtheorem*{pieri}{Pieri's rule}
\theoremstyle{definition}
\newtheorem{definition}[theorem]{Definition}
\newtheorem{question}[theorem]{Question}
\newtheorem{problem}{Problem}
\theoremstyle{remark}
\newtheorem{notation}[theorem]{Notation}
\newtheorem{remark}[theorem]{Remark}
\newtheorem{example}[theorem]{Example}
\newtheorem{claim}{Claim}


\newcommand{\psp}[1]{{{\bf P}^{#1}}}
\newcommand{\psr}[1]{{\bf P}(#1)}
\newcommand{\opw}{\op_{\psr{W}}}
\newcommand{\go}{\op}

\newcommand{\ini}[1]{\text{in}(#1)}
\newcommand{\gin}[1]{\text{gin}(#1)}
\newcommand{\kr}{{\Bbbk}}
\newcommand{\kk}{{\Bbbk}}
\newcommand{\pd}{\partial}
\newcommand{\vardel}{\partial}
\renewcommand{\tt}{{\bf t}}


\newcommand{\coh}{{{\text{{\rm coh}}}}}


\newcommand{\modv}[1]{{#1}\text{-{mod}}}
\newcommand{\modstab}[1]{{#1}-\underline{\text{mod}}}

\newcommand{\sut}{{}^{\tau}}
\newcommand{\sumit}{{}^{-\tau}}
\newcommand{\til}{\thicksim}

\newcommand{\totp}{\text{Tot}^{\prod}}
\newcommand{\dsum}{\bigoplus}
\newcommand{\dprod}{\prod}
\newcommand{\lsum}{\oplus}
\newcommand{\lprod}{\Pi}

\newcommand{\La}{{\Lambda}}
\newcommand{\lam}{{\lambda}}
\newcommand{\GL}{{GL}}

\newcommand{\sirstj}{\circledast}

\newcommand{\she}{\EuScript{S}\text{h}}
\newcommand{\cm}{\EuScript{CM}}
\newcommand{\cmd}{\EuScript{CM}^\dagger}
\newcommand{\cmri}{\EuScript{CM}^\circ}
\newcommand{\cler}{\EuScript{CL}}
\newcommand{\clerd}{\EuScript{CL}^\dagger}
\newcommand{\clerri}{\EuScript{CL}^\circ}
\newcommand{\gor}{\EuScript{G}}
\newcommand{\gF}{\mathcal{F}}
\newcommand{\gG}{\mathcal{G}}
\newcommand{\gM}{\mathcal{M}}
\newcommand{\gE}{\mathcal{E}}
\newcommand{\gD}{\mathcal{D}}
\newcommand{\gI}{\mathcal{I}}
\newcommand{\gP}{\mathcal{P}}
\newcommand{\gK}{\mathcal{K}}
\newcommand{\gL}{\mathcal{L}}
\newcommand{\gS}{\mathcal{S}}
\newcommand{\gC}{\mathcal{C}}
\newcommand{\gO}{\mathcal{O}}
\newcommand{\gJ}{\mathcal{J}}
\newcommand{\gU}{\mathcal{U}}
\newcommand{\mm}{\mathfrak{m}}

\newcommand{\dlim} {\varinjlim}
\newcommand{\ilim} {\varprojlim}

\newcommand{\CM}{\text{CM}}
\newcommand{\Mon}{\text{Mon}}


\newcommand{\Kom}{\text{Kom}}


\newcommand{\EH}{{\mathbf H}}
\newcommand{\res}{\text{res}}
\newcommand{\Hom}{\text{Hom}}
\newcommand{\inhom}{{\underline{\text{Hom}}}}
\newcommand{\Ext}{\text{Ext}}
\newcommand{\Tor}{\text{Tor}}
\newcommand{\ghom}{\mathcal{H}om}
\newcommand{\gext}{\mathcal{E}xt}
\newcommand{\id}{\text{{id}}}
\newcommand{\im}{\text{im}\,}
\newcommand{\codim} {\text{codim}\,}
\newcommand{\resol}{\text{resol}\,}
\newcommand{\rank}{\text{rank}\,}
\newcommand{\lpd}{\text{lpd}\,}
\newcommand{\coker}{\text{coker}\,}
\newcommand{\supp}{\text{supp}\,}
\newcommand{\Ad}{A_\cdot}
\newcommand{\Bd}{B_\cdot}
\newcommand{\Gd}{G_\cdot}


\newcommand{\sus}{\subseteq}
\newcommand{\sups}{\supseteq}
\newcommand{\pil}{\rightarrow}
\newcommand{\vpil}{\leftarrow}
\newcommand{\lvpil}{\longleftarrow}
\newcommand{\rpil}{\leftarrow}
\newcommand{\lpil}{\longrightarrow}
\newcommand{\inpil}{\hookrightarrow}
\newcommand{\pils}{\twoheadrightarrow}
\newcommand{\projpil}{\dashrightarrow}
\newcommand{\dotpil}{\dashrightarrow}
\newcommand{\adj}[2]{\overset{#1}{\underset{#2}{\rightleftarrows}}}
\newcommand{\mto}[1]{\stackrel{#1}\longrightarrow}
\newcommand{\vmto}[1]{\overset{\tiny{#1}}{\longleftarrow}}
\newcommand{\mtoelm}[1]{\stackrel{#1}\mapsto}

\newcommand{\eqv}{\Leftrightarrow}
\newcommand{\impl}{\Rightarrow}

\newcommand{\iso}{\cong}
\newcommand{\te}{\otimes}
\newcommand{\tek}{\te_\kr}
\newcommand{\sqte}{\te}
\newcommand{\into}[1]{\hookrightarrow{#1}}
\newcommand{\ekv}{\Leftrightarrow}
\newcommand{\equi}{\simeq}
\newcommand{\isopil}{\overset{\cong}{\lpil}}
\newcommand{\equipil}{\overset{\equi}{\lpil}}
\newcommand{\ispil}{\isopil}
\newcommand{\vvi}{\langle}
\newcommand{\hvi}{\rangle}
\newcommand{\susneq}{\subsetneq}
\newcommand{\sgn}{\text{sign}}


\newcommand{\xd}{\check{x}}
\newcommand{\ortog}{\bot}
\newcommand{\tL}{\tilde{L}}
\newcommand{\tM}{\tilde{M}}
\newcommand{\tH}{\tilde{H}}
\newcommand{\tvH}{\widetilde{H}}
\newcommand{\tvh}{\widetilde{h}}
\newcommand{\tV}{\tilde{V}}
\newcommand{\tS}{\tilde{S}}
\newcommand{\tT}{\tilde{T}}
\newcommand{\tR}{\tilde{R}}
\newcommand{\tf}{\tilde{f}}
\newcommand{\ts}{\tilde{s}}
\newcommand{\tp}{\tilde{p}}
\newcommand{\tr}{\tilde{r}}
\newcommand{\tfst}{\tilde{f}_*}
\newcommand{\empt}{\emptyset}
\newcommand{\bfa}{{\bf a}}
\newcommand{\bfb}{{\bf b}}
\newcommand{\bfd}{{\bf d}}
\newcommand{\bfe}{{\bf e}}
\newcommand{\bfp}{{\bf p}}
\newcommand{\bfc}{{\bf c}}
\newcommand{\bfl}{{\bf \ell}}
\newcommand{\bfz}{{\bf z}}
\newcommand{\bfj}{{\bf j}}
\newcommand{\ubfd}{\underline{\bfd}}
\newcommand{\la}{\lambda}
\newcommand{\bfen}{{\mathbf 1}}
\newcommand{\ep}{\epsilon}
\newcommand{\en}{r}
\newcommand{\tu}{s}
\newcommand{\integ}{{int}}
\newcommand{\module}{{mod}}
\newcommand{\inc}{{deg}}
\newcommand{\dec}{{root}}

\newcommand{\ome}{\omega_E}

\newcommand{\bevis}{{\bf Proof. }}
\newcommand{\demofin}{\qed \vskip 3.5mm}
\newcommand{\nyp}[1]{\noindent {\bf (#1)}}
\newcommand{\demo}{{\it Proof. }}
\newcommand{\demodone}{\demofin}
\newcommand{\parg}{{\vskip 2mm \addtocounter{theorem}{1}  
                   \noindent {\bf \thetheorem .} \hskip 1.5mm }}

\newcommand{\lcm}{{\text{lcm}}}


\newcommand{\dl}{\Delta}
\newcommand{\cdel}{{C\Delta}}
\newcommand{\cdelp}{{C\Delta^{\prime}}}
\newcommand{\dlst}{\Delta^*}
\newcommand{\Sdl}{{\mathcal S}_{\dl}}
\newcommand{\lk}{\text{lk}}
\newcommand{\lkd}{\lk_\Delta}
\newcommand{\lkp}[2]{\lk_{#1} {#2}}
\newcommand{\del}{\Delta}
\newcommand{\delr}{\Delta_{-R}}
\newcommand{\dd}{{\dim \del}}

\renewcommand{\aa}{{\bf a}}
\newcommand{\bb}{{\bf b}}
\newcommand{\cc}{{\bf c}}
\newcommand{\xx}{{\bf x}}
\newcommand{\yy}{{\bf y}}
\newcommand{\zz}{{\bf z}}
\newcommand{\mv}{{\xx^{\aa_v}}}
\newcommand{\mF}{{\xx^{\aa_F}}}

\newcommand{\proj}[1]{{\mathbb P}^{#1}}
\newcommand{\hele}{{\mathbb Z}}
\newcommand{\nat}{{\mathbb N}}
\newcommand{\rat}{{\mathbb Q}}

\newcommand{\pnm}{{\bf P}^{n-1}}
\newcommand{\opnm}{{\go_{\pnm}}}
\newcommand{\op}[1]{\gO_{\proj{#1}}}
\newcommand{\ompn}{\Omega_{\proj{n}}}
\newcommand{\opn}{\op{n}}
\newcommand{\opm}{\op{m}}

\newcommand{\ompnm}{\omega_{\pnm}}

\newcommand{\dt}{{\displaystyle \cdot}}
\newcommand{\st}{\hskip 0.5mm {}^{\rule{0.4pt}{1.5mm}}}              
\newcommand{\disk}{\scriptscriptstyle{\bullet}}

\newcommand{\cF}{F_\dt}
\newcommand{\Fd}{F_{\disk}}
\newcommand{\pol}{f}

\newcommand{\disc}{\circle*{5}}

\newcommand{\Dab}{{\mathbb D}(\bfa, \bfb)}
\newcommand{\Ddab}{B(\bfa, \bfb)}
\newcommand{\Tab}{{\mathbb T}(\bfa, \bfb)}
\newcommand{\Cab}{C(\bfa, \bfb)}
\newcommand{\Pab}{B(\bfa, \bfb)}
\newcommand{\Lhkab}{L^{HK}(\bfa, \bfb)}
\newcommand{\Lab}{L(\bfa, \bfb)}
\newcommand{\Sigab}{\Sigma(\bfa, \bfb)}
\newcommand{\hlow}{h_{low}}
\newcommand{\hup}{h_{up}}
\newcommand{\up}[2]{\hup}
\newcommand{\facet}[2]{{\bf facet}(#1, #2)}
\newcommand{\BS}{Boij-S\"oderberg}
\newcommand{\bstar}{{\mathbb B}^*}
\newcommand{\Symm}{\mbox{Symm}}
\newcommand{\zinc}[1]{{\mathbb Z}^{#1}_\inc}
\newcommand{\zdec}[1]{{\mathbb Z}^{#1}_\dec}

\def\CC{{\mathbb C}}
\def\GG{{\mathbb G}}
\def\ZZ{{\mathbb Z}}
\def\NN{{\mathbb N}}
\def\RR{{\mathbb R}}
\def\OO{{\mathbb O}}
\def\QQ{{\mathbb Q}}
\def\VV{{\mathbb V}}
\def\PP{{\mathbb P}}
\def\EE{{\mathbb E}}
\def\FF{{\mathbb F}}
\def\AA{{\mathbb A}}

\section*{Introduction}

In November 2006 M.Boij and J.S\"oderberg put out on the arXiv a preprint 
"Graded
Betti numbers of Cohen-Macaulay modules and the multiplicity conjecture".
The paper concerned resolutions of graded modules over the polynomial
ring $S = \kr[x_1, \ldots, x_n]$ over a field $\kr$. It put forth
two striking conjectures on the form of their resolutions. These
conjectures and their subsequent proofs 
have put the greatest floodlight on our understanding of 
resolutions over polynomial rings since the inception of the field 
in 1890. In this year
David Hilbert published his syzygy theorem stating that a graded ideal over the
polynomial ring in $n$ variables has a resolution of length less than
or equal to $n$.  Resolutions of modules both over the
polynomial ring and other rings have since then been one of the pivotal topics
of algebraic geometry and commutative algebra, and more generally in the field
of associative algebras.

   For the next half a year after Boij and S\"oderberg put out their 
conjectures, they were
incubating
in the mathematical community, and probably not so much exposed to 
attacks. The turning point was the conference at MSRI, Berkeley in
April 2007 in honor of David Eisenbud 60'th birthday, where the conjectures
became a topic of discussion. 

For those familiar with resolutions of graded modules over the 
polynomial ring, a complete classification of their numerical invariants,
the graded Betti numbers $(\beta_{ij})$, seemed a momentous task, 
completely out of reach (and still does). Perhaps the central idea 
of Boij and S\"oderberg is this: We don't try to determine if 
$(\beta_{ij})$ are the graded Betti numbers of a module, but let us see
if we can determine if $m \cdot (\beta_{ij})$ are the graded Betti numbers 
of a module if $m$ is some big integer. 

This is the idea of {\it stability} which has been so successful in 
stable homotopy theory in algebraic topology and rational divisor theory
in algebraic geometry.

Another way to phrase the idea of Boij and S\"oderberg is that we
do not determine the graded Betti numbers $(\beta_{ij})$ but 
rather the positive rays $t \cdot (\beta_{ij})$ where $t$ is a
positive rational number. It is easy to see that these rays form
a cone in a suitable vector space over the rational numbers.

The conjectures of Boij and S\"oderberg considered the cone $B$
of such diagrams coming from modules of codimension $c$ 
with the shortest possible length of resolution, $c$. 
This is the class of Cohen-Macaulay modules.
The first conjecture states precisely what
the extremal rays of the cone $B$ are. The diagrams on these rays
are called {\it pure diagrams}. They are the possible Betti diagrams
of {\it pure resolutions}, 
\begin{equation} \label{IntroLigPure} 
S(-d_0)^{\beta_{0,d_0}} \vpil S(-d_1)^{\beta_{1,d_1}} \vpil \cdots
\vpil S(-d_c)^{\beta_{c,d_c}} 
\end{equation}
of graded modules, where the length $c$ is equal to the codimension of the
module.
To prove this conjecture involved
two tasks. The first is to show that there are vectors on these rays 
which actually are Betti diagrams of modules. The second is to show that these
rays account for {\it all} the extremal rays in the cone $B$, 
in the sense that any Betti diagram is
a positive rational combination of vectors on these rays.
This last part was perhaps what people found most suspect. 
Eisenbud has said that his immediate reaction was that this could
not be true.

  Boij and S\"oderberg made a second conjecture giving a refined
description of the cone $B$. There is a partial order on the pure
diagrams, and in any chain in this partial order the pure diagrams
are linearly independent. Pure diagrams in a chain therefore 
generate a simplicial cone. Varying over the different chains we then
get a simplicial fan of Betti diagrams. The refinement of the conjectures
states that the realization of this  simplicial fan is the positive
cone $B$. In this way each Betti diagram lies on a unique minimal face
of the simplicial fan, and so we get a strong uniqueness statement
on how to write the Betti diagram of a module.

   After the MSRI conference in April 2007, Eisenbud and
the author independently started to look into the existence question,
to construct {\it pure resolutions} whose Betti diagram is a pure 
diagram. 
They came up with the construction of the $GL(n)$-equivariant resolution
described in Subsection \ref{SubsekExiEkvi}. Jerzy Weyman was 
instrumental in proving the exactness of this resolution and
the construction was published in a joint paper in 
September 2007 on the arXiv, \cite{EFW}.
In the same paper also appeared another construction of pure 
resolutions described in Subsection \ref{SubsekExiDeg}.

   After this success D. Eisenbud and F.-O. Schreyer went on to 
work on the other part of the conjectures.
And in December 2007
they published on the arXiv the proof of
the second part of the conjectures of Boij and S\"oderberg, \cite{ES}. 
But at least two more interesting things appeared in this paper.
They gave a construction of pure resolutions that worked in all 
characteristics. The constructions above, \cite{EFW} work only 
in characteristic
zero. But most startling, they discovered a surprising duality with
cohomology tables of algebraic vector bundles on projective spaces. And fairly 
parallel to the proof of the second Boij-S\"oderberg
they were able to give a complete description of all cohomology
tables of algebraic vector bundles on projective spaces, up to 
multiplication by a positive rational number. 

    In the wake of this a range of papers have followed, most
of which are discussed in this survey. But one thing still needs to be
addressed. What enticed Boij and S\"oderberg to come up with their
conjectures? The origin here lies in an observation by Huneke and 
Miller
from 1985, that if a Cohen-Macaulay quotient
ring of $A = S/I$ has pure resolution (\ref{IntroLigPure}) 
(so $d_0 = 0$ and $\beta_{0,d_0} = 1$),
then its multiplicity $e(A)$ is equal to the surprisingly simple expression  
\[ \frac{1}{c!} \cdot \prod_{i = 1}^c d_i. \]
This led naturally to consider resolutions $\Fd$ of Cohen-Macaulay 
quotient rings $A = S/I$ in general. 
In this case one has in each homological term $F_i$ in the
resolution a maximal twist $S(-a_i)$ (so $a_i$ is minimal) and a minimal 
twist $S(-b_i)$ occurring.
The {\it multiplicity conjecture} of Herzog, Huneke and Srinivasan, see 
\cite{HS} and \cite{HM}, stated that the multiplicity of the quotient
ring $A$ is in the following range
\[ \frac{1}{c!} \prod_{i = 1}^c a_i \leq e(A) \leq 
   \frac{1}{c!} \prod_{i = 1}^c b_i. \]
Over the next two decades a substantial number of papers were published on this
treating various classes of rings, and also various generalizations of this
conjecture. But efforts in general did not succeed because of
the lack of a strong enough understanding of the (numerical) 
structure of resolutions. Boij and S\"oderberg's central idea is to see the
above inequalities as a projection of convexity properties of the
Betti diagrams of graded Cohen-Macaulay modules: The pure
diagrams generate the extremal rays in the cone of Betti diagrams.

\medskip
\noindent {\it Notation.} 
The graded Betti numbers $\beta_{ij}(M)$ of a finitely generated
module $M$ are indexed by 
$i = 0, \ldots, n$ and $j \in \hele$. Only a finite number of 
these are nonzero. By a {\it diagram} we shall mean a collection 
of rational numbers $(\beta_{ij})$, indexed as above, with only a finite
number of them being nonzero.

\medskip
The organization of this paper is as follows. In Section \ref{SekRes} we
give the important notions, like the graded Betti numbers of 
a module, pure resolutions and Cohen-Macaulay modules.
Such modules have certain linear constraints on their
graded Betti numbers, the Herzog-K\"uhl equations, giving
a subspace $L^{HK}$ of the space of diagrams. We define the 
positive cone $B$ in $L^{HK}$ of Betti diagrams of Cohen-Macaulay modules. 
An important technical convenience is that we fix a ``window'' on
the diagrams, considering Betti diagrams where the $\beta_{ij}$
are nonzero only in a finite range of indices $(i,j)$. This makes the Betti
diagrams live in a finite dimensional vector space.
Then we present
the Boij-S\"oderberg conjectures. We give both the algorithmic version,
concerning the decomposition of Betti diagrams,
and the geometric version in terms of fans.

In Section \ref{SekFacet} we define the simplicial fan $\Sigma$ of diagrams. 
The goal is to show that its realization is the positive cone $B$, and
to do this we study the exterior facets of $\Sigma$. 
The main work of this section is to find the equations of these facets. 
They are the key to the duality with algebraic vector bundles, 
and the form of their equations are derived from suitable 
pairings between Betti diagrams and cohomology tables of
vector bundles.
The positivity of the pairings proves that the cone $B$ is contained
in the realization of $\Sigma$, which is one part of the conjectures.

The other part, that $\Sigma$ is contained in $B$, is shown in Section 3
by providing the existence of pure resolutions. We give in 
\ref{SubsekExiEkvi} the construction of the equivariant pure resolution
of \cite{EFW}, in \ref{SubsekExiChfree} the characteristic free resolution
of \cite{ES}, and in \ref{SubsekExiDeg} the second construction of 
\cite{EFW}. For cohomology of vector bundles, the bundles with 
supernatural cohomology play the analog role of pure resolutions. 
In \ref{SubsekExiEkvisup} we give the equivariant construction of 
supernatural bundles, and in \ref{SubsekExiChfreesup}
the characteristic free construction of \cite{ES}. 

In Section \ref{SekBunt} we first consider the cohomology of vector
bundles on projective spaces, and give the complete classification of
such tables up to multiplication by a positive rational number.
The argument runs analogous to what we do for Betti diagrams. 
We define the positive cone of cohomology tables $C$, and the simplicial
fan of tables $\Gamma$. We compute the equations of the exterior facets
of $\Gamma$ which again are derived from the pairings between 
Betti diagrams and cohomology tables.  The positivity of these
pairings show that $C \sus \Gamma$, and the existence of supernatural 
bundles that $\Gamma \sus C$, showing the desired equality $C = \Gamma$. 

Section \ref{SekExt} considers extensions of the previous results.
First in \ref{SubsekFurNC} we get the  
classification of graded Betti numbers of {\it all} 
modules up to positive rational multiples.
For cohomology of coherent sheaves there is not yet a classification, but
in \ref{SubsekBuntKnipp} the procedure to decompose cohomology tables
of vector bundles is extended to cohomology tables of coherent
sheaves. However this procedure involves an infinite number of steps,
so this decomposition involves an infinite sum.

Section \ref{SekFur} gives more results that have followed in the wake 
of the conjectures and their proofs.
The ultimate goal, to classify Betti diagrams of modules (not
just up to rational multiple) is considered in \ref{SubsekFurInt},
and consists mainly of examples of diagrams which are or are not the
Betti diagrams of modules. So far we have considered $\kr[x_1, \ldots, x_n]$
to be standard graded, i.e. each $\deg x_i = 1$. In \ref{SubsekFurGrad}
we consider other gradings and multigradings on the $x_i$. 
Subsection \ref{SubsekFurPoset} considers the partial order on pure
diagrams, so essential in defining the simplicial fan $\Sigma$. 
In \ref{SubsekFurComp} we inform on 
computer packages related to Boij-S\"oderberg theory, and in 
\ref{SubsekFurPro} we give some important open problems.

\medskip
\noindent {\it Acknowledgement.}{ We thank the referee for several corrections
and useful suggestions for improving the presentation.}

\section{The Boij-S\"oderberg conjectures}
\label{SekRes}

We work over the standard graded polynomial ring $S = \kr[x_1, \ldots, x_n]$.
For a graded module $M$ over $S$, we denote by $M_d$ its graded piece
of degree $d$, and by $M(-r)$ the module where degrees are shifted
so that $M(-r)_d = M_{d-r}$. 

\medskip
\noindent{\it Note.}
We shall always assume our modules to be
finitely generated and graded. 

\subsection{Resolutions and Betti diagrams}
A natural approach to understand such 
modules is to understand their numerical invariants. The most
immediate of these is of course the Hilbert function:
\[  h_M(d) = \dim_{\kr} M_d. \]
Another set of invariants is obtained by considering its minimal graded
free resolution:
\begin{equation}
\label{BSTLigRes} F_0 \vpil F_1 \vpil F_2 \vpil \cdots \vpil F_l
\end{equation}
Here each $F_i$ is a graded free $S$-module 
$\oplus_{j \in \hele} S(-j)^{\beta_{ij}}$. 

\begin{example} \label{BSTEksM}
Let $S = \kr[x,y]$ and $M$ be the quotient ring $S/(x^2, xy, y^3)$.
Then its minimal resolution is 
\[  S \xleftarrow{ \left [ \begin{matrix} x^2 &  xy &  y^3 
\end{matrix} \right ]}
S(-2)^2 \oplus S(-3)
\xleftarrow{\left [ \begin{matrix} y & 0 \\ -x & y^2 \\ 0 & -x \end{matrix} 
\right ]}
S(-3) \oplus S(-4). \]
\end{example}

The multiple $\beta_{i,j}$ of the term $S(-j)$ in the $i$'th 
homological part $F_i$ of the resolution, is called the $i$'th graded 
Betti number of degree $j$. These Betti numbers constitute another 
natural set of numerical invariants, and the ones that are the topic
of the present notes. By the resolution (\ref{BSTLigRes}) we see that
the graded Betti number determine the Hilbert function of $M$. In fact
the dimension $\dim_{\kr} M_d$ is the alternating sum 
$\sum (-1)^i \dim_\kr (F_i)_d$.  
The Betti numbers are however more refined numerical
invariants of graded modules than the Hilbert function.

\begin{example}
Let $M^\prime$ be the quotient ring $S/(x^2, y^2)$. Its minimal 
free resolution is 
\[ S \vpil S(-2)^2 \vpil S(-4) . \]
Then $M$ of Example \ref{BSTEksM} and $M^\prime$ have the same
Hilbert functions, but their graded Betti numbers are different.
\end{example}

\medskip

The Betti numbers are usually displayed in an array. The immediate natural
choice is to put $\beta_{i,j}$ in the $i$'th column and $j$'th row, 
so the diagram of Example \ref{BSTEksM} would be:


\begin{equation*}
\begin{matrix} 0 \\ 1 \\ 2 \\ 3 \\ 4
\end{matrix} 
\overset{ \begin{matrix} 0 & 1 & 2 \end{matrix} }
{\left [ 
\begin{matrix}
1 & 0 & 0 \\
0 & 0 & 0 \\
0 & 2 & 0 \\
0 & 1 & 1 \\
0 & 0 & 1
\end{matrix}
\right ]}
\end{equation*}

However, to reduce the number of rows, one uses the convention that
the $i$'th column is shifted $i$ steps up. Thus $\beta_{i,j}$ is put in the
$i$'th column and the $j-i$'th row. Alternatively, $\beta_{i,i+j}$ is 
put in the $i$'th column and $j$'th row. So the diagram above 
is displayed as :

\begin{equation}
\label{ResLigBedia}
\begin{matrix} 0 \\ 1 \\ 2
\end{matrix}
\overset{\begin{matrix} 0 & 1 & 2 \end{matrix}}
{\left [
\begin{matrix}
1 & 0 & 0 \\
0 & 2 & 1 \\
0 & 1 & 1
\end{matrix}
\right ]}
\end{equation}

A Betti diagram has columns indexed by $0, \ldots, n$ and rows indexed
by elements of $\hele$, 
but any Betti diagram (of a finitely generated graded module)
is nonzero only in a finite number of rows.
Our goal is to understand the possible Betti diagrams that can occur
for Cohen-Macaulay modules. This objective
seems however as of yet out of reach. The central idea of Boij-S\"oderberg
theory is rather to describe Betti diagrams up to a multiple by a rational
number. I.e. we do not determine if a diagram $\beta$ is
a Betti diagram of a module, but we will determine if $q\beta$ is
a Betti diagram for some positive rational number $q$.
By Hilbert's Syzygy Theorem we know that the length $l$ of the resolution
(\ref{BSTLigRes}) is $\leq n$. 
Thus we consider Betti diagrams to live in the $\rat$-vector space
${\mathbb D} = \oplus_{j \in \hele} {\rat}^{n+1}$,
with the $\beta_{ij}$ as coordinate functions. 
An element in this vector space, a collection of rational numbers
$(\beta_{ij})_{{i = 0, \ldots, n},{j \in \hele}}$ where only a finite number
is nonzero, is called a {\it diagram}. 

\subsection{The positive cone of Betti diagrams}

We want to make our Betti diagram live in a finite dimensional vector
space, so we fix a ``window'' in ${\mathbb D}$ as follows.
 Let $c \leq n$ and $\zinc{c+1}$ be the set of strictly increasing 
integer sequences
$(a_0, \ldots, a_c)$ in $\hele^{c+1}$. Such an element is
called a {\it degree sequence}. Then $\zinc{c+1}$ is a partially
ordered set with $\bfa \leq \bfb$ if $a_i \leq b_i$ for all 
$i = 0, \ldots, c$.
\begin{definition}
 For $\bfa, \bfb$ in $\zinc{c+1}$ let $\Dab$
be the set of diagrams $(\beta_{ij})_{{i = 0, \ldots, n},{j \in \hele}}$
such that $\beta_{ij}$ may be nonzero only in the range 
$0 \leq i \leq c$ and
$a_i \leq j \leq b_i$.
\end{definition}
 
We see that $\Dab$ is simply the $\rat$-vector space with basis
elements indexed by the pairs $(i,j)$ in the range above determined
by $\bfa$ and $\bfb$.
The 
diagram of Example \ref{BSTEksM}, displayed above in (\ref{ResLigBedia}), 
lives in the window $\Dab$ with  
$\bfa = (0,1,2)$ and $\bfb = (0,3,4)$ (or 
$\bfa$ any triple $\leq (0,1,2)$ and 
$\bfb$ any triple $\geq (0,3,4)$).

 If the module $M$ has codimension $c$, equivalently its Krull dimension 
is $n-c$, the depth of $M$ is $\leq n-c$. By the Auslander-Buchsbaum
theorem, \cite{Ei}, the length of the resolution is $l \geq c$.
To make things simple we assume that $l$ has its smallest possible 
value $l = c$ or equivalently that $M$ has depth equal to the dimension
$n-c$. This gives the class of Cohen-Macaulay (CM) modules. 
\begin{definition}
Let $\bfa$ and $\bfb$ be in $\zinc{c+1}$.
\begin{itemize}
\item $L(\bfa, \bfb)$ is the $\rat$-vector
subspace of the window $\Dab$ spanned by the Betti
diagrams of CM-modules of codimension $c$, whose 
Betti diagrams are in this window.
\item $B(\bfa,\bfb)$ is the set of non-negative rays spanned
by such Betti diagrams.
\end{itemize}
\end{definition}

\begin{lemma} $B(\bfa, \bfb)$ is a cone.
\end{lemma}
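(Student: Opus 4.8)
The plan is to show that $B(\bfa,\bfb)$ is closed under addition of rays and under scaling by positive rationals; since it is by definition a union of non-negative rays, these two closure properties are exactly what is needed to conclude it is a (convex) cone. Scaling is immediate: if $\beta$ is the Betti diagram of a CM-module $M$ of codimension $c$ lying in the window, and $q = p/r$ is a positive rational, then I would take the direct sum of $p$ copies of $M$; its Betti diagram is $p\beta$, which spans the same ray as $q\beta$, and it still lies in the window. So every positive rational multiple of a generating ray is again a generating ray.

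For closure under addition, suppose $\beta'$ and $\beta''$ are Betti diagrams of CM-modules $M'$ and $M''$, both of codimension $c$ and both with diagrams in the window $\Dab$. The natural move is to consider $M' \oplus M''$: its minimal free resolution is the direct sum of the two minimal resolutions, so $\beta_{ij}(M'\oplus M'') = \beta'_{ij} + \beta''_{ij}$, and the diagram $\beta' + \beta''$ again has support in the range $0 \le i \le c$, $a_i \le j \le b_i$, hence lies in $\Dab$. One must check that $M' \oplus M''$ is still CM of codimension $c$: its depth is $\min(\operatorname{depth} M', \operatorname{depth} M'') = \min(n-c, n-c) = n-c$, and its Krull dimension is likewise $n-c$, so it is Cohen-Macaulay of codimension $c$. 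Therefore $\beta' + \beta''$ spans a ray in $B(\bfa,\bfb)$, and more generally the ray spanned by $s\beta' + t\beta''$ for positive rationals $s,t$ is obtained by first rescaling (as above) and then taking a direct sum.

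Combining these observations: an arbitrary non-negative rational combination $\sum_k q_k \beta^{(k)}$ of generating diagrams, with each $q_k \in \QQ_{\ge 0}$, is realized by $\bigoplus_k (M^{(k)})^{\oplus m_k}$ for suitable positive integers $m_k$ chosen so that $m_k\beta^{(k)}$ is a rational multiple of $q_k\beta^{(k)}$, possibly after clearing a common denominator. This module is CM of codimension $c$ with diagram in the window, so the combination again lies in $B(\bfa,\bfb)$, which establishes convexity and closure under positive scaling — i.e. $B(\bfa,\bfb)$ is a cone.

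I do not expect any serious obstacle here; the statement is essentially bookkeeping. The only point that warrants a moment's care is the behaviour of depth and codimension under direct sums (so that the direct sum does not drop out of the CM class or change its codimension), and the observation that minimal free resolutions are additive over direct sums — but these are standard facts about graded modules over $S$, and the window condition is visibly preserved since the support of a sum of diagrams is contained in the union of the supports.
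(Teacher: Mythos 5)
Your proof is correct and matches the paper's argument: both reduce closure under positive rational combinations to the observation that $M_1^{c_1}\oplus M_2^{c_2}$ is CM of codimension $c$, has Betti diagram $c_1\beta(M_1)+c_2\beta(M_2)$, and stays in the window. You spell out the additivity of minimal resolutions and the behaviour of depth under direct sums a bit more explicitly, but the route is the same.
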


\begin{proof}
We must show that if $\beta_1$ and $\beta_2$ are in $B(\bfa, \bfb)$ then
$q_1 \beta_1 + q_2 \beta_2$ is in $B(\bfa, \bfb)$ for all positive
rational numbers $q_1$ and $q_2$. 

This is easily seen to be equivalent to the following: Let $M_1$ and
$M_2$ be CM-modules of codimension $c$ with Betti diagrams in 
$\Dab$. Show that $c_1 \beta(M_1) + c_2\beta(M_2)$ is 
in $B(\bfa, \bfb)$ for all natural numbers $c_1 $ and $c_2$. But this
linear combination is clearly the Betti diagram of the 
CM-module
$M_1^{c_1} \oplus M_2^{c_2}$ of codimension $c$. 
And clearly this linear combination is still in the window $\Dab$.
\end{proof}

Our main objective is to describe this cone.

\subsection{Herzog-K\"uhl equations}

Now given a resolution (\ref{BSTLigRes}) of a module $M$, 
there are natural relations its Betti numbers $\beta_{ij}$ must
fulfil. First of all if the codimension $c \geq 1$, then clearly the 
alternating sum of the ranks of the $F_i$ must be zero. I.e.
\[ \sum_{i,j} (-1)^i \beta_{ij} = 0. \]
When the codimension $c \geq 2$ we get more numerical restrictions. 
Since $M$ has dimension $n-c$, its Hilbert series is of the form
$h_M(t) = \frac{p(t)}{(1-t)^{n-c}}$, where $p(t)$ is some polynomial.
This may be computed as the alternating sum of the Hilbert series of
each of the terms in the resolution (\ref{BSTLigRes}):
\[ h_M(t) = \frac{\sum_j \beta_{0j} t^j}{(1-t)^n}
- \frac{\sum_j \beta_{1j} t^j}{(1-t)^n}
+ \cdots + (-1)^l  \frac{\sum_j \beta_{lj} t^j}{(1-t)^n}
. \]
Multiplying with $(1-t)^n$ we get
\[ (1-t)^c p(t) =  \sum_{i,j}  (-1)^i\beta_{ij}t^j. \]
Differentiating this successively and setting $t = 1$, gives the
equations
\begin{equation} \label{BSTLigHK} \sum_{i,j} (-1)^i j^p \beta_{ij} = 0, 
\quad p = 0, \ldots, c-1. 
\end{equation}
These equations are the {\it Herzog-K\"uhl equations} for the Betti diagram
$(\beta_{ij})$ of a module of codimension $c$.

We denote by $L^{HK}(\bfa, \bfb)$ the
$\rat$-linear subspace of diagrams in $\Dab$ fulfilling the Herzog-K\"uhl 
equations  (\ref{BSTLigHK}). 
Note that $L(\bfa, \bfb)$ is a subspace of $L^{HK}(\bfa, \bfb)$.
We shall show that these spaces are equal.


\subsection{Pure resolutions} \label{ResSubsecPure}
Now we shall consider a particular case of the resolution (\ref{BSTLigRes}).
Let $\bfd = (d_0, \ldots, d_l)$ be a strictly increasing sequence of
integers, a degree sequence. 
The resolution (\ref{BSTLigRes}) is {\it pure} if it
has the form 
\begin{equation*} 
S(-d_0)^{\beta_{0,d_0}} \vpil S(-d_1)^{\beta_{1,d_1}} \vpil \cdots
\vpil S(-d_l)^{\beta_{l,d_l}}. 
\end{equation*}

By a {\it pure diagram} (of type $\bfd$) we shall mean a diagram such that 
for each column $i$ there is only one nonzero entry $\beta_{i, d_i}$, and
the $d_i$ form an increasing sequence. We see that a pure resolution
gives a pure Betti diagram. 

When $M$ is CM of codimension $c$, the Herzog-K\"uhl equations give 
the following set of equations
\[ \left[ \begin{matrix} 1 & - 1 & \cdots & (-1)^{c} \\
               d_0 & -d_1 & \cdots & (-1)^{c}d_{c} \\
               \vdots & & & \vdots \\
               d_0^{c-1} & - d_1^{c-1} & \cdots & (-1)^{c}d_c^{c-1}
          \end{matrix} \right ] 
\left[ \begin{matrix} \beta_{0, d_0} \\ \beta_{1, d_1} \\ \vdots
                     \\ \beta_{c, d_c}  \end{matrix}
\right ] . 
\]

This is a $c \times (c+1)$ matrix of maximal rank. Hence there is only 
a one-dimensional $\rat$-vector space of solutions. The solutions may
be found by computing the maximal minors which are Vandermonde determinants
and we find
\[ \beta_{i, d_i} =  (-1)^i \cdot t \cdot \prod_{k \neq i} \frac{1}{(d_k - d_i)} \]
where $t \in \rat$. When $t > 0$ all these are positive. 
Let $\pi(\bfd)$ be the diagram which is the smallest integer solution
to the equations above.
As we shall see pure resolutions and pure diagrams play a central role
in the description of Betti diagrams up to rational multiple. 

\subsection{Linear combinations of pure diagrams}
The rays generated by the $\pi(\bfd)$ turn out to be exactly the 
extremal rays in the cone $B(\bfa, \bfb)$. 
Thus any Betti diagram is a positive linear combination of pure diagrams.
Let us see how this works
in an example.

\begin{example} \label{BSTEksDek} If the diagram of Example \ref{BSTEksM}
\[ \beta = \begin{matrix} 0 \\ 1 \\ 2 \end{matrix}
\left [ \begin{matrix} 1 & 0 & 0 \\ 
                          0 & 2 & 1 \\
                          0 & 1 & 1 
           \end{matrix}  \right ] \]
is a positive linear combination of pure diagrams $\pi(\bfd)$,
the only possibilities for these diagrams are
\[ \pi(0,2,3) = \left [ \begin{matrix} 1 & 0 & 0 \\
                0 & 3 & 2 \\
                0 & 0 & 0
                \end{matrix} \right ], \quad
  \pi(0,2,4) = \left [ \begin{matrix} 1 & 0 & 0 \\
                0 & 2 & 0 \\
                0 & 0 & 1
                \end{matrix} \right ], \quad
\pi(0,3,4) = \left [ \begin{matrix} 1 & 0 & 0 \\
                0 & 0 & 0 \\
                0 & 4 & 3
                \end{matrix} \right ]. \]
Note that by the natural partial order on degree sequences we have
\[ (0,2,3) < (0,2,4) < (0,3,4). \]
To find this linear combination we proceed as follows. Take the largest
positive multiple $c_1$ of $\pi(0,2,3)$ such that
$\beta - c_1 \pi(0,2,3)$ is still non-negative. We see that $c_1 = 1/2$ and
get 
\[ \beta_1 = \beta - \frac{1}{2} \pi(0,2,3) = \left[
   \begin{matrix} 1/2 & 0 & 0 \\
          0 & 1/2 & 0 \\
          0 & 1 & 1 
          \end{matrix} \right ] .  \]
Then take the largest possible multiple $c_2$ of $\pi(0,2,4)$ such that
$\beta_1 - c_2 \pi (0,2,4)$ is non-negative. We see that $c_2 = 1/4$ and
get 
\[ \beta_2 = \beta - \frac{1}{2} \pi(0,2,3) - \frac{1}{4} \pi(0,2,4)
= \left [ \begin{matrix} 1/4 & 0 & 0 \\
          0 & 0 & 0 \\
          0 & 1 & 3/4 
         \end{matrix}  \right ] . \]
Taking the largest multiple $c_3$ of $\pi(0,3,4)$ such that
$\beta_2 - c_3 \pi (0,3,4)$ is non-negative, we see that $c_3 = 1/4$ and 
the last expression becomes the zero diagram. Thus we get $\beta$ as a positive
rational combination of pure diagrams
\[ \beta = \frac{1}{2} \pi(0,2,3) + \frac{1}{4} \pi(0,2,4) + 
\frac{1}{4} \pi(0,3,4). \]
\end{example}

The basic part of Boij-S\"oderberg theory says that this procedure
will always work: It gives a {\it non-negative} linear combination of
pure diagrams.  We proceed to develop this in more detail.
With $\hele^{c+1}_\inc$ equipped with the natural partial order, we get
for $\bfa, \bfb \in \hele^{c+1}_\inc$ the interval $[\bfa, \bfb]_\inc$
consisting of all degree sequences $\bfd$ with $\bfa \leq \bfd \leq \bfb$. 
The diagrams $\pi(\bfd)$ where $\bfd \in [\bfa, \bfb]_\inc$ are the pure diagrams
in the window determined by $\bfa$ and $\bfb$.

\begin{example}
If $\bfa = (0,2,3)$ and $\bfb = (0,3,4)$, the vector space 
$\Dab$ consists of the diagrams which may be nonzero
in the positions marked by $*$ below.

\[ \left [ \begin{matrix} * & 0 & 0 \\
         0 & * & * \\
         0 & * & * 
   \end{matrix} \right ],  \]
and so is five-dimensional.
The Herzog-K\"uhl equations for the diagrams ($c = 2$) are the following two
equations
\begin{eqnarray*}
\beta_{0,0} - (\beta_{1,2} + \beta_{1,3}) + (\beta_{2,3} + \beta_{2,4}) 
&  = & 0 \\
0 \cdot \beta_{0,0} - (2\beta_{1,2} + 3\beta_{1,3}) + 
(3\beta_{2,3} + 4\beta_{2,4})  &  = & 0 
\end{eqnarray*}
These are linearly independent and so $L^{HK}(\bfa, \bfb)$ will be 
three-dimensional. On the other hand the diagrams $\pi(0,2,3), \pi(0,2,4)$
and $\pi(0,3,4)$ are clearly linearly independent in this vector
space and so they form a basis for it. This is a general phenomenon.
\end{example}

The linear space $L^{HK}(\bfa, \bfb)$ (and as will turn out $L(\bfa, \bfb)$) 
may be described as follows. 

\begin{proposition} \label{BSTProLin} Given any maximal chain 
\[ \bfa = \bfd^1 < \bfd^2 < \cdots < \bfd^r = \bfb \]
in $[\bfa,\bfb]_\inc$. The associated pure diagrams
\[ \pi(\bfd^1), \pi(\bfd^2), \ldots, \pi (\bfd^r) \]
form a basis for $L^{HK}(\bfa, \bfb)$.  The length of such
a chain, and hence the dimension of the latter vector space is
$r = 1 + \sum (b_i - a_i)$. 
\end{proposition}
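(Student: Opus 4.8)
The plan is to establish three things: (i) the pure diagrams $\pi(\bfd^1), \ldots, \pi(\bfd^r)$ lie in $L^{HK}(\bfa,\bfb)$; (ii) they are linearly independent; (iii) they span, and the count of the chain length matches $\dim L^{HK}(\bfa,\bfb)$. For (i), each $\pi(\bfd^j)$ is a pure diagram of type $\bfd^j$ with $\bfd^j \in [\bfa,\bfb]_\inc$, hence supported in the window $\Dab$, and by the computation in Subsection~\ref{ResSubsecPure} its entries solve the Herzog-K\"uhl equations (\ref{BSTLigHK}); so $\pi(\bfd^j) \in L^{HK}(\bfa,\bfb)$.

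For linear independence I would order the chain and exploit the ``staircase'' structure of the supports. Since $\bfd^1 < \bfd^2 < \cdots < \bfd^r$ is a \emph{maximal} chain in $[\bfa,\bfb]_\inc$, consecutive terms differ by raising exactly one coordinate by exactly $1$; in particular the sequence of degree sequences is strictly increasing in the partial order, so for $j < k$ there is some index $i$ with $d_i^j < d_i^k$. The key observation is that the entry of $\pi(\bfd^j)$ in column $i$ sits in row $d_i^j$, and one can pick, for each $j$, a ``distinguishing position'' $(i_j, d_{i_j}^j)$ such that $\pi(\bfd^j)$ is nonzero there but $\pi(\bfd^k)$ vanishes there for all $k > j$ (because $\bfd^k$ is strictly larger, so its $i_j$-th coordinate has moved up past $d_{i_j}^j$ — and all later ones in the chain only move coordinates up further). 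This gives the coefficient matrix, in the basis of monomial diagrams, a triangular shape with nonzero diagonal, forcing independence. I would phrase this cleanly by observing that $a_{i}=d_i^1 \le d_i^j \le d_i^r = b_i$ and that along the chain each coordinate $d_i^\bullet$ is weakly increasing and takes each value in $[a_i,b_i]$.

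For the dimension count: the chain has length $r - 1$ edges, each edge increments exactly one coordinate by $1$, and to pass from $\bfa$ to $\bfb$ we must increment coordinate $i$ a total of $b_i - a_i$ times; hence $r - 1 = \sum_{i=0}^{c}(b_i - a_i)$, i.e. $r = 1 + \sum(b_i-a_i)$. It remains to see $\dim L^{HK}(\bfa,\bfb) = r$. The window $\Dab$ has dimension $\sum_{i=0}^c (b_i - a_i + 1) = (c+1) + \sum(b_i-a_i) = c + r$, and the Herzog-K\"uhl system (\ref{BSTLigHK}) imposes $c$ equations; so it suffices to check these $c$ equations are linearly independent as functionals on $\Dab$. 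This is the analogue of the Vandermonde argument already used for pure diagrams: restrict the functionals to the $c+1$ coordinate directions $\beta_{i,d_i}$ coming from any single degree sequence $\bfd \in [\bfa,\bfb]_\inc$ (which exists, e.g. $\bfd=\bfa$) to get the $c\times(c+1)$ Vandermonde-type matrix of Subsection~\ref{ResSubsecPure}, which has rank $c$; hence the $c$ functionals are independent on $\Dab$, and $\dim L^{HK}(\bfa,\bfb) = (c+r) - c = r$. Combined with independence and Step (i), the $r$ pure diagrams form a basis.

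The main obstacle is making the triangularity argument in the linear-independence step precise — specifically, choosing the distinguishing positions $(i_j, d_{i_j}^j)$ coherently along the chain and checking that later diagrams genuinely vanish there. Everything else (membership, Vandermonde rank, the edge-counting for $r$) is routine given what is already in the excerpt. One clean way to handle the obstacle: induct on $r$, peeling off $\pi(\bfa)=\pi(\bfd^1)$, which has its column-$i$ entry in the lowest possible row $a_i=d_i^1$, and noting every other $\pi(\bfd^j)$ with $j\ge 2$ has \emph{some} column whose entry has strictly risen above $a_i$, so a single coordinate of $\pi(\bfd^1)$'s support is not shared by any of the others — giving the first pivot — then apply the inductive hypothesis to the sub-chain $\bfd^2 < \cdots < \bfd^r$ inside $[\bfd^2,\bfb]_\inc$.
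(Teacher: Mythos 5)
Your proof is correct, and the independence step is essentially identical to the paper's (peel off $\pi(\bfd^1)$ using the position $(i,a_i)$ where $\bfd^1$ and $\bfd^2$ differ — at which all later $\pi(\bfd^j)$ vanish — and induct on the subchain). Where you genuinely diverge is the spanning step. The paper proves spanning directly and constructively by the same induction: given $\beta\in L^{HK}(\bfa,\bfb)$, subtract the multiple $c_1\pi(\bfd^1)$ that kills the entry at $(i,a_i)$, observe that the result lies in $L^{HK}(\bfd^2,\bfb)$ with $\bfd^2<\cdots<\bfd^r$ a maximal chain there, and recurse; this peeling both proves spanning and \emph{is} the decomposition algorithm used in the rest of the theory. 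You instead compute $\dim\Dab = (c+1)+\sum(b_i-a_i) = c+r$, show the $c$ Herzog-K\"uhl functionals are independent on $\Dab$ by restricting to the coordinates $\beta_{i,a_i}$ and invoking the $c\times(c+1)$ Vandermonde rank, deduce $\dim L^{HK}(\bfa,\bfb)=r$, and conclude that $r$ independent vectors must span. Both arguments are valid; yours is shorter once independence is in hand, but less constructive, and it requires the additional (easy but non-trivial) check that the HK functionals are independent on the window, which the paper's inductive route sidesteps. One cosmetic point: in your independence sketch the phrase ``every other $\pi(\bfd^j)$ with $j\ge 2$ has \emph{some} column whose entry has strictly risen above $a_i$'' does not by itself give a common pivot position; the correct justification (which you do reach in the concluding clause) is that the single index $i$ at which $\bfd^1$ and $\bfd^2$ differ already satisfies $d^j_i\ge d^2_i>a_i$ for \emph{all} $j\ge 2$, since $\bfd^j\ge\bfd^2$.
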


\begin{proof}
Let $\beta$ be a solution of the HK-equations contained in the
window $\Dab$. The vectors $\bfd^1$ and $\bfd^2$ differ in
one coordinate, suppose it is the $i$'th coordinate, 
so $\bfd^1 = (\cdots, d^1_{i}, \cdots)$ and 
$\bfd^2 = (\cdots, d^1_{i} + 1, \cdots)$.
 Let $c_1$ be such that
$\beta_1 = \beta - c_1 \pi(\bfd^1)$ is zero in position $(i,d^1_{i})$. 
Then $\beta_1$ is contained in the window ${\mathbb D}(\bfd^2, \bfb)$ and
$\bfd^2, \ldots, \bfd^r$ is a maximal chain in $[\bfd^2, \bfb]_\inc$.
We may proceed by induction and in the end get
$\beta_{r-1}$ contained in $[\bfb, \bfb]_\inc$. Then $\beta_{r-1}$ is pure
and so is a multiple of $\pi(\bfd^r)$. In conclusion 
\[ \beta = \sum_{i = 1}^r c_i \pi(\bfd^i). \]
To see that the $\pi(\bfd^i)$ are linearly independent, note that 
$\pi(\bfd^1)$ is not a linear combination of the $\pi(\bfd^i)$ for 
$i \geq 2$ since $\pi(\bfd^1)$ is nonzero in position $(i,d^1_{i})$
while the $\pi(\bfd^i)$ for $i \geq 2$ are zero in this position.
Hence a dependency must involve only $\pi(\bfd^i)$ for $i \geq 2$.
But then we may proceed by induction.
\end{proof}

\subsection{The Boij-S\"oderberg conjectures}
The first part of the original Boij-S\"oderberg conjectures states
the following.

\begin{theorem} \label{ResTheBS1} 
For every degree sequence $\bfd$, a strictly increasing sequence of integers
$(d_0 , \ldots, d_c)$, there exists a Cohen-Macaulay module
$M$ of codimension $c$ with pure resolution of type $\bfd$. 
\end{theorem}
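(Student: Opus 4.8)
The plan is to exhibit, for each degree sequence $\bfd = (d_0, \ldots, d_c)$, an explicit Cohen-Macaulay module of codimension $c$ whose minimal free resolution is pure of type $\bfd$. Because the characteristic-zero constructions are the most transparent, I would first reduce to the case where the field $\kr$ has characteristic zero, and build a $\GL(n)$-equivariant resolution. The key idea is to work not with $S = \kr[x_1, \ldots, x_n]$ alone but with a Koszul-type complex whose terms are Schur functors applied to the defining vector space $V = \langle x_1, \ldots, x_n \rangle$, twisted appropriately. Concretely, set $e_k = d_k - d_{k-1} \geq 1$ and look for a complex of graded free modules
\[
F_\bullet : \quad S \te S_{\lambda^0} V (-d_0) \vpil S \te S_{\lambda^1} V(-d_1) \vpil \cdots \vpil S \te S_{\lambda^c} V(-d_c),
\]
where the partitions $\lambda^i$ differ successively by a horizontal strip dictated by the gaps $e_k$, so that the differentials are $\GL(V)$-equivariant maps built from the comultiplication $S_{\lambda^i} V \to S_{\lambda^{i+1}} V \te \Symm^{e_{i+1}} V$ followed by multiplication into $S$. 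The ranks $\beta_{i,d_i} = \dim S_{\lambda^i} V$ will then automatically satisfy the Herzog-K\"uhl equations, since a complex with these Hilbert-series contributions that is exact except at the left end forces them.

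The main steps, in order, are: (1) write down the partitions $\lambda^i$ explicitly — the natural choice is to stack the blocks so that $\lambda^i$ has $c - i$ columns of heights recording the partial sums of the $e_k$'s, arranged in a staircase, which via the Pieri rule guarantees the only equivariant map $S_{\lambda^i}V \te \Symm^{e_{i+1}} V \to S_{\lambda^{i+1}} V$ needed is nonzero and essentially unique; (2) verify $d^2 = 0$, which follows from equivariance plus a Schur-functor multiplicity count (the relevant Hom-space of $\GL(V)$-representations is at most one-dimensional, so the composite, being equivariant and of the wrong shape, must vanish); (3) prove exactness of $F_\bullet$ in homological degrees $\geq 1$; (4) identify the cokernel $M = \coker(F_1 \to F_0)$, check it has codimension exactly $c$ — equivalently that $F_\bullet$ has the expected depth-sensitivity — and conclude via the Auslander-Buchsbaum formula that $M$ is Cohen-Macaulay with $F_\bullet$ as its minimal resolution (minimality is clear since all entries of the differentials lie in the maximal ideal $\mm$, as each twist strictly increases).

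The hard part is step (3), exactness. The clean way to get it is to specialize: restrict the equivariant complex along a generic map, or better, localize at each prime and use a degeneration/Koszul argument, reducing exactness to the classical Koszul or Eagon-Northcott complex in a toric or monomial degeneration. An alternative, which is the route actually taken in \cite{EFW}, is to prove exactness by a direct computation with the Schur complex / hook-content machinery, using the fact that the complex $F_\bullet$ is the pushforward of a Koszul complex on a flag bundle (a "geometric technique" computation à la Weyman), where exactness of the Koszul complex upstairs and a cohomology vanishing for the relevant line bundles on the flag variety give acyclicity downstairs. This cohomology vanishing — a Bott-type statement — is where characteristic zero is genuinely used; the characteristic-free version of Eisenbud-Schreyer (Subsection \ref{SubsekExiChfree}) replaces it with a different, more combinatorial construction. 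I would present the equivariant argument as the main proof and remark that \cite{ES} removes the characteristic hypothesis.
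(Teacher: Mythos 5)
Your proposal follows essentially the same route as the paper, which establishes Theorem \ref{ResTheBS1} by pointing to the explicit constructions of Section \ref{ExiSek}; your complex is precisely the $GL(V)$-equivariant resolution $E(\bfe)$ of \cite{EFW} described in Subsection \ref{SubsekExiEkvi}, with the same staircase partitions $\alpha(\bfe,i)$ built from the gaps $e_k = d_k - d_{k-1}$, the same Pieri-rule argument giving uniqueness of the differentials and $d^2=0$, and the same fallback to \cite{ES} for positive characteristic. Two small cautions: first, there is no genuine \emph{reduction} to characteristic zero --- the statement over a field of characteristic $p$ really does require the separate construction of \cite{ES} or \cite{SW}, not a transfer from the $\GL(V)$-equivariant one; and second, your description of the EFW exactness proof as a pushforward of a Koszul complex from a flag bundle together with a Bott-type vanishing is not what \cite{EFW} actually does (the survey describes their proof as \emph{inductive}, degenerating the complex rather than invoking the geometric technique) --- the geometric/Bott-theoretic flavour you describe is closer in spirit to the later arguments of Sam--Weyman \cite{SW}, who indeed remark that their approach is ``more direct and explicit than the inductive arguments given in \cite{EFW}''.
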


We shall in Section \ref{ExiSek} give an overview of the constructions of such
resolutions, making the conjecture a theorem.

\begin{corollary} The linear space $L(\bfa, \bfb)$ is equal to
$L^{HK}(\bfa, \bfb)$. 
\end{corollary}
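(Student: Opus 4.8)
The plan is to deduce this from Theorem \ref{ResTheBS1} together with Proposition \ref{BSTProLin}. We already know the inclusion $L(\bfa,\bfb) \subseteq L^{HK}(\bfa,\bfb)$ (noted just after the Herzog--K\"uhl equations), so only the reverse inclusion needs to be established. The strategy is to exhibit, inside $L(\bfa,\bfb)$, a spanning set for $L^{HK}(\bfa,\bfb)$, namely the pure diagrams attached to a maximal chain.

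First I would fix a maximal chain $\bfa = \bfd^1 < \bfd^2 < \cdots < \bfd^r = \bfb$ in $[\bfa,\bfb]_\inc$. By Proposition \ref{BSTProLin} the diagrams $\pi(\bfd^1), \ldots, \pi(\bfd^r)$ form a basis of $L^{HK}(\bfa,\bfb)$, so it is enough to show that each $\pi(\bfd^i)$ lies in $L(\bfa,\bfb)$. Fix $i$ and set $\bfd = \bfd^i = (d_0, \ldots, d_c)$; since $\bfa \leq \bfd \leq \bfb$, Theorem \ref{ResTheBS1} provides a Cohen--Macaulay module $M$ of codimension $c$ with a pure resolution of type $\bfd$. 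Its Betti diagram $\beta(M)$ is then a pure diagram of type $\bfd$, whose nonzero entries $\beta_{i,d_i}(M)$ satisfy the Herzog--K\"uhl equations; as computed in Subsection \ref{ResSubsecPure} the solution space of those equations is one-dimensional, and since $\beta(M)$ is nonzero with non-negative entries it must be a positive rational multiple of $\pi(\bfd)$. Moreover, because $a_j \leq d_j \leq b_j$ for all $j$, the diagram $\beta(M)$ sits in the window $\Dab$, so by definition $\beta(M) \in L(\bfa,\bfb)$. As $L(\bfa,\bfb)$ is a $\rat$-vector subspace, it contains the scalar multiple $\pi(\bfd)$ as well.

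Running this over all $i$ shows $\pi(\bfd^1), \ldots, \pi(\bfd^r) \in L(\bfa,\bfb)$, hence $L^{HK}(\bfa,\bfb) = \operatorname{span}_\rat\{\pi(\bfd^i)\} \subseteq L(\bfa,\bfb)$, and combined with the known reverse inclusion we get equality. There is essentially no hard step once Theorem \ref{ResTheBS1} is granted; the only point requiring care is checking that the constructed module's Betti diagram genuinely lands in the prescribed window $\Dab$, which is automatic here since $\bfd$ was chosen in $[\bfa,\bfb]_\inc$. The real content of the corollary is therefore entirely the existence statement of Theorem \ref{ResTheBS1}, whose proof is deferred to Section \ref{ExiSek}.
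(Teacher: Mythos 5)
Your argument is correct and is essentially the paper's own proof, just spelled out in more detail: the paper's one-line proof also rests on the fact (Proposition \ref{BSTProLin}) that the $\pi(\bfd^i)$ along a maximal chain span $L^{HK}(\bfa,\bfb)$, combined with Theorem \ref{ResTheBS1} realizing each $\pi(\bfd)$ up to scalar as the Betti diagram of a Cohen--Macaulay module. Your added checks (that the solution space of the Herzog--K\"uhl equations for a pure type is one-dimensional, and that the constructed diagram lands in the window $\Dab$) are the right details to make explicit, but they do not constitute a different route.
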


\begin{proof} The diagram $\pi(\bfd)$ may now be realized, up to multiplication
by a scalar, as the Betti diagram of a Cohen-Macaulay module.
\end{proof}

The second part of the Boij-S\"oderberg conjectures says the following.

\begin{theorem} \label{ResTheBS2} Let $M$ be a Cohen-Macaulay module of 
codimension $c$
with Betti diagram $\beta(M)$ in $\Dab$. There is
a unique chain 
\[ \bfd^1 < \bfd^2 < \cdots < \bfd^r \]
in $[\bfa, \bfb]_\inc$ such that $\beta(M)$ is uniquely a linear combination
\[ c_1 \pi(\bfd^1) + c_2 \pi(\bfd^2) + \cdots + c_r \pi(\bfd^r) \]
where the $c_i$ are positive rational numbers.
\end{theorem}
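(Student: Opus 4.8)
The plan is to prove this by the greedy algorithm illustrated in Example \ref{BSTEksDek}, combined with the facet-inequality machinery that (as announced in the introduction) shows $B(\bfa,\bfb)$ is cut out by the exterior facets of the simplicial fan $\Sigma$. The overall strategy splits cleanly into an \emph{existence} half and a \emph{uniqueness} half, and these two halves are handled by opposite inclusions $B(\bfa,\bfb)\subseteq\Sigma$ and $\Sigma\subseteq B(\bfa,\bfb)$.

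First I would prove existence of the decomposition by running the greedy algorithm on the Betti diagram $\beta=\beta(M)$. Pick the minimal element $\bfd^1$ of $[\bfa,\bfb]_\inc$ (under the partial order) that is the type of some pure diagram $\pi(\bfd)$ appearing with positive coefficient when $\beta$ is expanded in the basis of Proposition \ref{BSTProLin}; more operationally, among all pure types $\bfd$ with $\pi(\bfd)$ ``compatible'' with the support of $\beta$, take one whose first homological degree sequence is as small as possible. Subtract the largest multiple $c_1\pi(\bfd^1)$ keeping the difference $\beta_1=\beta-c_1\pi(\bfd^1)$ nonnegative. The key point — and this is where I expect the real work to lie — is that $\beta_1$ again lies in the cone $B$, i.e. is a nonnegative rational combination of Betti diagrams of Cohen-Macaulay modules of codimension $c$. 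This is exactly the content of the statement that $B$ equals the realization of $\Sigma$: one checks that $\beta_1$ still satisfies every facet inequality of $\Sigma$. Concretely, $\beta_1$ still satisfies the Herzog-K\"uhl equations (both $\beta$ and $\pi(\bfd^1)$ do), so $\beta_1\in L^{HK}(\bfa',\bfb)$ for the smaller window $\bfa'=\bfd^2$; and the nonnegativity plus the facet inequalities — evaluated via the pairing with cohomology tables of supernatural bundles described in Section \ref{SekFacet} — force $\beta_1$ to lie in $B(\bfd^2,\bfb)$. By the window-shrinking argument of Proposition \ref{BSTProLin}, after at most $r-1\le\sum(b_i-a_i)$ steps the process terminates with the zero diagram, yielding $\beta=\sum_{i=1}^r c_i\pi(\bfd^i)$ with all $c_i>0$ and $\bfd^1<\bfd^2<\cdots<\bfd^r$ a chain in $[\bfa,\bfb]_\inc$; realizability of each $\pi(\bfd^i)$ as a genuine Betti diagram is Theorem \ref{ResTheBS1}.

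For uniqueness I would argue that the chain $\bfd^1<\cdots<\bfd^r$ and the coefficients $c_i$ are forced. Suppose $\beta=\sum_i c_i\pi(\bfd^i)=\sum_j c_j'\pi(\bfe^j)$ are two expressions as positive combinations of pure diagrams along chains. Using the partial order, let $\bfd^1$ and $\bfe^1$ be the respective minimal types. If $\bfd^1\ne\bfe^1$, say $\bfd^1\not\ge\bfe^1$, then there is a homological slot $i$ and a degree $d^1_i$ in which $\pi(\bfd^1)$ is nonzero but every $\pi(\bfd^k)$ with $k\ge2$ vanishes (the argument from the linear-independence part of Proposition \ref{BSTProLin}): comparing the $(i,d^1_i)$-entries on the two sides, and noting that $\beta$ being nonnegative together with the chain structure pins the value, one derives a contradiction with minimality of $\bfe^1$. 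Hence $\bfd^1=\bfe^1$; the value of $\beta$ in the distinguished ``bottom'' position of $\pi(\bfd^1)$ then forces $c_1=c_1'$, and subtracting $c_1\pi(\bfd^1)$ we recurse on $\beta-c_1\pi(\bfd^1)$ in the window $[\bfd^2,\bfb]_\inc$. Thus both the chain and the coefficients coincide.

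The main obstacle is the claimed step in the existence half: that the intermediate diagram $\beta_1$ still lies in $B$, not merely in $L^{HK}$. The subtlety is that nonnegativity and the Herzog-K\"uhl equations alone do not characterize $B$ — that $\Sigma\subseteq B$ uses the existence of pure resolutions (Theorem \ref{ResTheBS1}), while $B\subseteq\Sigma$ uses the positivity of the Betti-diagram/cohomology-table pairing against supernatural bundles. So the honest version of this proof must invoke both constructions of Section \ref{ExiSek} and the facet-equation computations of Section \ref{SekFacet}: the greedy algorithm \emph{describes} the decomposition, but its correctness rests on the two-sided identification $B=\Sigma$. Everything else — termination, the bookkeeping of windows, and the uniqueness extraction — is the routine combinatorics of the partial order on degree sequences already set up in Proposition \ref{BSTProLin}.
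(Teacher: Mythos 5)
Your proof has the right ingredients --- the facet equations, the pairing with supernatural bundles, the nonnegativity from Theorem \ref{FacetThePairing}, and the existence of pure resolutions --- but the existence half has a genuine logical gap as written. The claim that after the greedy subtraction the remainder $\beta_1 = \beta - c_1\pi(\bfd^1)$ ``still satisfies every facet inequality of $\Sigma$'' cannot be obtained the way you suggest. Theorem \ref{FacetThePairing} gives nonnegativity of the pairing on \emph{Betti diagrams of modules}, and while both $\beta$ and (by Theorem \ref{ResTheBS1}) some multiple of $\pi(\bfd^1)$ are such, these are two lower bounds that point the wrong way for their \emph{difference}: from $\langle\beta,\gamma\rangle\ge 0$ and $\langle\pi(\bfd^1),\gamma\rangle\ge 0$ you cannot conclude $\langle\beta_1,\gamma\rangle\ge 0$. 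So the assertion that the greedy step lands you back in $B(\bfd^2,\bfb)$ is unproved, and indeed is only true \emph{after} the theorem is established.

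The paper's proof never touches the greedy algorithm. It shows directly that every Betti diagram $\beta(M)$ in the window lies in the positive half-space of every exterior facet of $\Sigma(\bfa,\bfb)$: for type-3 facets this is exactly Theorem \ref{FacetThePairing} applied to the supernatural bundle of Proposition \ref{FacProUpgamma}; for type-1 and type-2 facets the facet equation is $\beta_{ij}=0$ and the inequality is just nonnegativity of a Betti number. Hence $B(\bfa,\bfb)$ sits inside the intersection of all those half-spaces, and the intersection of the half-spaces cut out by the exterior facets of a simplicial fan equals the realization of that fan --- a general polyhedral fact. That gives $B\subseteq|\Sigma|$ in one stroke; uniqueness of the chain then comes from the simplicial-fan structure (any two maximal cones meet along a common face), and the greedy algorithm of Subsection \ref{ResSubsecAlg} is a \emph{corollary} of the theorem, not a step in its proof. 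Your own concluding remark that the greedy step's correctness ``rests on the two-sided identification $B=\Sigma$'' is the right diagnosis, but it concedes the circularity rather than removing it. The uniqueness half of your argument, by contrast, is sound and mirrors the induction underlying Proposition \ref{BSTProLin}.
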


\begin{remark}
When $M$ is any graded module of codimension $\geq c$, 
the same essentially holds true, but one must allow degree sequences
$\bfd^i$ in $\hele_{\deg}^p$ where $p$ ranges over $c+1, \ldots, n+1$. 
See Subsection \ref{SubsekFurNC}.
\end{remark}

\begin{remark}
Combining this with Theorem \ref{ResTheBS1}  
we see that there are modules $M_i$
with pure resolution of type $\bfd^i$ such that for suitable multiples $p$
and $p_i$ 
then $M^p$ and $\oplus_i M_i^{p_i}$ have the same Betti diagram.
\end{remark}

\subsection{Algorithmic interpretation} \label{ResSubsecAlg}
As a consequence of Theorem \ref{ResTheBS2} we get a simple
algorithm to find this unique decomposition, which is the way we
did it in Example \ref{BSTEksDek}. This algorithm, with interesting
consequences, is presented in \cite[Section 1]{ES}. 
For a diagram $\beta$, for each $i$
let $d_i$ be the minimal $j$ such that $\beta_{ij}$ is nonzero.
This gives a sequence $\ubfd(\beta) = (d_0, d_1, \ldots, d_c)$,
the {\it lower bound} of $\beta$. 

\begin{example} Below the nonzero positions of $\beta$ is indicated by 
$*$'s. 

\begin{equation*}
\begin{matrix} 0 \\ 1 \\ 2 \\ 3 
\end{matrix}
\left [ \begin{matrix}
* & * & 0 & 0 & 0 \\
* & * & * & 0 & 0 \\
0 & * & * & * & * \\
0 & 0 & 0 & * & * 
\end{matrix} \right ]  
\end{equation*}

 Then $\ubfd(\beta) = (0,1,3,5,6)$. 
\end{example}

There is a pure Betti diagram $\pi(\ubfd(\beta))$ and let $c(\beta) > 0$ be 
the maximal
number such that $\beta^\prime = \beta - 
c(\beta) \pi(\ubfd(\beta))$ is nonnegative.

Let $M$ be a Cohen-Macaulay module. 
The algorithm is now as follows.

\begin{itemize}
\item[1.] Let $\beta = \beta(M)$ and $i = 1$.
\item[2.] Compute $\ubfd^i := \ubfd(\beta)$ and $c^i := c(\beta)$. 
Then $\ubfd^i$ will be a strictly increasing sequence.
Let $\beta := \beta - c^i \pi (\ubfd^i)$.
\item[3.] If $\beta$ is nonzero let $i := i+1$ and continue with Step 2.
Otherwise stop.
\end{itemize}

The output will then be the unique decomposition
\[\beta(M) = c^1 \pi(\ubfd^1) + c^2 \pi(\ubfd^2) + \cdots + 
c^r \pi(\ubfd^r). \]

\subsection{Geometric interpretation}
Since for any chain 
$D : \bfd^1 < \bfd^2 < \cdots < \bfd^r$ in $[\bfa, \bfb]_\inc$
the Betti diagrams $\pi(\bfd^1), \ldots, \pi(\bfd^r)$ are linearly 
independent diagrams in $\Dab$, their positive rational
linear combinations give a simplicial cone $\sigma(D)$ in $\Dab$,
which actually is in the subspace $L^{HK}(\bfa, \bfb)$. 
Two such cones will intersect along another such
cone, which is the content of the following.

\begin{proposition}
The set of simplicial cones $\sigma(D)$ 
where $D$ ranges over all chains $\bfd^1 < \cdots < \bfd^r$ in 
$[\bfa, \bfb]_\inc$ form a simplicial fan, which we denote as 
$\Sigab$. 
\end{proposition}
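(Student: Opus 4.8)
The plan is to show three things: (1) each $\sigma(D)$ is a simplicial cone; (2) every face of a $\sigma(D)$ is again some $\sigma(D')$; and (3) the intersection of any two cones $\sigma(D_1)$ and $\sigma(D_2)$ is a common face of both. Point (1) is already essentially given: the $\pi(\bfd^i)$ for $\bfd^i$ in a chain are linearly independent by Proposition \ref{BSTProLin} (the linear independence argument there applies to any chain, not just maximal ones), so their positive rational combinations form a simplicial cone inside $L^{HK}(\bfa,\bfb)$. Point (2) is immediate from the definition of simplicial cone: a face of $\sigma(D)$ is the cone on a subchain $D' \subseteq D$, and since $[\bfa,\bfb]_\inc$ is closed under passing to subsets of a chain, $D'$ is again a chain in $[\bfa,\bfb]_\inc$, so the face is $\sigma(D')$.

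The substance is point (3). First I would reduce to the case where $D_1$ and $D_2$ are both maximal chains: any chain is contained in a maximal one, and if we prove the fan property for the maximal cones, the general case follows since $\sigma(D) = \sigma(D_1^{\max}) \cap (\text{coordinate subspace where certain } \beta_{i,j} \text{ vanish})$, which is a face. So assume $D_1, D_2$ are maximal chains; each gives a basis of $L^{HK}(\bfa,\bfb)$ by Proposition \ref{BSTProLin}. Take $\beta \in \sigma(D_1) \cap \sigma(D_2)$, so $\beta$ is a nonnegative combination of $\pi(\bfd^i)$, $\bfd^i \in D_1$, and also of $\pi(\bfe^j)$, $\bfe^j \in D_2$. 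The key claim is that the set of $\bfd^i \in D_1$ with strictly positive coefficient coincides with the set obtained from the decomposition algorithm of Subsection \ref{ResSubsecAlg} applied to $\beta$, and likewise for $D_2$; hence both sets of "active" degree sequences equal the canonical one $\{\ubfd^1 < \cdots < \ubfd^r\}$ produced by the algorithm, and in particular this common set is a chain $D_0$ lying in both $D_1$ and $D_2$. Once that is established, $\beta$ lies in the face $\sigma(D_0)$ of both $\sigma(D_1)$ and $\sigma(D_2)$, and conversely $\sigma(D_0) \subseteq \sigma(D_1) \cap \sigma(D_2)$ trivially, giving $\sigma(D_1) \cap \sigma(D_2) = \sigma(D_0)$.

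To prove that claim, I would argue directly that the greedy decomposition along a chain is forced. Given a maximal chain $\bfa = \bfd^1 < \cdots < \bfd^r = \bfb$ and a nonnegative $\beta = \sum c_i \pi(\bfd^i) \in \sigma(D_1)$, let $k$ be the first index with $c_k > 0$. Then $\ubfd(\beta) = \bfd^k$: in every column $j$, the diagrams $\pi(\bfd^i)$ for $i > k$ are supported in rows strictly larger than or equal to the $\bfd^k$-row (because $\bfd^i > \bfd^k$ in the partial order, so each coordinate is $\geq$, and the positivity of the Herzog--K\"uhl solution forces the smallest nonzero entry of $\pi(\bfd^k)$ in each column to sit at height $d^k_i$), so the minimal nonzero row of $\beta$ in column $i$ is exactly $d^k_i$. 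Moreover $c_k$ is the largest scalar with $\beta - c_k\pi(\bfd^k) \geq 0$, since decreasing it below would be fine but we must check no larger value works — and indeed in the column where $\pi(\bfd^k)$ and the remaining $\pi(\bfd^i)$, $i>k$, differ minimally, the entry of $\beta$ equals $c_k\pi(\bfd^k)_{k,d^k_k}$ exactly because the higher terms do not contribute there. So $c_k = c(\beta)$ and $\bfd^k = \ubfd(\beta)$, the first step of the algorithm; induction on $r$ finishes it. \emph{The main obstacle} is exactly this bookkeeping: verifying that along a chain the first active pure diagram is detected by the lower-bound/greedy rule and that its coefficient is maximal, which rests on the fact that consecutive elements of a maximal chain differ in a single coordinate by $1$ and on the sign pattern of the Vandermonde solution $\beta_{i,d_i} = (-1)^i t \prod_{k\neq i}(d_k-d_i)^{-1}$ from Subsection \ref{ResSubsecPure}; once that is in hand, uniqueness of the algorithm's output pins down $D_0$ and the fan axiom follows.
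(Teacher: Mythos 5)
Your proof is correct and rests on the same key observation as the paper: given a nonnegative decomposition $\beta = \sum c_i\pi(\bfd^i)$ along a chain, the lower bound $\ubfd(\beta)$ is forced to be the smallest active $\bfd^k$ and its coefficient is pinned down, so the active degree sequences are determined by $\beta$ alone. The paper runs this as a direct simultaneous induction on $|D|+|E|$ (subtracting $\min(c_1,c_1')\pi(\bfd^1)$ at each step) rather than routing through the canonicity of the greedy algorithm, but the engine is identical; note only that your reduction to maximal chains and the appeal to consecutive elements differing by exactly one are not actually needed—the argument that $c_k = c(\beta)$ requires only some column $j$ with $d^k_j < d^{k+1}_j$, which any strict inequality $\bfd^k < \bfd^{k+1}$ in a chain already provides.
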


\begin{proof}
Let $D$ be a chain like above and $E$ another chain $\bfe^1 < \cdots < \bfe^s$
in $[\bfa, \bfb]_\inc$. We shall show that $\sigma(D)$ and $\sigma(E)$ intersect
in $\sigma (D \cap E)$. So consider 
\[ \beta = \sum c_i \pi(\bfd^i) = \sum c_i^\prime \pi(\bfe^i) \]
in the intersection. By omitting elements in the chain we may assume
all $c_i$ and $c_i^\prime$ positive. Then the lower bound of $\beta$ which
we denoted $\ubfd(\beta)$, will be $\bfd^1$. But it will also be 
$\bfe^1$, and so $\bfe^1 = \bfd^1$. Assume say that $c_1 \leq c_1^\prime$.
Let $\beta^\prime = \beta- c_1 \pi(\bfd^1)$. Then $\beta^\prime$
is in $\sigma(D \backslash \{ \bfd^1 \})$ and in $\sigma(E)$. 
By induction on the sum of the cardinalities of $D$ and $E$, we
get that $\beta^\prime$ is in $\sigma (D \cap E \backslash \{\bfd^1 \})$
and so $\beta$ is in $\sigma ( D \cap E)$. 
\end{proof}

We now get the following description of the positive cone $B(\bfa, \bfb)$.

\begin{theorem} \label{ResTheGeo}

\begin{itemize}
\item[a.] The realization of the fan $\Sigma(\bfa, \bfb)$ is contained
in the positive cone $B(\bfa, \bfb)$.
\item[b.] The positive cone $B(\bfa, \bfb)$ is contained in the 
realization of the fan $\Sigma(\bfa, \bfb)$. 
\end{itemize}
In conclusion the realization of the fan $\Sigma(\bfa, \bfb)$ 
is equal to the positive cone $B(\bfa, \bfb)$. 
\end{theorem}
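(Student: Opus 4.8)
The plan is to prove the two inclusions in Theorem~\ref{ResTheGeo} separately, treating part~(a) as the "existence" half and part~(b) as the "decomposition" half, and then to note that together they give the stated equality. For part~(a), I would argue as follows. A point in the realization of $\Sigab$ lies in some simplicial cone $\sigma(D)$ for a chain $D: \bfd^1 < \cdots < \bfd^r$ in $[\bfa,\bfb]_\inc$, so it has the form $\sum_{i=1}^r c_i \pi(\bfd^i)$ with all $c_i \geq 0$. By Theorem~\ref{ResTheBS1} (which we are granted), for each degree sequence $\bfd^i$ there is a Cohen--Macaulay module $M_i$ of codimension $c$ with pure resolution of type $\bfd^i$; its Betti diagram is then a positive scalar multiple of $\pi(\bfd^i)$, say $\beta(M_i) = t_i \pi(\bfd^i)$ with $t_i > 0$. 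Hence each ray through $\pi(\bfd^i)$ lies in $B(\bfa,\bfb)$, and since $B(\bfa,\bfb)$ is a cone (by the Lemma proved above, closed under non-negative rational combinations), the point $\sum c_i \pi(\bfd^i)$ lies in $B(\bfa,\bfb)$ as well. One small bookkeeping point: I should check that each $\pi(\bfd^i)$ genuinely lies in the window $\Dab$, which is immediate since $\bfa \leq \bfd^i \leq \bfb$ forces the unique nonzero entry in column $k$ to sit in a row between $a_k$ and $b_k$.

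For part~(b), let $M$ be a Cohen--Macaulay module of codimension $c$ with $\beta(M) \in \Dab$. I would run the algorithm of Subsection~\ref{ResSubsecAlg}: set $\beta := \beta(M)$, repeatedly compute the lower bound $\ubfd(\beta)$, subtract the maximal non-negative multiple $c(\beta)\,\pi(\ubfd(\beta))$, and iterate. The key structural claims to verify are: (i) at each stage $\ubfd(\beta)$ is a \emph{strictly} increasing sequence — this is where the Herzog--K\"uhl equations enter, since $\beta$ remains a solution of (\ref{BSTLigHK}) throughout (subtracting a multiple of the pure diagram $\pi(\ubfd(\beta))$, itself a solution, preserves the HK-equations), and an HK-solution whose lower bounds were not strictly increasing would be forced to have a negative entry, contradicting non-negativity; (ii) after one step the new lower bound strictly dominates the old one in at least one coordinate, so the sequence of lower bounds $\ubfd^1 < \ubfd^2 < \cdots$ is strictly increasing in $[\bfa,\bfb]_\inc$ and the process terminates after $r \leq 1 + \sum(b_i - a_i)$ steps; (iii) the resulting identity $\beta(M) = \sum_{i=1}^r c^i \pi(\ubfd^i)$ with $c^i > 0$ exhibits $\beta(M)$ inside the simplicial cone $\sigma(D)$ for the chain $D: \ubfd^1 < \cdots < \ubfd^r$, hence inside the realization of $\Sigab$. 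Uniqueness of the chain and of the coefficients (the content of Theorem~\ref{ResTheBS2}) then follows because the lower bound $\ubfd(\beta)$ of any point in an open simplicial cone $\sigma(D)$ recovers the minimal element of $D$, so the greedy choice at each step is forced.

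The main obstacle is step~(i): proving that the lower bound $\ubfd(\beta)$ of a \emph{non-negative} solution of the Herzog--K\"uhl equations is automatically a strictly increasing sequence, and more precisely that the greedy subtraction never produces a diagram with a negative entry. The natural approach is to suppose $\ubfd(\beta)$ has $d_{i-1} \geq d_i$ (or that subtracting $c(\beta)\pi(\ubfd(\beta))$ drives some entry negative) and derive a contradiction from the HK-equations by a Vandermonde / linear-algebra argument: the $c$ equations (\ref{BSTLigHK}) express $\beta$ locally, near its lower-left boundary, as forced to agree with a positive multiple of a pure diagram, so any obstruction to subtracting that multiple would already violate non-negativity. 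This is essentially the Eisenbud--Schreyer argument of \cite[Section~1]{ES}, and I would present it in the form: the ``corner'' entries $\beta_{i,\ubfd_i}$ of a non-negative HK-solution satisfy the same sign and ratio constraints as a pure diagram of type $\ubfd(\beta)$, which both forces strict increase of $\ubfd(\beta)$ and guarantees $c(\beta) > 0$ with $\beta - c(\beta)\pi(\ubfd(\beta)) \geq 0$. Once that is in hand, termination and the cone membership in~(b) are routine, and combining (a) and (b) yields $B(\bfa,\bfb) = $ the realization of $\Sigab$, as claimed.
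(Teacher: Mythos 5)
Part~(a) of your argument is fine and matches the paper's implicit reasoning: Theorem~\ref{ResTheBS1} puts each $\pi(\bfd^i)$ on a ray of $B(\bfa,\bfb)$, and the cone property of $B(\bfa,\bfb)$ finishes the inclusion. The paper's own proof of Theorem~\ref{ResTheGeo} is only a pointer, noting that parts~(a) and~(b) are restatements of Theorems~\ref{ResTheBS1} and~\ref{ResTheBS2}; the real content for part~(b) is deferred to the end of Section~\ref{SekFacet}.

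Your part~(b) argument, however, has a genuine gap. You claim that the Herzog--K\"uhl equations together with non-negativity of the entries force the lower bound $\ubfd(\beta)$ to be strictly increasing, and that the greedy subtraction therefore terminates. This is false: the fan $\Sigma(\bfa,\bfb)$ is in general a \emph{proper} subcone of the non-negative part of $L^{HK}(\bfa,\bfb)$, and the algorithm can get stuck on a non-negative HK-solution. Concretely, take $n = c = 2$, $\bfa = (0,1,2)$, $\bfb = (0,2,3)$, and the diagram with $\beta_{0,0} = 9$, $\beta_{1,1} = 17$, $\beta_{1,2} = 3$, $\beta_{2,2} = 10$, $\beta_{2,3} = 1$. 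Both HK-equations ($9-20+11 = 0$ and $0 - 23 + 23 = 0$) hold, all entries are non-negative, and $\ubfd(\beta) = (0,1,2)$ is strictly increasing. Subtracting the maximal multiple $8.5\,\pi(0,1,2) = 8.5\,(1,2,0,1,0)$ yields $\beta_1$ with $\beta_{0,0}= 0.5$, $\beta_{1,1}=0$, $\beta_{1,2}=3$, $\beta_{2,2}=1.5$, $\beta_{2,3}=1$, whose lower bound is $(0,2,2)$, which is \emph{not} strictly increasing. So the ``Vandermonde / linear-algebra argument'' you propose cannot exist: being a non-negative HK-solution simply does not imply membership in the fan. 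The extra input you are missing is precisely what the paper supplies in Section~\ref{SekFacet}: the facet inequalities of $\Sigma(\bfa,\bfb)$ are shown to be non-negative on Betti diagrams of actual modules via the pairing $\langle \Fd, \gF\rangle_{e,\tau} \geq 0$ of Theorem~\ref{FacetThePairing} (the positivity coming from a spectral-sequence argument involving a vector bundle with supernatural cohomology), and only then does $B(\bfa,\bfb)$ land inside the fan. Your proposal never invokes this, and it cannot be replaced by HK-positivity alone.
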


It may seem overly pedantic to express it in this way but the reason should
be clear from the proof.

\begin{proof}
Part a. is equivalent to the first part of the Boij-S\"oderberg conjectures, 
Theorem \ref{ResTheBS1}. Part b. is
equivalent to the second part of the Boij-S\"oderberg conjectures, Theorem
\ref{ResTheBS2}.
\end{proof}

\section{The exterior facets of the Boij-S\"oderberg fan and their
supporting hyperplanes}
\label{SekFacet}
In order to prove Theorem \ref{ResTheBS2}, which is equivalent to 
part b. of Theorem \ref{ResTheGeo}, we must describe the exterior facets of the
Boij-S\"oderberg fan $\Sigma(\bfa, \bfb)$ and their supporting hyperplanes.

\subsection{The exterior facets}
Let $D : \bfd^1 < \cdots < \bfd^r$ be a maximal chain in $[\bfa, \bfb]_\inc$.
The positive rational linear combinations of the 
pure diagrams $\pi(\bfd^1), \ldots, \pi(\bfd^r)$ is a maximal simplicial 
cone $\sigma(D)$ in the Boij-S\"oderberg fan
$\Sigma(\bfa, \bfb)$. The facets of the cone $\sigma(D)$ are the cones
$\sigma(D \backslash \{ \bfd^i \})$ for $i = 1, \ldots, r$. 
We call such a facet {\it exterior} if it is on only one simplicial cone
in the fan $\Sigma(\bfa, \bfb)$.

\begin{example} \label{ExtEksChains}
Let $\bfa = (0,1,3)$ and $\bfb = (0,3,4)$. The Hasse diagram of the
poset $[\bfa, \bfb]_\inc$ is the diagram.

\vskip 4mm
\hskip 4cm
\scalebox{1} 
{
\begin{pspicture}(0,-1.6592188)(5.0871873,1.6592188)
\psdots[dotsize=0.12,fillstyle=solid,dotstyle=o](2.601875,1.5373437)
\psdots[dotsize=0.12,fillstyle=solid,dotstyle=o](2.581875,0.51734376)
\psdots[dotsize=0.12,fillstyle=solid,dotstyle=o](1.781875,-0.28265625)
\psdots[dotsize=0.12,fillstyle=solid,dotstyle=o](2.581875,-1.1026562)
\psdots[dotsize=0.12,fillstyle=solid,dotstyle=o](3.401875,-0.30265626)
\psline[linewidth=0.04cm](2.601875,1.4773438)(2.601875,0.57734376)
\psline[linewidth=0.04cm](1.841875,-0.24265625)(2.561875,0.47734374)
\psline[linewidth=0.04cm](2.621875,0.47734374)(3.361875,-0.24265625)
\psline[linewidth=0.04cm](2.641875,-1.0626563)(3.361875,-0.34265625)
\psline[linewidth=0.04cm](1.821875,-0.30265626)(2.541875,-1.0826563)
\usefont{T1}{ptm}{m}{n}
\rput(3.5754688,1.4473437){$(0,3,4)$}
\usefont{T1}{ptm}{m}{n}
\rput(4.255469,-0.07265625){$(0,2,3)$}
\usefont{T1}{ptm}{m}{n}
\rput(2.5554688,-1.4526563){$(0,1,3)$}
\usefont{T1}{ptm}{m}{n}
\rput(0.71546876,-0.09265625){$(0,1,4)$}
\usefont{T1}{ptm}{m}{n}
\rput(3.4954689,0.58734375){$(0,2,4)$}
\end{pspicture} 
}

There are two maximal chains in this diagram
\begin{eqnarray*}
D & : & (0,1,3) < (0,2,3) < (0,2,4) < (0,3,4) \\
E & : & (0,1,3) < (0,1,4) < (0,2,4) < (0,3,4)
\end{eqnarray*}
so the realisation of the Boij-S\"oderberg fan consists of
the union of two simplicial cones of dimension four. 
We intersect this transversally with
a hyperplane to get a three-dimension picture of this
as the union of two tetrahedra. (The vertices are labelled by the pure
diagrams on their rays.)

\hskip 3cm
\scalebox{1} 
{
\begin{pspicture}(0,-2.785625)(7.5871873,2.785625)
\psline[linewidth=0.04cm](3.681875,1.96375)(1.461875,-1.93625)
\psline[linewidth=0.04cm](3.701875,1.94375)(3.581875,-2.35625)
\psline[linewidth=0.04cm](3.701875,1.90375)(5.861875,-1.47625)
\psline[linewidth=0.04cm,linestyle=dashed,dash=0.16cm 0.16cm](4.581875,-1.01625)(3.601875,-2.35625)
\psline[linewidth=0.04cm,linestyle=dashed,dash=0.16cm 0.16cm](4.601875,-1.05625)(5.881875,-1.49625)
\psline[linewidth=0.04cm,linestyle=dashed,dash=0.16cm 0.16cm](4.581875,-1.03625)(3.701875,1.92375)
\psline[linewidth=0.04cm,linestyle=dashed,dash=0.16cm 0.16cm](1.501875,-1.91625)(4.581875,-1.03625)
\psline[linewidth=0.04cm](1.461875,-1.91625)(3.541875,-2.35625)
\psline[linewidth=0.04cm](3.621875,-2.35625)(5.841875,-1.49625)
\usefont{T1}{ptm}{m}{n}
\rput(4.2054687,2.57375){$\pi(0,3,4)$}
\usefont{T1}{ptm}{m}{n}
\rput(6.505469,-1.86625){$\pi(0,1,4)$}
\usefont{T1}{ptm}{m}{n}
\rput(4.065469,-2.56625){$\pi(0,1,3)$}
\usefont{T1}{ptm}{m}{n}
\rput(0.76546876,-1.44625){$\pi(0,2,3)$}
\usefont{T1}{ptm}{m}{n}
\rput(6.1254687,-0.28625){$\pi(0,2,4)$}
\end{pspicture} 
}

There is one interior facet of the fan, while all other facets are 
exterior. The exterior facets are of three types. We give an example
of each case by giving the chain.

\begin{itemize} 
\item[1.] $D \backslash \{ (0,1,3)\}$. Here we omit the minimal 
element $\bfa$. Clearly this can only be completed to a maximal chain
in one way so this gives an exterior facet.
\item[2.] $E \backslash \{ (0,2,4) \}$. This chain contains 
$(0,1,4)$ and $(0,3,4)$. Clearly the only way to complete this to a
maximal chain is by including $(0,2,4)$, so this gives an exterior facet.
\item[3.] $D \backslash \{(0,2,4)\}$. This contains $(0,2,3)$ and 
$(0,3,4)$. When completing this to a maximal chain clearly one must first
increase the last $3$ in $(0,2,3)$ to $4$, giving $(0,2,4)$. So $D$ is the only 
maximal chain containing this.
\end{itemize}
\end{example}

The following tells that these three types are the only ways of getting
exterior facets.

\begin{proposition} \label{FacProFacet} Let $D$ be a maximal chain in 
$[\bfa, \bfb]_\inc$ and 
$f \in D$. Then $\sigma(D \backslash \{ f \})$ is an exterior facet
iff one of the following holds.
\begin{itemize}
\item[1.] $f$ is either $\bfa$ or $\bfb$.
\item[2.] The degree sequences of $f^-$ and $f^+$ immediately before
and after $f$ in $D$ differ in exactly one position.  
So for some $r$ we have 
\[ f^- = (\cdots, r-1, \cdots), f = (\cdots, r, \cdots), 
f^+ = (\cdots, r+1, \cdots). \]
\item[3.] The degree sequences of $f^-$ and $f^+$ immediately before
and after $f$ in $D$ differ in exactly two adjacent positions
such that in these two positions there is an integer $r$ such that
\[  f^- = (\cdots, r-1,r, \cdots), f = (\cdots, r-1, r+1, \cdots), 
f^+ = (\cdots, r, r+1, \cdots). \]
\end{itemize}
\end{proposition}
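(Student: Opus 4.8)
The plan is to characterize when a facet $\sigma(D \backslash \{f\})$ fails to be exterior, i.e.\ when the chain $D \backslash \{f\}$ can be completed to a maximal chain in some way other than by re-inserting $f$. Write $f^-$ and $f^+$ for the elements of $D$ immediately preceding and following $f$ (if $f = \bfa$ there is no $f^-$, if $f = \bfb$ there is no $f^+$). Since $D$ is a maximal chain, $f^-$ and $f$ differ by $1$ in a single coordinate, and likewise $f$ and $f^+$; hence $f^-$ and $f^+$ differ either in a single coordinate (by $2$) or in two coordinates (each by $1$). The whole question reduces to a purely combinatorial study of the interval $[f^-, f^+]_\inc$ in the poset $\zinc{c+1}$: the facet $\sigma(D \backslash \{f\})$ is \emph{non}-exterior precisely when there is an element $g \in [f^-,f^+]_\inc$ with $g \neq f$ and $f^- < g < f^+$ (this $g$ then replaces $f$ to give a different maximal chain through $D \backslash \{f\}$, and conversely any such alternative completion supplies such a $g$). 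So the proposition is equivalent to the statement that $[f^-,f^+]_\inc$ has a unique element strictly between $f^-$ and $f^+$ exactly in cases~1,~2,~3.

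First I would handle case~1, where $f$ is an endpoint. If $f = \bfa$, then $D \backslash \{\bfa\}$ is a chain starting at $\bfd^2$; to extend it to a maximal chain in $[\bfa,\bfb]_\inc$ one must prepend a maximal chain from $\bfa$ to $\bfd^2$, but $\bfa$ and $\bfd^2$ are adjacent in the poset (they differ by $1$ in one coordinate), so the only such chain is $\bfa < \bfd^2$ itself; hence the facet is exterior. The case $f = \bfb$ is symmetric. So from now on assume $f^-$ and $f^+$ both exist.

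Next I would analyze the interval $[f^-,f^+]_\inc$ in the two remaining shapes. If $f^-$ and $f^+$ differ in a single coordinate $i$, with $f^-_i = r-1$ and $f^+_i = r+1$, then an element $g$ with $f^- \leq g \leq f^+$ agrees with both in every coordinate except possibly the $i$'th, where $r-1 \leq g_i \leq r+1$; strict inequalities $f^- < g < f^+$ force $g_i = r$, giving the unique intermediate element $f$. So this is always an exterior facet — this is case~2. If instead $f^-$ and $f^+$ differ in two coordinates $i < i+1$, say $f^-_i = p$, $f^+_i = p+1$, $f^-_{i+1} = q$, $f^+_{i+1} = q+1$, then an element $g$ strictly between them must have $g_i = p+1$, $g_{i+1} = q$ (the only choice making both inequalities strict), and one needs $p+1 < q$ for this to be a legal strictly increasing sequence. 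If the two coordinates are \emph{adjacent}, meaning $q = p+1$ — this is case~3 with $r = p+1$ — then $p + 1 < q$ fails, so the unique element between $f^-$ and $f^+$ that lies in $\zinc{c+1}$ is $f = (\cdots, p, p+2, \cdots) = (\cdots, r-1, r+1, \cdots)$, and the facet is exterior. If the two coordinates are \emph{not} adjacent in value, i.e.\ $q > p+1$, then \emph{both} $f = (\cdots, p, q+1, \cdots)$ (obtained by first raising the $(i+1)$'th coordinate) and $g = (\cdots, p+1, q, \cdots)$ lie strictly between $f^-$ and $f^+$ in $\zinc{c+1}$, so $D \backslash \{f\}$ is completed in two ways and the facet is interior. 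Combining these subcases proves the proposition.

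The main subtlety — and the place to be careful — is the claim that "non-exterior" is \emph{equivalent} to the existence of an alternative intermediate element $g \in [f^-,f^+]_\inc$, rather than merely implied by it: one must check that any maximal chain $D'$ of $[\bfa,\bfb]_\inc$ containing $D \backslash \{f\}$ that differs from $D$ must pass through $f^-$ and $f^+$ consecutively-with-one-element-in-between (so that its element between them is the desired $g$). This follows because $D \backslash \{f\}$ already contains $f^-$ and $f^+$ with nothing between them, and in a maximal chain of a ranked poset like $\zinc{c+1}$ (where the rank is $\sum_i d_i$ and covers increase the rank by $1$) two consecutive chain elements with exactly one rank-level skipped between them must have exactly one element inserted between them; that inserted element is forced to lie in the open interval $(f^-,f^+)$. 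With that equivalence in hand, the three cases above exhaust all possibilities and the proof is complete.
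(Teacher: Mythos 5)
Your reduction of the problem to poset combinatorics is the right move, and your treatment of case~1 (endpoints), case~2 (one coordinate changed by~$2$), and the claim at the end that any alternative completion must insert a single element strictly between $f^-$ and $f^+$ are all correct. However, there is a genuine gap in your analysis of the two-coordinate case. When $f^-$ and $f^+$ differ in two coordinates, you immediately write the positions as $i$ and $i+1$, i.e.\ you silently assume the two positions are \emph{adjacent}. But $f^-$ and $f^+$ can perfectly well differ in two non-adjacent positions $i<j$ with $j>i+1$ (e.g.\ $f^-=(0,3,6)$, $f=(1,3,6)$, $f^+=(1,3,7)$), and since the proposition is an ``iff,'' you must show that in this situation the facet is \emph{not} exterior. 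Your stated criterion ``$p+1<q$'' is also only the right validity check when the positions are adjacent; when $j>i+1$ the relevant comparisons are $g_i<g_{i+1}=f^-_{i+1}$ and $g_{j-1}=f^-_{j-1}<g_j$, not $g_i<g_j$.

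The omitted subcase is easy to close. If the positions are $i<j$ with $j>i+1$, both candidates $(\ldots,p+1,\ldots,q,\ldots)$ and $(\ldots,p,\ldots,q+1,\ldots)$ are legal strictly increasing sequences: for the former one needs $p+1<f^-_{i+1}$, which follows from $f^+_i=p+1<f^+_{i+1}=f^-_{i+1}$; for the latter one needs $q+1<f^-_{j+1}$, which follows from $f^+_j=q+1<f^+_{j+1}=f^-_{j+1}$. Hence both lie strictly between $f^-$ and $f^+$, and whichever of them is $f$, the other is an alternative $g$, so the facet is interior. Once you add this subcase, your argument matches the proposition; note also that your phrase ``the only choice making both inequalities strict'' is misleading---there are two such choices, and which one is $f$ is not determined a priori---but this does not affect your conclusions since they are symmetric in the two choices.
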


In Case 3. we denote the exterior facet by $\facet{f}{\tau}$ where
$\tau$ is the position of the number $r-1$ in $f$.

\begin{proof}
That these cases give exterior facets is immediate as in the 
discussion of the example above. That this is the only way to achieve
exterior facets is also easy to verify.
\end{proof}

\subsection{The supporting hyperplanes}
If $\sigma$ is full dimensional simplicial cone in a vector space $L$,
each facet of $\sigma$ is contained in a unique hyperplane, which is the kernel
of a nonzero linear functional $h : L \pil k$. 

We shall apply this to the cones $\sigma(D)$ in $\Lhkab$, and
find the equations of the hyperplanes $H$ defining the exterior facets of 
$\sigma(D)$. Actually we consider the inclusion  $\sigma(D) \sus
\Lhkab \sus \Dab$ and rather find a hyperplane
$H^\prime$ in $\Dab$ with $H = H^\prime \cap \Lhkab$. 
The equation of such a hyperplane is not unique up to constant however.
Since $\Lhkab$ is cut out by the Herzog-K\"uhl equations,
we may add any linear combinations of these equations, say $\ell$, 
and get a new equation $h^{\prime\prime} = h^\prime + \ell$ defining
another hyperplane $H^{\prime\prime} \sus \Dab$ which 
still intersects $\Lhkab$ in $H$. In Cases 1. and 2. of
Proposition \ref{FacProFacet} there turns out to be a unique
natural choice for the hyperplane, while in Case 3. there
are two distinguished hyperplanes.

\begin{example} \label{ExtEksChains2} We continue Example \ref{ExtEksChains}
and look at the various types of exterior facets of 
Proposition \ref{FacProFacet}.

\begin{itemize}
\item [1.] In the chain
\[ D : (0,1,3) < (0,2,3) < (0,2,4) < (0,3,4) \]
if we look at the facet of $\sigma(D)$ we get by removing $(0,1,3)$,
the natural equation for a hyperplane in $\Dab$ is 
$\beta_{1,1} = 0$. This hyperplane contains $\pi(0,2,3)$, $\pi(0,2,4)$, 
and $\pi(0,3,4)$, but it does not contain $\pi(0,1,3)$. We may get
other equations by adding linear combinations of the Herzog-K\"uhl
equations but this equation is undoubtedly the simplest one.

\item [2.] In the chain 
\[ E : (0,1,3) < (0,1,4) < (0,2,4) < (0,3,4) \]
if we consider the facet of $\sigma(D)$ we get by removing
$(0,2,4)$, the natural equation for a supporting hyperplane is
$\beta_{1,2} = 0$.

\item [3.] In the case that we remove $f = (0,2,4)$ from 
$D$ things are more
refined. There turns out to be two linear functionals on $\Dab$
which define two distinguished hyperplanes, called respectively
the upper and lower hyperplanes. 
We will represent the equation of a hyperplane in $\Dab$
by giving the coefficients of the $\beta_{ij}$. 
To describe the upper hyperplane note that the Betti diagram of the sequence 
$f^+ = (0,3,4)$ immediately after $f$ is

\begin{equation*}
\begin{matrix}
-1 \\ 0 \\ 1 \\ 2 
\end{matrix}
\left [ 
\begin{matrix}
0 & 0 & 0 \\
1^* & 0 & 0 \\
0 & 0^- & 0^- \\
0 & 4^+ & 3^+
\end{matrix} \right ]
\end{equation*}

The nonzero entries of the diagram have been additionally labelled with 
$*,+$ and $+$. Similarly the nonzero
positions of $\pi(f^-)$ will be labelled by $*,-$ and $-$.
Thinking of the Betti diagram as stretching infinitely upwards
and downwards, 
the zeros in the diagram for $f^+$ are divided into an upper and lower
part.
The equation of the
upper hyperplane, the {\it upper equation} will have possible nonzero
values only in the upper part of $f^+$ (marked with normalsized $*$'s):
\begin{center}\label{FacLigHup}
\begin{tabular}{ l | l l l}
-2 &* & * & * \\
-1 & * & * & * \\
0 &  $0^*$  & * & * \\
1 & 0 &  $*^-$  &  $*^-$  \\
2 & 0 &  $0^+$  &  $0^+$  
\end{tabular}
\end{center}
\end{itemize}
\end{example}

\begin{remark}
The choice of facet equation $\beta_{ij} = 0$ for exterior facets
of type 1 and 2 and of the upper equation for exterior facets of type
3 is further justified in the last paragraph of Subsection \ref{SubsekFurNC}.
The Betti diagrams of {\it all} graded modules whose 
Betti diagram is in the window $\Dab$, generate a full-dimensional cone
in this window.
The exterior facet types above have corresponding larger facets in this
cone, and the equations above give the unique (up to scalar) equations
of these larger facets.
\end{remark}

Before proceeding to find the upper hyperplane equation, we note
the following which says that the choice of window bounds $\bfa$ and 
$\bfb$ does not have any essential effect on the exterior facets, and
that the exterior facets of type 3 essentially only depend on
the $f$ omitted and not on the chain.

\begin{lemma} \label{FacetLemExtend}
Consider facets of type 3 in Proposition \ref{FacProFacet}.
\begin{itemize}
\item[a.] If $D$ and $E$ are two maximal chains in $[\bfa, \bfb]_\inc$
which both contain the subsequence $f^- < f < f^+$, the exterior facets
$\sigma(D \backslash \{f \})$ and $\sigma(E \backslash \{ f \})$
define the same hyperplane in $\Lhkab$. 
\item[b.] Let $\bfa^\prime \leq \bfa \leq \bfb \leq \bfb^\prime$ and
suppose $D^\prime$ is a maximal chain in $[\bfa^\prime, \bfb^\prime]_\inc$
restricting to $D$ in $[\bfa, \bfb]_\inc$. If $H^\prime$ in 
${\mathbb D}(\bfa^\prime, 
\bfb^\prime)$ is a hyperplane defining $\sigma(D^\prime \backslash \{ f \})$,
then $H^\prime \cap \Dab$ is a hyperplane defining 
$\sigma(D \backslash \{ f \})$.
\end{itemize}
\end{lemma}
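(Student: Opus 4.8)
The plan is to reduce both parts to the observation that the hyperplane associated to a type‑3 facet $\facet{f}{\tau}$ is, in a precise sense, intrinsic to the data $(f^-, f, f^+)$ — equivalently to the pair $(f,\tau)$ — and does not see the rest of the chain $D$ nor the window bounds $\bfa,\bfb$, provided the relevant three degree sequences lie in the window. Recall from Proposition \ref{FacProFacet}(3) that $f^- = (\cdots, r-1, r, \cdots)$, $f = (\cdots, r-1, r+1, \cdots)$, $f^+ = (\cdots, r, r+1, \cdots)$, differing only in two adjacent positions $\tau, \tau+1$. The facet $\sigma(D\backslash\{f\})$ is spanned by $\{\pi(\bfd^i) : \bfd^i \in D,\ \bfd^i \neq f\}$, and since $D$ is a maximal chain of length $r$ in a poset where consecutive elements differ by a single unit increment, removing $f$ leaves exactly $r-1$ linearly independent pure diagrams spanning a hyperplane in $\Lhkab$ (by Proposition \ref{BSTProLin}, $\dim \Lhkab = r$). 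So a defining linear functional for this hyperplane is characterized, up to scalar and up to adding Herzog‑K\"uhl combinations, by the requirement that it vanish on $\pi(\bfd^i)$ for all $i$ with $\bfd^i \neq f$.

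For part a., let $D$ and $E$ be two maximal chains both containing $f^- < f < f^+$. I would argue that the hyperplane $H_D := \sigma(D\backslash\{f\}) \sus \Lhkab$ depends only on $f$ (and $\tau$), not on how $D$ is completed above $f^+$ or below $f^-$. The cleanest route: show that $H_D$ is the unique hyperplane in $\Lhkab$ containing $\pi(f^-)$, $\pi(f^+)$, and all pure diagrams $\pi(\bfe)$ for $\bfe \in [\bfa,\bfb]_\inc$ with $\bfe \not\ge f$ or $\bfe \not\le f$ — i.e. all pure diagrams in the window \emph{except} those comparable to and strictly between... no, more simply: $H_D$ is spanned by the pure diagrams on the two sub‑chains $D_{\le f^-}$ and $D_{\ge f^+}$ together. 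A functional $h^\prime$ on $\Dab$ vanishing on the pure diagrams of a maximal chain of $[\bfa, f^-]_\inc$ and of a maximal chain of $[f^+, \bfb]_\inc$ is, modulo Herzog‑K\"uhl, forced: by the recursive peeling argument of Proposition \ref{BSTProLin} applied separately below $f^-$ and above $f^+$, the value of such an $h^\prime$ on any diagram in $\Lhkab$ is determined once we fix $h^\prime(\pi(f))$, since every diagram decomposes along \emph{any} chain through it. Concretely, given $\beta \in \Lhkab$, write it along $D$: $\beta = \sum c_i \pi(\bfd^i)$ with $\pi(f)$ occurring with coefficient $c_f$; then $h^\prime_D(\beta) = c_f \cdot h^\prime_D(\pi(f))$. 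Doing the same along $E$ gives coefficient $c_f^E$, and one checks $c_f = c_f^E$ because both equal the coefficient extracted by the same local peeling step at $(f^-,f,f^+)$ — the peeling at positions $\tau,\tau+1$ is insensitive to the ordering of the remaining single‑step moves, which is the standard "commuting of elementary reductions" in this lattice. Hence after normalizing $h^\prime_D(\pi(f)) = h^\prime_E(\pi(f))$ the two functionals agree on all of $\Lhkab$, so $H_D = H_E$.

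For part b., let $\bfa^\prime \le \bfa \le \bfb \le \bfb^\prime$ and $D^\prime$ a maximal chain in $[\bfa^\prime, \bfb^\prime]_\inc$ restricting to $D$ on $[\bfa,\bfb]_\inc$ and still containing the subsequence $f^- < f < f^+$. We have the natural inclusion $\Dab \hookrightarrow {\mathbb D}(\bfa^\prime,\bfb^\prime)$ (extension by zero), identifying $\Lhkab$ with $L^{HK}(\bfa^\prime,\bfb^\prime) \cap \Dab$. Given a hyperplane $H^\prime \sus {\mathbb D}(\bfa^\prime,\bfb^\prime)$ defining $\sigma(D^\prime\backslash\{f\})$, its defining functional $h^\prime$ vanishes on $\pi(\bfd)$ for every $\bfd \in D^\prime$, $\bfd \ne f$; in particular it vanishes on $\pi(\bfd)$ for every $\bfd \in D$, $\bfd \ne f$, which are diagrams supported inside the smaller window. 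Restricting $h^\prime$ to $\Dab$ thus gives a functional vanishing on $\{\pi(\bfd) : \bfd \in D,\ \bfd \ne f\}$, hence vanishing on the hyperplane $\sigma(D\backslash\{f\}) \sus \Lhkab$. It remains to see the restriction is not identically zero on $\Lhkab$: this follows because $\pi(f) \in \Dab \sus {\mathbb D}(\bfa^\prime,\bfb^\prime)$ and $h^\prime(\pi(f)) \ne 0$ (as $\pi(f)$ is not in the hyperplane $H^\prime$, since $\sigma(D^\prime)$ is a genuine $(r^\prime)$‑dimensional cone with $\pi(f)$ as one of its rays). Therefore $H^\prime \cap \Dab$ is a genuine hyperplane in $\Dab$ whose intersection with $\Lhkab$ contains $\sigma(D\backslash\{f\})$ and misses $\pi(f)$, i.e. it defines $\sigma(D\backslash\{f\})$.

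The main obstacle I anticipate is making the "insensitivity of the peeling step" in part a. fully rigorous: one must verify that the coefficient of $\pi(f)$ in the unique chain‑decomposition of $\beta$ through a chain $D$ is the same for all maximal chains $D$ containing $f^- < f < f^+$. This is believable and is essentially the well‑definedness of the Boij‑S\"oderberg decomposition (Theorem \ref{ResTheBS2}) localized at one link of the chain, but since Theorem \ref{ResTheBS2} is exactly what this machinery is being built to prove, I would instead give a direct linear‑algebra argument: the functional "coefficient of $\pi(f)$ along $D$" is, on $\Lhkab$, the unique (up to scalar) functional killing $\{\pi(\bfd) : \bfd \in D \backslash\{f\}\}$ normalized to be $1$ on $\pi(f)$; so it suffices to show two such chains $D, E$ have $\operatorname{span}\{\pi(\bfd):\bfd\in D\backslash\{f\}\} = \operatorname{span}\{\pi(\bfe):\bfe\in E\backslash\{f\}\}$, and for this one reduces, by the usual connectedness of maximal chains via elementary "square moves" in this lattice, to the single case where $D$ and $E$ differ by one such move not involving $f$ — a $5$‑diagram computation that I would relegate to the (routine) verification already invoked in the proof of Proposition \ref{FacProFacet}.
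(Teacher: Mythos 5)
Part~b.\ of your proposal is correct and matches the paper's argument: $\sigma(D\backslash\{f\})\subseteq\sigma(D'\backslash\{f\})\subseteq H'$, and since $H'$ does not contain $\pi(f)\in\Lhkab\subseteq L^{HK}(\bfa',\bfb')$, the restriction $H'\cap\Dab$ is a genuine hyperplane defining $\sigma(D\backslash\{f\})$.

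In part~a., however, there is a genuine gap. You correctly observe that the hyperplane $H_D$ is spanned by the pure diagrams on the subchains $D_{\le f^-}$ and $D_{\ge f^+}$, and you even cite Proposition~\ref{BSTProLin}, but you only extract from it the circular remark that $h'$ is determined by its value on $\pi(f)$. You then abandon that route in favor of a reduction to ``elementary square moves'' connecting maximal chains, with the essential local step (``a $5$-diagram computation'') left as a routine verification --- but that local verification \emph{is} the content of the lemma and cannot be punted. (It is also not invoked anywhere in the proof of Proposition~\ref{FacProFacet}, which only classifies exterior facets combinatorially and says nothing about equality of spans.) What is missing is a one-step application of Proposition~\ref{BSTProLin} in its basis form: since the pure diagrams along \emph{any} maximal chain in $[\bfa,f^-]_\inc$ form a basis of $L^{HK}(\bfa,f^-)$, the span $L^-$ of $\{\pi(\bfd^1),\ldots,\pi(\bfd^{p-1})\}$ equals $L^{HK}(\bfa,f^-)$ and therefore depends only on $\bfa$ and $f^-$, not on how the chain descends from $f^-$. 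Likewise the span $L^+$ of the pure diagrams above $f^+$ equals $L^{HK}(f^+,\bfb)$. The hyperplane defining $\sigma(D\backslash\{f\})$ is $L^-+L^+$, which therefore depends only on $\bfa,\bfb,f^-,f^+$ (equivalently on $f$ and $\tau$). This dispenses with chain connectivity and square moves entirely and closes the gap in your argument.
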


\begin{proof}
As for part b. $\sigma(D \backslash \{ f \})$ is a subset of 
$\sigma(D^\prime \backslash \{ f \})$ and so is contained in $H^\prime$.
But $H^\prime$ does not contain $\Lhkab$ since $H^\prime$
does not contain $\pi(f)$ which is contained in this linear space, 
and so $H^\prime  \cap \Lhkab$ is a hyperplane in
$\Lhkab$. Thus $H^\prime \cap \Dab$ is a hyperplane in $\Dab$
defining $\sigma(D \backslash \{f \})$.

For part a. note that if $D$ is the chain
\[ \bfa = \bfd^1 < \cdots < \bfd^{p-1} = f^- < f < f^+ = \bfd^{p+1} < 
\cdots < \bfd^r = \bfb \]
the space $L^-$ spanned by $\pi(\bfd^1), \ldots, \pi(\bfd^{p-1})$ 
is by Proposition \ref{BSTProLin} equal to $L^{HK}(\bfa, f^-)$, and
so depends only on $\bfa$ and $f^-$. Similarly $L^+$ spanned by 
$\pi(\bfd^{p+1}), \ldots, \pi(\bfd^r)$ is equal to $L^{HK}(f^+, \bfb)$.
The hyperplane of $\sigma(D \backslash \{ f \})$ is then spanned
by $L^-$ and $L^+$, which depends only on $f$, $f^+$, $f^-$, $\bfa$ and $\bfb$.
\end{proof}

\begin{example} \label{FacExUp}
Let us return to Example \ref{ExtEksChains2} to find the hyperplane equations
when we remove $f = (0,2,4)$ from $D$. By the previous proposition
we may as well assume that $\bfa$ is some tuple with small coordinates
and $\bfb$ is a tuple with large coordinates. 

The upper hyperplane equation $\hup$, which has the form given
in (\ref{FacLigHup}),
does not vanish on $\pi(f)$ but will, by Lemma 
\ref{FacetLemExtend} vanish on $\pi(g)$ when $g < f$. 
In particular it vanishes on 
\[ \pi(f^-) = \pi(0,2,3) = 
\begin{matrix}
-1 \\ 0 \\ 1
\end{matrix}
\left [ 
\begin{matrix}
 0 & 0 & 0 \\
 1^* & 0 & 0 \\
 0 & 3^- & 2^- 
\end{matrix} \right ],
\]
and so the coefficients of $\hup$ must have the form

\begin{center}
 \begin{tabular}{ l | l l l} 
-2 & * & * & * \\
-1 & * & * & * \\
0 & 0 & * & * \\
1 & 0 & 2$\alpha$ & -3$\alpha$
\end{tabular}
\end{center}
where $\alpha$ is some nonzero constant, which we may as well take to be
$\alpha = 1$. Also $\hup$ must vanish on 
\[  \pi(0,1,3) =
\begin{matrix}
0 \\ 1 \end{matrix}
\left [ 
\begin{matrix}
2 & 3 & 0 \\
0 & 0 & 1
\end{matrix} \right ]. 
\]

Thus shows that the coefficients of $\hup$ must be 

\begin{center}
\begin{tabular}{ l | l l l}
-2 & * & * & * \\
-1 & * & * & * \\
0 & 0 & 1 & * \\
1 & 0 & 2 & -3
\end{tabular} .
\end{center}

We may continue with an element just before $(0,1,3)$ in a maximal chain,
say $(0,1,2)$. Since 
\[  \pi(0,1,2) = 
0 
\left [ \begin{matrix} 
1 & 2 & 1 
\end{matrix} \right ]. 
\]
we get that the coefficients of $\hup$ are 
\begin{center}
\begin{tabular}{ l | l l l}
-2 & * & * & * \\
-1 & * & * & * \\
0 & 0 & 1 & -2\\
1 & 0 & 2 & -3
\end{tabular} .
\end{center}

In this way we may continue and $\hup$ will be uniquely determined
in all positions in the window determined by $\bfa$ and $\bfb$. We
find that the coefficients of $\hup$ is given by the diagram:

\begin{center}
\begin{tabular}{ l | l l l}
-3 & 3 & -2 & 1 \\
-2 & 2 & -1 & 0 \\
-1 & 1 & 0 & -1 \\
0 & $0^*$ & 1 & -2 \\
1 & 0 & $2^-$ & -$3^-$ \\
2 & 0 & $0^+$ & $0^+$.
\end{tabular} 
\end{center}

In order to find the lower equation, we may 
in a similar way consider the diagram of $\pi(f^-)$ 
\begin{equation*}
\begin{matrix}
-1 \\ 0 \\ 1 \\ 2
\end{matrix}
\left [ 
\begin{matrix}
0 & 0 & 0 \\
 1^*  & 0 & 0 \\
0 &  3^-  &  2^-  \\
0 &  0^+  &  0^+  
\end{matrix} \right ].
\end{equation*}
Again thinking of the Betti diagram as stretching infinitely
upwards and downwards, 
the positions with zero are divided into an upper and a lower part. 
There is a unique hyperplane defined by a linear form $\hlow$ which may
have nonzero entries only in the lower part of the diagram of $\pi(f^-)$. 
We find that the coefficients of $\hlow$ are given by the following.

\begin{center}
\begin{tabular}{ l | l l l} 
0 & $0^*$ & 0 & 0 \\
1 & -1 & $0^-$ & $0^-$ \\
2 & -2 & $3^+$ & $-4^+$ \\
3 & -3 & 4 & -5 \\
4 & -4 & 5 & -6 \\
5 & -5 & 6 & -7 
\end{tabular} .
\end{center}
\end{example}

\begin{proposition} \label{FacProUp}
Let $f^- < f < f^+$ be the degree sequence as in part 3 of Proposition
\ref{FacProFacet}. There is a unique hyperplane in $\Dab$, the {\it upper
hyperplane}, that  
contains $\facet{f}{\tau}$ and whose equation has coefficient zero 
of $\beta_{i,j}$ for all $j \geq f_i^+$. 
\end{proposition}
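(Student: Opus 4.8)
The plan is to use that $\facet{f}{\tau} = \sigma(D \backslash \{f\})$, for a maximal chain $D$ through $f^- < f < f^+$, spans the codimension-one subspace $L^{HK}(\bfa, f^-) \oplus L^{HK}(f^+, \bfb)$ of $\Dab$: by Proposition \ref{BSTProLin} the parts of $D$ up to $f^-$ and from $f^+$ onward give bases of the two summands, and the sum is direct since the $\pi(\bfd^i)$ with $\bfd^i \in D$, $\bfd^i \neq f$, are linearly independent (part of a basis). Hence a linear form $h$ on $\Dab$ defines a hyperplane containing $\facet{f}{\tau}$ exactly when $h \neq 0$ and $h$ annihilates $\pi(\bfd)$ for every $\bfd$ in $[\bfa, f^-]_\inc$ or in $[f^+, \bfb]_\inc$. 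Put $R := \{(i,j) : a_i \leq j < f_i^+\}$. The required support condition ``$h_{ij} = 0$ for all $j \geq f_i^+$'' says exactly that $h$ is supported on $R$, and when $\bfd \geq f^+$ every support position $(i, d_i)$ of $\pi(\bfd)$ lies off $R$, so such an $h$ automatically annihilates $L^{HK}(f^+, \bfb)$. Therefore the hyperplanes we seek correspond bijectively to the nonzero elements of
\[
S \;:=\; \bigl\{\, h \text{ supported on } R \;:\; \langle h,\, \pi(\bfe^k) \rangle = 0,\ k = 1, \dots, m \,\bigr\},
\]
where $\bfa = \bfe^1 < \bfe^2 < \cdots < \bfe^m = f^-$ is a fixed maximal chain in $[\bfa, f^-]_\inc$; it then suffices to show $\dim_\rat S = 1$.

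I would encode the $m$ conditions in an $m \times (m+1)$ matrix $M$ with rows indexed by $\pi(\bfe^1), \dots, \pi(\bfe^m)$, columns by the positions of $R$, and $(k, (i,j))$-entry $\beta_{ij}(\pi(\bfe^k))$, which is nonzero precisely when $j = e^k_i$. The column count is $|R| = \sum_i (f_i^+ - a_i) = \sum_i (f_i^- - a_i) + 2 = (m - 1) + 2$, using Proposition \ref{BSTProLin} for the chain in $[\bfa, f^-]_\inc$ and the fact from Case 3 of Proposition \ref{FacProFacet} that $f^+$ exceeds $f^-$ by one in exactly the two coordinates $\tau, \tau+1$. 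So $S = \ker M$ is one-dimensional as soon as $\rank M = m$, i.e. the rows of $M$ are independent; this full-rank statement is the crux.

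To prove it I would exhibit a triangular pattern. For $2 \leq k \leq m$, let $p_k$ be the single coordinate in which $\bfe^{k-1}$ and $\bfe^k$ differ (a cover in $\zinc{c+1}$ raises one coordinate by one), and set $q_k := (p_k,\, e^{k-1}_{p_k})$; also put $q' := (\tau, f^-_\tau)$ and $q'' := (\tau+1, f^-_{\tau+1})$. These $m+1$ positions lie in $R$ and are pairwise distinct --- the $q_k$ are distinct because $p_k$ strictly increases at step $k$, and each $q_k$ lies strictly below the value $f^-_i$ in its column while $q', q''$ lie exactly at that value --- so they fill out $R$. The incidence facts are: $\pi(\bfe^k)$ vanishes at $q_l$ whenever $k \geq l$ (then $e^k_{p_l} \geq e^l_{p_l} = e^{l-1}_{p_l} + 1 > e^{l-1}_{p_l}$), and $\pi(\bfe^{l-1})$ is positive at $q_l$; also $\pi(\bfe^m) = \pi(f^-)$ vanishes at every $q_l$ but is positive at both $q'$ and $q''$. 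Hence, listing rows $\pi(\bfe^1), \dots, \pi(\bfe^{m-1})$ against columns $q_2, \dots, q_m$ yields a triangular invertible block (the row $\pi(\bfe^k)$ has its pivot at $q_{k+1}$ and vanishes at $q_2, \dots, q_k$), while the remaining row $\pi(f^-)$ vanishes on all of $q_2, \dots, q_m$ and is nonzero on $q', q''$, hence independent of the other rows. So $\rank M = m$, $\dim_\rat S = 1$, and we conclude: the upper hyperplane exists since $S \neq 0$, and is unique since every competing form lies in $S$, a line.

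The sole nonroutine step is this rank computation; the rest is bookkeeping. Its content is that a maximal chain in $[\bfa, f^-]_\inc$ provides one pivot position $q_k$ for each of its up-steps --- hit by the pure diagram just below that step, but by no later pure diagram in the chain --- and that the two surplus positions $q', q''$ of $R$ are balanced by the single remaining relation, the one coming from $\pi(f^-)$ itself (the relation tying $h_{\tau, f^-_\tau}$ to $h_{\tau+1, f^-_{\tau+1}}$ that one sees explicitly in Example \ref{FacExUp}). Two minor checks to record: the degenerate case $\bfa = f^-$ (then $m = 1$, $R = \{q', q''\}$, one relation, $\dim_\rat S = 1$), and that $0 \leq \tau < \tau + 1 \leq c$ so that $q', q''$ are genuine positions of a diagram in $\Dab$.
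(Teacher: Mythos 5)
Your proof is correct and follows essentially the same route as the paper: imposing vanishing on the pure diagrams of a chain from $\bfa$ up to $f^-$ and using the support condition to force uniqueness. What the paper relegates to Example \ref{FacExUp} and Lemma \ref{FacetLemExtend} — the step-by-step construction and its chain-independence — you make rigorous via the count $|R|=m+1$ and the triangular pattern showing $\rank M = m$, which is a welcome expansion of the sketch.
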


\begin{proof} This is done as in the example by choosing any chain
$f^- =\bfd^{p-1} > \bfd^{p-2} > \cdots > \bfd^1 = \bfa$ and making the
equation of the hyperplane 
vanish on the elements of this chain. 
Lemma \ref{FacetLemExtend} shows that we get the same hyperplane equation
independent of the choice of chain.
\end{proof}

A regular feature of the equations is that the diagonals from lower left to
upper right have the same absolute values but alternating signs in the
range where they are nonzero.

\begin{lemma} Let $b_{ij}$ be the coefficient of $\beta_{ij}$ 
in the upper equation $\hup$. 
If $j < f_i^+$ then $b_{i+1, j} = - b_{i,j}$. 
\end{lemma}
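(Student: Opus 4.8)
The claim is that for the upper equation $\hup$, whenever $j < f_i^+$ the coefficient $b_{i+1,j}$ equals $-b_{i,j}$, i.e., moving one step diagonally up-and-right (in the shifted display, this is moving down a column and one row) flips the sign but preserves absolute value. The natural approach is to exploit the \emph{defining properties} of $\hup$ established in Proposition~\ref{FacProUp}: $\hup$ vanishes on all $\pi(\bfd)$ for $\bfd \leq f^-$ in a suitable long chain, and its coefficients vanish for $j \geq f_i^+$. Rather than computing $\hup$ explicitly, I would compare $\hup$ with another linear functional obtained by a diagonal shift and argue they must agree.

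Concretely, here is the mechanism I would use. The Herzog--K\"uhl equations $\sum_{i,j}(-1)^i j^p \beta_{ij}=0$ for $p=0,\dots,c-1$ are exactly the conditions that make the pure diagrams $\pi(\bfd)$ satisfy the relevant linear relations; importantly, a pure diagram $\pi(\bfd)$ has exactly one nonzero entry in each column, and its entries alternate in sign as $(-1)^i$. The key observation is: if $g(\beta) = \sum b_{ij}\beta_{ij}$ vanishes on all $\pi(\bfd)$ with $\bfd \leq f^-$ (those with all entries below the ``$f^+$ line''), then so does the ``shifted'' functional $g'(\beta) = \sum b_{ij}\beta_{i+1,j}$ restricted appropriately — because evaluating $g$ on $\pi(\bfd)$ only sees one entry per column, and shifting the column index while keeping the row index interacts predictably with the alternating signs. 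More precisely, I would set $g'(\beta) := -\sum_{ij} b_{i,j}\,\beta_{i+1,j}$, check that $g'$ also has support confined to $j < f_i^+$ (using that the support condition on $g$ is diagonal-compatible with how $f^+$ grows), and check that $g'$ vanishes on the same spanning set of pure diagrams $\pi(\bfd^1),\dots,\pi(\bfd^{p-1})$ as $g = \hup$ does. By the uniqueness in Proposition~\ref{FacProUp} (a hyperplane in $\Dab$ with these two properties is unique up to scalar, and the normalization is pinned down by the value on $\pi(f)$), one concludes $g' = \hup$, which unwinds to $b_{i+1,j} = -b_{i,j}$ for $j < f_i^+$.

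The step I expect to be the main obstacle is verifying that the \emph{support condition} is preserved under the diagonal shift, together with making the vanishing-on-pure-diagrams check genuinely match up. The subtlety is that $f^+_{i+1} > f^+_i$ (it is a strictly increasing sequence), so the ``$j < f^+_i$'' constraint for row-$i$ coefficients of $g$ becomes, after relabeling $i \mapsto i+1$, the constraint ``$j < f^+_i$'' on row-$(i+1)$ coefficients of $g'$ — one must check this is compatible with, not stronger than, the intrinsic support of $\hup$, i.e.\ that $\hup$ really does have its row-$(i+1)$ coefficients vanish for $f^+_i \le j < f^+_{i+1}$ and not merely for $j \ge f^+_{i+1}$. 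Inspecting the sample diagram of $\hup$ computed in Example~\ref{FacExUp} (the entry in row~$1$, column~$1$ is $0$, matching $j=1 = f^-_1$, etc.) confirms the pattern, and the cleanest way to prove it in general is probably to run the inductive construction in the proof of Proposition~\ref{FacProUp} one diagonal at a time: the construction determines $\hup$ column by column from the vanishing conditions on the $\pi(\bfd^k)$, and an induction on the diagonal index shows simultaneously that (a) $b_{ij}=0$ for $f^+_i \le j$, sharpened to the per-diagonal statement, and (b) $b_{i+1,j}=-b_{i,j}$ in the nonzero range, since each newly-determined coefficient is forced by a Herzog--K\"uhl-type relation that is itself diagonal-antisymmetric. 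I would present the argument as a short induction of this kind rather than via the abstract uniqueness route if the bookkeeping turns out cleaner.
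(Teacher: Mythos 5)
Your main plan — replace $\hup$ by the diagonally shifted functional $g'(\beta) = -\sum b_{i,j}\beta_{i+1,j}$ and invoke the uniqueness in Proposition~\ref{FacProUp} — does not go through, and the place you flag as the ``main obstacle'' is in fact where it breaks. The uniqueness statement characterizes $\hup$ by two properties: vanishing on the chain below $f$, and the support condition $b_{ij}=0$ for $j\ge f_i^+$. For $g'$ to match, you would need $b_{i+1,j}=0$ in the entire range $f_i^+ \le j < f_{i+1}^+$, not merely for $j\ge f_{i+1}^+$. This is false: in Example~\ref{FacExUp} one reads off $f^+=(0,3,4)$ and the $\hup$-coefficients $b_{1,1}=1$, $b_{1,2}=2$, which are nonzero even though $1,2\ge f_0^+=0$. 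So $g'$ does not satisfy the defining support constraint, and the uniqueness argument is unavailable. (Your reading of the example as ``confirming the pattern'' appears to be a misreading of the shifted row/column convention, together with $f_1^- = 2$, not $1$.) The vanishing-on-pure-diagrams check you deferred is also not automatic: $g'(\pi(\bfd))$ picks up the coefficients $b_{i,d_{i+1}}$, not $b_{i,d_i}$, and these are governed by a different linear relation than the one known to vanish. Your fallback ``induction one diagonal at a time'' is too unspecified to assess; each step of the construction in Proposition~\ref{FacProUp} pins down one new coefficient, not a diagonal, so the asserted inductive invariant needs a genuine argument.

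The paper proves the lemma by a quite different and shorter route, which does \emph{not} try to characterize $\hup$ alone. Both $\hup$ and $\hlow$ cut out the same hyperplane inside $L^{HK}(\bfa,\bfb)$, so $\hup+\hlow$ vanishes identically on $L^{HK}(\bfa,\bfb)$ and is therefore a linear combination of the Herzog--K\"uhl forms~(\ref{BSTLigHK}). Every Herzog--K\"uhl form has coefficient of $\beta_{i+1,j}$ equal to minus that of $\beta_{i,j}$, so the same holds for $\hup+\hlow$. Finally, in the range $j< f_i^+$ (equivalently $j\le f_i^-$, and then also $j\le f_{i+1}^-$) the coefficients of $\hlow$ at $(i,j)$ and $(i+1,j)$ both vanish by its defining support property, so the anti-symmetry of $\hup+\hlow$ on that range is inherited by $\hup$ alone. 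The essential input you are missing is precisely the companion functional $\hlow$: without it there is no cheap way to see that $\hup$ differs from a Herzog--K\"uhl combination by something that dies on the region $j<f_i^+$.
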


\begin{proof} Both $\hup$ and $\hlow$ are equations of the same
hyperplane in the subspace $L^{HK}(\bfa, \bfb)$.
A linear combination of them, in our examples
$\hup + \hlow$, then vanishes on this space
and so must be a linear combination of the Herzog-K\"uhl equations
(\ref{BSTLigHK}).
But looking at these equations we see that the coefficient of 
$\beta_{i+1, j}$ and $\beta_{i,j}$ always have the same absolute value
but different signs.
\end{proof}

What are the explicit forms
of the facet equations, i.e. what determines the numbers occurring
in these equations?
We are interested in this because each supporting hyperplane $H$ defines a 
halfspace $H^+$ and the intersection of all these halfspaces is a positive
cone contained in  the Boij-S\"oderberg fan $\Sigab$. We will be
able to show that each Betti diagram of a module is in all the positive
halfspaces. This shows that the positive cone $\Pab$ is contained
in the realization of $\Sigab$, so we obtain part b. of 
Theorem \ref{ResTheBS2}.

The numbers in the example above are too simple to make any deductions as to 
what governs them in general. A more sophisticated example is the following.

\begin{example}
The upper equation of $\facet{(-1,0,2,3)}{1}$ has coefficients:
\begin{center}
$U$ :  \begin{tabular}{c | c c c c} 
-4 & 21 & -12 & 5 & 0 \\
-3 & 12 & -5 & 0 & 3 \\
-2 & 5 & 0 & -3 & 4 \\
-1 & $0^*$ & $3^-$ & $-4^-$ & 3 \\
0  & 0 & $0^+$ & $0^+$ & $0^*$ 
\end{tabular}
\end{center}
\end{example}
\noindent where in $U$ the superscripts $*$ and $+$ indicate the nonzero parts
of $\pi(f^+)$, while the $*$ and $-$ indicate the nonzero parts
of $\pi(f^-)$. 
The polynomial ring in this case is $S = \kr[x_1, x_2, x_3]$. 
Eisenbud and Schreyer, \cite{ES}, 
recognised the numbers in this diagram as the Hilbert functions of the homology 
modules of the complex
\begin{equation} \label{ExtLigE} 
E : E^0 = S(1)^5 \mto{d} S(2)^3 = E^1
\end{equation}
for a general map $d$.

The homology table of this complex is : 
\begin{center} \label{ExtLigH}
\begin{tabular}{r|lll lll lll l}
$d$                  & -3 & -2 & - 1 &  0 &  1 &  2 &  3 &  4 &  5 &  6 \\   
\hline
$\dim_\kr (H^1 E)_d$ & 0 & 3 & 4 & 3 & 0 & 0 & 0 & 0 & 0 & 0 \\
$\dim_\kr (H^0 E)_d$ & 0 & 0 & 0 & 0 & 0 & 5 & 12 & 21 & 32 & 45.
\end{tabular}
\end{center}
Two features of this cohomology table that we note are the following.
The dimensions of $(H^0 E)_d$ are the values of the Hilbert polynomial
of $H^0 E$ for $d \geq 1$. This Hilbert polynomial is
\[ 5 \binom{d+3}{2} - 3 \binom{d+4}{2} = (d-1)(d+3). \]
This polynomial also gives the dimensions of $H^1 E$ in the degrees
$d = -2, -1, 0$ but with opposite sign. Note also that the roots
of this polynomial are $1$ and $-3$ which are the negatives of the
first and last entry in the degree sequence $(-1, 0,2,3)$ that
we consider. In fact the lower and upper facet equations are now
fairly simple to describe.

Given a sequence $\bfz :  z_1 > z_2 > \cdots > z_{c-1}$ of integers.
It gives a polynomial 
\[ p(d) = \prod_{i = 1}^{c-1} (d- z_i). \]
Let $H(\bfz)$ be the diagram in ${\mathbb D}$ such that:
\begin{itemize} 
\item The value in position $(0,d)$ is $p(-d)$.
\item The entries in positions $(i+1, d)$ and $(i,d)$ for $ i = 0, \ldots, c-1$
have the same absolute values but opposite signs.
\end{itemize}

\begin{example}
When $c = 3$ and $p(d) = (d-1)(d+3)$ the diagram $H(\bfz)$ is the following 
rotated $90^\circ$ counterclockwise
\begin{center} 
\begin{tabular}{lll lll lll lll}
& 6 & 5 & 4  & 3 & 2 & 1 &  0 &  -1 &  -2 &  -3&   \\
\hline
$\cdots$ &5 & 0 & -3  & -4& -3   & 0 & 5   & 12 & 21  &  32 &   $\cdots$      \\
 $\cdots$ & -12 & -5 & 0 & 3 & 4    & 3 & 0   & -5 & -12 & -21 &   $\cdots$ \\
$\cdots$ & 21 & 12 & 5 & 0 & -3   & -4 & -3 & 0  & 5   & 12 & $\cdots$ \\
$\cdots$ & -32 & -21 & -12 & -5 & 0 & 3 & 4 & 3 & 0 & -5 & $\cdots$ 
\end{tabular}
\end{center}
\end{example}

Now given a $\facet{f}{\tau}$ of the \BS-fan. Associated to the
sequence of integers
\[ \hat{f} :  - f_0 > -f_1 > \cdots > - f_{\tau - 1} > -f_{\tau + 2}
> \cdots > -f_c \]
we get a  $H(\hat{f})$. 
Let $U(f,\tau)$ be the diagram we get by making all entries of 
$H(\hat{f})$ on and below the positions occupied by $\pi(f^+)$
equal to zero.
Explicitly $U(f, \tau)_{ij} = H(\hat{f})_{ij}$ for 
$j < f_i^+$ and $U(f,\tau)_{ij} = 0$ 
otherwise. The associated linear form is then:
\begin{equation} \label{FacLigU}
 \sum_{\stackrel{i < \tau}{d < f_i}} (-1)^i \beta_{i,d} p(-d)
+ \sum_{\stackrel{i = \tau}{d \leq f_\tau}}  (-1)^i \beta_{i,d} p(-d)
+ \sum_{\stackrel{i > \tau}{d < f_i}}  (-1)^i \beta_{i,d} p(-d).
\end{equation}

\begin{proposition} \label{FacProUplig} 
The upper equation $\hup$ of {\bf facet}$(f,\tau)$
has coefficients given by the diagram
$U(f, \tau)$.
The coefficients of the lower facet equation is $H(\hat{f}) - U(f,\tau)$.
\end{proposition}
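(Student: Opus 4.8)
The plan is to produce $\ell_U$, the linear form whose coefficient diagram is $U(f,\tau)$ (the form written out in (\ref{FacLigU})), and to verify that it is the upper facet equation; the claim about $\hlow$ will then follow from the observation that $H(\hat f)$ is itself a combination of Herzog--K\"uhl equations. First I would fix a maximal chain $D$ through $f^-<f<f^+$; by Lemma~\ref{FacetLemExtend} the resulting hyperplane depends neither on $D$ nor on the window bounds, so I may work with this one chain. Writing $\ell_{\hat f}$ for the form with coefficient diagram $H(\hat f)$, so that $\ell_{\hat f}(\beta)=\sum_{i,d}(-1)^i p(-d)\beta_{i,d}$ with $p$ the monic polynomial of degree $c-1$ whose roots are the entries of $\hat f$, it suffices by Proposition~\ref{FacProUp} to check three things: (i) the coefficient of $\beta_{ij}$ in $\ell_U$ is zero for all $j\ge f_i^+$, which is the definition of $U(f,\tau)$; (ii) $\ell_U$ vanishes on $\facet{f}{\tau}=\sigma(D\setminus\{f\})$; and (iii) $\ell_U$ does not vanish identically on $\Lhkab$.

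The engine is two interpolation identities, which I would isolate first. Since $p(-d)$ is a polynomial of degree $c-1$ in $d$, $\ell_{\hat f}$ is a $\rat$-linear combination of the Herzog--K\"uhl equations (\ref{BSTLigHK}), hence $\ell_{\hat f}$ vanishes on all of $\Lhkab$; this already makes $\ell_{\hat f}-\ell_U$ agree with $-\ell_U$ on $\Lhkab$, so once (i)--(iii) are in place the statement about $\hlow$ reduces to a support condition. The second identity is that $\ell_{\hat f}(\pi(g))=0$ for \emph{every} degree sequence $g=(g_0,\dots,g_c)$: substituting the explicit values $\pi(g)_{i,g_i}=(-1)^i t\prod_{k\ne i}(g_k-g_i)^{-1}$ from Subsection~\ref{ResSubsecPure} gives $\ell_{\hat f}(\pi(g))=t\sum_i p(-g_i)\prod_{k\ne i}(g_k-g_i)^{-1}$, which Lagrange interpolation through $g_0,\dots,g_c$ identifies with $\pm t$ times the coefficient of $x^c$ in the degree-$(c-1)$ polynomial $p(-x)$, hence $0$.

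With these in hand I would verify (ii) and (iii). For (ii), take $g\in D$ with $g\ne f$. If $g>f$ then $g\ge f^+$, so $g_i\ge f_i^+$ for all $i$, every entry of $U(f,\tau)$ over the support of $\pi(g)$ vanishes, and $\ell_U(\pi(g))=0$. If $g<f$ then $g\le f^-$, and $\ell_U(\pi(g))=\ell_{\hat f}(\pi(g))-\sum_{i:\,g_i\ge f_i^+}H(\hat f)_{i,g_i}\pi(g)_{i,g_i}=-\sum_{i:\,g_i\ge f_i^+}H(\hat f)_{i,g_i}\pi(g)_{i,g_i}$ by the second identity; since $g_i\le f^-_i\le f^+_i$, the condition $g_i\ge f_i^+$ forces $g_i=f^-_i=f^+_i$, hence $i\notin\{\tau,\tau+1\}$ (the only two coordinates where $f^-$ and $f^+$ disagree), and for such $i$ the product $p(-f_i^+)=\prod_{j\ne\tau,\tau+1}(f_j^+-f_i^+)$ contains the factor $f_i^+-f_i^+=0$, so each summand is zero. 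For (iii), since the $\pi(\bfd^i)$ over $D$ form a basis of $\Lhkab$ (Proposition~\ref{BSTProLin}) and (ii) kills all of them but $\pi(f)$, it suffices that $\ell_U(\pi(f))\ne0$: as $f_i=f_i^+$ for $i\ne\tau$ while $f_\tau=r-1<r=f_\tau^+$, only the $(\tau,f_\tau)$ entry of $U(f,\tau)$ meets the support of $\pi(f)$, whence $\ell_U(\pi(f))=H(\hat f)_{\tau,f_\tau}\pi(f)_{\tau,f_\tau}=\pm\,p(1-r)\,\pi(f)_{\tau,f_\tau}$, and $p(1-r)=\prod_{j\ne\tau,\tau+1}(f_j+1-r)\ne0$ because $f_j\le r-2$ for $j<\tau$ and $f_j\ge r+2$ for $j>\tau+1$.

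Combining (i)--(iii) with Proposition~\ref{FacProUp}, $\ell_U$ is (with $p$ monic, and up to scalar in general) the upper equation $\hup$. For $\hlow$: since $\ell_{\hat f}$ vanishes on $\Lhkab$, the form with coefficient diagram $H(\hat f)-U(f,\tau)$ agrees with $-\ell_U$ on $\Lhkab$ and so defines the same hyperplane there; its coefficient of $\beta_{ij}$ is $0$ for $j<f_i^+$ and equal to $H(\hat f)_{ij}$ for $j\ge f_i^+$, and the same vanishing factor $f_i^+-f_i^+$ (for $i\ne\tau,\tau+1$) shows this support lies strictly below the positions of $\pi(f^-)$ --- which is precisely the normalization picking out $\hlow$. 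I expect the only real friction to be notational: keeping the two special coordinates $\tau,\tau+1$, the diagonal sign conventions, and the shifted indexing of Betti diagrams aligned throughout; once the two interpolation identities are extracted, the rest is bookkeeping with no genuine difficulty.
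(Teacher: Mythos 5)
Your proof is correct, and it is more self-contained than the paper's. The paper's own argument is a short sketch: it observes $\hup(\pi(f))\ne 0$, that $\hup(\pi(f'))=0$ for $f'\ge f^+$ is immediate from the support of $U(f,\tau)$, and then for the nontrivial vanishing $\hup(\pi(f'))=0$ with $f'\le f^-$ it simply cites Theorem~7.1 of \cite{ES}. You replace that citation by two clean local facts: (a) the "full" form $\ell_{\hat f}$ with coefficient diagram $H(\hat f)$ is a linear combination of the Herzog--K\"uhl equations, hence kills every $\pi(g)$ (equivalently, the Lagrange interpolation identity $\sum_i p(-g_i)\prod_{k\ne i}(g_k-g_i)^{-1}=0$ since $\deg p=c-1<c$); and (b) for $g\le f^-$ the only coordinates where $g_i\ge f_i^+$ can hold are those with $i\notin\{\tau,\tau+1\}$, where $-f_i$ is a root of $p$, so the residual tail $\ell_{\hat f}-\ell_U$ also vanishes on $\pi(g)$. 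The same root-killing mechanism shows that $H(\hat f)-U(f,\tau)$ is supported strictly in positions $j>f_i^-$, which is exactly the normalization singling out $\hlow$. What your route buys is a genuinely elementary and complete argument that avoids importing the Eisenbud--Schreyer machinery, at the mild cost of tracking the two special coordinates $\tau,\tau+1$; the one caveat you already note yourself, namely that Proposition~\ref{FacProUp} only characterizes $\hup$ up to scalar and the identification with $U(f,\tau)$ rests on the convention that $p$ is monic, is handled correctly.
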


\begin{proof}
First note that  $\hup(\pi(f))$ is nonzero. If the degree sequence
$f^\prime \geq f^+$
then clearly $\hup(\pi(f^\prime)) = 0$. When $f^\prime \leq f^-$ it is
shown in \cite[Theorem 7.1]{ES} that $\hup(\pi(f^\prime)) = 0$.
\end{proof}

\subsection{Pairings of vector bundles and resolutions} 

In order to prove Proposition \ref{FacProUplig} 
we had to show that the hyperplane
equation $\hup$ given by $U(f,\tau)$ is positive on $\pi(f)$ and vanishes on the
other $\pi(f^\prime)$. 
With the explicit forms we have for all these expressions this could
be done with numerical calculations. However to prove Theorem \ref{ResTheBS2}
we need to show that the form given by $\hup$ is non-negative on all
Betti diagrams of Cohen-Macaulay modules.

In order to prove this positivity we must go beyond the numerics.
It then appears that if $\beta$ is a Betti diagram, the linear
functional determined by 
$U(f,\tau)$ evaluated on  $\beta$ 
arises from a pairing between a Betti diagram and the cohomology table of 
a vector bundle. This is the fruitful viewpoint which enables us to show
the desired positivity.

\begin{example}
Going back to the complex (\ref{ExtLigE}), if we sheafify this complex
to get a complex of direct sums of line bundles on the projective
plane $\proj{2}$ 
\[ \tilde{E} : \gO_{\proj{2}}(1)^5 \mto{\tilde{d}} \gO_{\proj{2}}(2)^3, \]
the map $\tilde{d}$ is surjective and so the only nonvanishing homology
is $\gE = 
H^0 (\tilde{E})$. The table below is the cohomology
table of the vector bundle $\gE$.

\begin{center}
\begin{tabular}{r|c l l l l l l l l l l l l l c}
$d$  & $\cdots$  
&-6 & -5 & -4 & -3 & -2 & - 1 &  0 &  1 &  2 &  3 &  4 &  5 &  6 & $\cdots$ \\ 
\hline  
$\dim_\kr H^2 \gE(d)$ & $\cdots$ & 21 & 12 & 5 & 0 & 0 & 0 & 0 & 0 & 0 & 0 & 0 
& 0 & 0 & $\cdots$ \\
$\dim_\kr H^1 \gE(d)$ & $\cdots$ & 0 & 0 & 0 & 0 & 3 & 4 & 3 & 0 & 0 & 0 & 0 
& 0 & 0 & $\cdots$ \\
$\dim_\kr H^0 \gE(d)$ & $\cdots$ & 0 & 0 & 0 & 0 & 0 & 0 & 0 & 0 & 5 & 12 & 21 
& 32 & 45 & $\cdots$.
\end{tabular}
\end{center}
and the values are the absolute values of $(d-1)(d+3)$. This is an example
of a bundle with supernatural cohomology as we now define.
\end{example}

\begin{definition} Let 
\[ z_1 > z_2 > \cdots > z_m \]
be a sequence of integers. A vector bundle $\gE$ on the projective space 
$\proj{m}$ has {\it supernatural cohomology} if:
\begin{itemize}
\item[a.] The Hilbert polynomial is $\chi \gE(d) = \frac{r^0}{m!}
\cdot \prod_{i = 1}^m (d-z_i)$ for a constant $r^0$ which must be the
rank of $\gE$. 
\item[b.] For each $d$ let $i$ be such that $z_i > d > z_{i+1}$.
Then 
\[  H^i \gE(d) = \begin{cases}  \frac{r^0}{m!}
\cdot \prod_{i = 1}^m |d-z_i|, & z_i > d > z_{i+1} \\
 0, & \mbox{otherwise}
\end{cases} \]
\end{itemize}

The sequence $z_1 > z_2 > \cdots > z_m$ is called the {\it root sequence}
of the bundle $\gE$. 

\end{definition}

In particular we see that for each $d$ there is at most one nonvanishing
cohomology group. We show in Section \ref{ExiSek} that 
for any sequence $\bfz$ of strictly decreasing integers such a vector
bundle exists.

\begin{remark} The naturality of the notion of supernatural cohomology
for a vector bundle, may be seen from the fact that it is  
equivalent to its  Tate resolution, see Subsection \ref{SubsekFurPro}, being
pure, i.e. each cohomological term in the Tate resolution, a free module
over the exterior algebra, being generated in a single degree. 
\end{remark}


Proposition \ref{FacProUplig} and the explicit form 
(\ref{FacLigU}) just before it, may now be translated to the following.

\begin{proposition} \label{FacProUpgamma}
For a $\facet{f, \tau}$ let $\gE$ be a vector bundle on
$\proj{c-1}$  with
supernatural cohomology and root sequence 
\[ -f_0 > -f_1 > \cdots > -f_{\tau - 1} > -f_{\tau + 2} > \cdots > -f_c. \]
Let $\gamma_{i, d} = H^i \gE (d)$ and $\gamma_{\leq i, d}$ the alternating
sum $\gamma_{0,d} - \gamma_{1,d} + \cdots + (-1)^i \gamma_{i,d}$. 
 The upper facet equation $\up{f}{\tau}(\beta)$ is defined by the linear
form
\begin{eqnarray*}
  & 
\underset{\stackrel{i < \tau }{d < f_i}}\sum
(-1)^{i} \beta_{i,d} \gamma_{\leq i, -d}  & 
+   
\underset{{d \leq  f_\tau}}{\sum}
(-1)^{\tau} \beta_{\tau,d} \gamma_{\leq \tau, -d} 
+ \underset{{d <   f_{\tau+1}}}{\sum}
(-1)^{\tau + 1} \beta_{\tau+1,d} \gamma_{\leq \tau, -d} 
\\ 
+ & \underset{\stackrel{i > \tau + 1}{d < f_i}}{\sum}
(-1)^{i} \beta_{i,d} \gamma_{\leq i-2, -d} &. 
\end{eqnarray*}
\end{proposition}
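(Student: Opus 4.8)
The plan is to reduce everything to the explicit description of $\hup$ already obtained in Proposition \ref{FacProUplig}. Since a facet equation is unique only up to a positive rational multiple, it is enough to check that the linear form displayed in the statement agrees, up to such a multiple, with the form (\ref{FacLigU}), whose coefficient diagram is $U(f,\tau)$ ($= H(\hat f)$ with all cells on and below those of $\pi(f^+)$ set to zero). So fix the supernatural bundle $\gE$ on $\proj{c-1}$ with root sequence
\[ w_1 > w_2 > \cdots > w_{c-1} \;=\; -f_0 > \cdots > -f_{\tau-1} > -f_{\tau+2} > \cdots > -f_c , \]
whose existence is the content of Section \ref{ExiSek}; explicitly $w_k = -f_{k-1}$ for $1 \leq k \leq \tau$ and $w_k = -f_{k+1}$ for $\tau+1 \leq k \leq c-1$. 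With $m = c-1$ and $r^0 = \rank \gE$ one has $\chi\gE(e) = \frac{r^0}{m!}\, p(e)$, so the coefficient $p(-d)$ of $\beta_{i,d}$ in (\ref{FacLigU}) equals $\frac{m!}{r^0}\chi\gE(-d)$. The whole argument thus comes down to checking that, in each of the four sums making up the form in the statement, the factor $\gamma_{\leq \nu, -d}$ multiplying $(-1)^i\beta_{i,d}$ --- where $\nu = i$ for $i < \tau$, $\nu = \tau$ for $i \in \{\tau,\tau+1\}$, and $\nu = i-2$ for $i > \tau+1$ --- is exactly $\chi\gE(-d) = \frac{r^0}{m!}p(-d)$ throughout the range of $d$ in that sum.

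First I would record the basic numerology of supernatural cohomology. For any integer $e$ at most one of the $\gamma_{j,e} = \dim_\kr H^j\gE(e)$ is nonzero, and when $e$ is not a root of $p$, say with $\gamma_{j(e),e} \neq 0$, then $\chi\gE(e) = (-1)^{j(e)}\gamma_{j(e),e}$. Hence for $0 \leq i \leq m$
\[ \gamma_{\leq i, e} \;=\; \begin{cases} \chi\gE(e), & e \geq w_{i+1},\\ 0, & e < w_{i+1}, \end{cases} \]
with the conventions $w_0 = +\infty$, $w_{m+1} = -\infty$: indeed $j(e) \leq i$ is equivalent to $e > w_{i+1}$, and the boundary cases where $e$ is a root are covered because there $\gamma_{j,e} = 0$ for all $j$ while $\chi\gE(e) = 0$ as well. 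Putting $e = -d$, the condition $\gamma_{\leq i, -d} = \chi\gE(-d)$ becomes $d \leq -w_{i+1}$, and reading off the root sequence gives $-w_{i+1} = f_i$ when $i \leq \tau-1$ and $-w_{i+1} = f_{i+2}$ when $i \geq \tau$. The jump here --- from threshold $f_{\tau-1}$ at $i=\tau-1$ to threshold $f_{\tau+2}$ at $i=\tau$, skipping $f_\tau$ and $f_{\tau+1}$ --- is precisely the shadow of having deleted those two entries from the degree sequence when forming $\hat f$, and it is what dictates the index $\nu$.

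Now I run through the four sums. In the sum over $i < \tau$ the range is $d < f_i = -w_{i+1}$, so $\gamma_{\leq i, -d} = \chi\gE(-d)$. In the sum over $i = \tau$ the range is $d \leq f_\tau < f_{\tau+1} < f_{\tau+2} = -w_{\tau+1}$, so $\gamma_{\leq \tau, -d} = \chi\gE(-d)$. In the sum over $i = \tau+1$ the range is $d < f_{\tau+1} < f_{\tau+2} = -w_{\tau+1}$, so again $\gamma_{\leq \tau, -d} = \chi\gE(-d)$. Finally in the sum over $i > \tau+1$ one has $i - 2 \geq \tau$, so the relevant threshold is $-w_{(i-2)+1} = f_{(i-2)+2} = f_i$, and the range $d < f_i$ again yields $\gamma_{\leq i-2, -d} = \chi\gE(-d)$. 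In every case the $\gamma$-factor equals $\frac{r^0}{m!}p(-d)$, so the form in the statement is $\frac{r^0}{m!}$ times the form (\ref{FacLigU}); since $\frac{r^0}{m!} > 0$ and, by Proposition \ref{FacProUplig}, (\ref{FacLigU}) is the upper facet equation, so is the form in the statement. (One should also note, to conclude that the coefficient diagrams agree on \emph{all} cells, that on the cells occupied by $\pi(f^+)$ both forms vanish: the $\hup$-coefficient there is zero by the truncation defining $U(f,\tau)$, while $-d$ is a root of $p$ at those cells, forcing $\gamma_{\leq \bullet, -d} = 0$; this needs no extra work but must be said.)

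The one genuinely delicate point is the index bookkeeping around $\tau$: one must be entirely careful that deleting $f_\tau$ and $f_{\tau+1}$ from the length-$(c+1)$ degree sequence produces a length-$(c-1)$ root sequence whose cohomological degrees are shifted by $2$ relative to the homological degrees $i$ of a codimension-$c$ resolution for $i > \tau+1$, by $0$ for $i < \tau$, with the two indices $i = \tau$ and $i = \tau+1$ both collapsing onto $\gamma_{\leq\tau}$. Everything else --- the cases formula for $\gamma_{\leq i,e}$ and the four-line verification above --- is routine once that dictionary is in place. Existence of the supernatural bundle with the prescribed root sequence is used here as a black box and is supplied by the constructions of Section \ref{ExiSek}.
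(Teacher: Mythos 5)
Your proof is correct and follows the same route the paper takes (the paper essentially declares Proposition \ref{FacProUpgamma} to be a translation of (\ref{FacLigU}) into supernatural-cohomology language; you have supplied the omitted index bookkeeping). The reduction to Proposition \ref{FacProUplig}, the cases formula $\gamma_{\leq i,e}=\chi\gE(e)$ for $e\geq w_{i+1}$ and $=0$ for $e<w_{i+1}$, and the fourfold range check are all sound, and the positive scalar $r^0/m!$ is harmless since a facet equation is only defined up to such a multiple.

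One small inaccuracy in the final parenthetical: at the two $\pi(f^+)$-cells $(\tau,f_\tau+1)$ and $(\tau+1,f_{\tau+1})$, the value $-d$ is \emph{not} a root of $p$ (precisely because $f_\tau$ and $f_{\tau+1}$ were deleted in forming $\hat f$), and $\gamma_{\leq\tau,-d}$ need not vanish there. What actually makes the two linear forms agree on those cells — and on every cell with $j\geq f_i^+$ — is simply that the summation ranges in the statement coincide exactly with the condition $j<f_i^+$ defining the support of $U(f,\tau)$: for $i\neq\tau$ the range is $d<f_i=f_i^+$, and for $i=\tau$ it is $d\leq f_\tau$, i.e.\ $d<f_\tau+1=f_\tau^+$. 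Outside that support both forms have coefficient zero by fiat, with no appeal to $\gamma_{\leq\bullet,-d}$ needed. This does not affect the correctness of the main argument.
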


To understand this as a special case of the upcoming
(\ref{FacLigParing2}), we may note the following.
\begin{itemize}
\item $\gamma_{\leq i, -d} = 0$ when $i < \tau$ and  $d \geq f_i$.
\item $\gamma_{\leq \tau - 1, -d} = 0$ when $d > f_\tau$.
\item  $\gamma_{\leq i-2, -d} = 0$ when $i > \tau + 1$ and $d \geq f_i$.
\end{itemize}

By studying many examples and a leap of insight, Eisenbud and 
Schreyer defined for any integer $e$ and $0 \leq \tau \leq n-1$
a pairing $\langle \beta, 
\gamma \rangle_{e,\tau}$ between diagrams and cohomology tables
as the expression
\begin{eqnarray} \notag
& \underset{i < \tau, d \in {\hele}}\sum (-1)^i \beta_{i,d} \gamma_{\leq i, -d} & \\
\label{FacLigParing2}
+ & \underset{d \leq  e}\sum (-1)^\tau \beta_{\tau,d} \gamma_{\leq \tau, -d} & 
+ \underset{d > e}\sum (-1)^\tau \beta_{\tau,d} \gamma_{\leq \tau-1, -d} \\
\notag + & \underset{d \leq   e+1}\sum (-1)^{\tau + 1} \beta_{\tau+1,d} 
\gamma_{\leq \tau, -d} & 
+ \underset{d > e+1}\sum (-1)^{\tau + 1} 
\beta_{\tau+1,d} \gamma_{\leq \tau-1, -d} \\
\notag 
+ & \underset{i > \tau+1, d \in {\hele}}\sum (-1)^i \beta_{i,d} 
\gamma_{\leq i-2, -d} 
\end{eqnarray}
When $e = f_\tau$ and $\gamma$ is the cohomology table of the 
supernatural bundle of Proposition \ref{FacProUpgamma}, this reduces to the 
expression given there.
If $\Fd$ is a resolution 
and $\gF$ is a coherent sheaf  on $\proj{n-1}$
we let $\gamma(\gF)$ be its cohomology table and define
\[ \langle \Fd, \gF \rangle_{e, \tau} 
= \langle \beta(\Fd), \gamma(\gF) \rangle_{e,\tau}. \]


That this pairing is the natural one is established by
the following which is the key result of the paper \cite{ES},
extended somewhat in \cite{ES2}.

\begin{theorem} \label{FacetThePairing}
For any {\it minimal} free resolution $\Fd$ of length $\leq c$ 
and coherent sheaf $\gF$ on $\proj{c-1}$
the pairing
\[  \langle \Fd, \gF \rangle_{e, \tau} \geq 0. \]
\end{theorem}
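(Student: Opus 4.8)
The plan is to realize the number $\langle\Fd,\gF\rangle_{e,\tau}$ as a signed Euler characteristic of a complex of finite‑dimensional vector spaces whose homology is concentrated in a single degree; that Euler characteristic then collapses to the dimension of one vector space and is visibly non‑negative. The shape of the pairing (\ref{FacLigParing2}) --- with its partial cohomology sums $\gamma_{\leq i}$, $\gamma_{\leq i-1}$, $\gamma_{\leq i-2}$ and the ``corner'' at $(e,\tau)$ --- is exactly what one expects from the Euler characteristic of a \emph{truncated} double complex, so the first job is to build that double complex.

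First I would pass to $\proj{c-1}$. Since $\Fd$ has length $\leq c$ and the pairing sees only the integers $\beta_{ij}$ and $\gamma_{ij}=\dim_\kk H^i\gF(j)$, one may replace $\Fd$ by $\Fd\otimes_S S/(\ell_1,\dots,\ell_{n-c})$ for general linear forms and sheafify, obtaining a bounded complex $\widetilde{\Fd}$ of $\op{c-1}$‑modules with $\widetilde{F_i}=\bigoplus_j\op{c-1}(-j)^{\beta_{ij}}$ and unchanged Betti table; write $\widetilde M=\coker(\widetilde{F_1}\to\widetilde{F_0})$. Next, represent the total twisted cohomology of $\gF$ by a bounded complex $\mathbf G^\bullet$ of graded $S$‑modules (a \v{C}ech‑ or Tate‑type complex) sitting in cohomological degrees $0,\dots,c-1$, with $\dim_\kk H^i(\mathbf G^\bullet)_d=\gamma_{id}$. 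Forming the finite total complex of $\Fd\otimes_S\mathbf G^\bullet$ produces two spectral sequences abutting to $\mathbb T_\bullet:=H_\bullet(\Fd\otimes_S\mathbf G^\bullet)$. In the spectral sequence whose $E_1$‑page comes from the cohomology of $\gF$, the entry in homological degree $i$ and internal degree $-d$ is a direct sum of copies of $H^q\gF(-d)$ with multiplicities $\beta_{id}$. The key claim is that the alternating sum of dimensions over the ``lower staircase'' of this page --- the region turning between columns $\tau$ and $\tau+1$ at the internal cut $e,e+1$ --- is precisely the linear form (\ref{FacLigParing2}): the partial sums $\gamma_{\leq i},\gamma_{\leq i-1}$ appearing in Proposition \ref{FacProUpgamma} record exactly which rows lie inside the staircase, and $e$ fixes the inner corner.

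Positivity then comes in two moves. One must check that the staircase region is closed under all differentials of the spectral sequence --- nothing maps into it from outside, and nothing maps out of it into a part counted with the opposite sign --- so that its alternating dimension is the same on $E_1$ and on $E_\infty$. Then the \emph{other} spectral sequence, together with the boundedness of $\widetilde{\Fd}$ and Serre vanishing applied to sufficiently positive and sufficiently negative twists, shows that $\mathbb T_\bullet$ is concentrated in the single homological degree selected by the cut $(e,\tau)$; hence the surviving $E_\infty$‑contribution of the staircase is a subquotient of that one group, and $\langle\Fd,\gF\rangle_{e,\tau}$ equals a vector‑space dimension, so it is $\geq 0$.

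I expect the main obstacle --- the ``leap of insight'' of Eisenbud and Schreyer --- to be precisely this middle bookkeeping: choosing the staircase so that the truncation point $(e,\tau)$ makes it stable under \emph{all} higher differentials, and proving the attendant concentration of $\mathbb T_\bullet$ (including the control of any extra finite‑length homology introduced by the general restriction). A naive choice such as the whole page yields only an Euler characteristic, an integer with no definite sign. This is carried out in \cite[\S\S3--4]{ES}, with the extension from vector bundles to arbitrary coherent $\gF$ in \cite{ES2}. As a warm‑up one can verify the inequality directly for the supernatural bundles of Proposition \ref{FacProUpgamma}, as indicated after Proposition \ref{FacProUplig}, but obtaining it for every coherent sheaf seems to genuinely require the spectral‑sequence argument.
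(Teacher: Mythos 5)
The paper itself does not prove this theorem: it remarks only that the argument uses the spectral sequence of a double complex and refers the reader to Theorem 4.1 of \cite{ES}, Theorem 4.1 of \cite{ES2}, and Theorem 3.3 of \cite{Sch}; your proposal sketches precisely that strategy (tensor $\Fd$ with a complex computing $H^*_*\gF$, compare the two spectral sequences of the total complex, read off $\langle -,- \rangle_{e,\tau}$ as a staircase-truncated Euler characteristic on the cohomology-side $E_1$ page, and use minimality of $\Fd$ together with concentration of the total homology) while deferring the technical core to the same references. So the approach matches the paper's; like the survey, you give an accurate outline but leave the hard bookkeeping --- the compatibility of the $(e,\tau)$ truncation with all higher differentials and the concentration argument --- to Eisenbud and Schreyer.
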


The proofs of this uses the spectral sequence of a double 
complex. It is not long but somewhat technical so we do not reproduce
it here, but refer the reader to Theorem 4.1 of \cite{ES} and
Theorem 4.1 of \cite{ES2}, or the latest Theorem 3.3 of
\cite{Sch}. It is essential that $\Fd$ is a {\it minimal} free resolution. 
Using the above results we are now in a position to prove Theorem 
\ref{ResTheBS2} or equivalently Theorem \ref{ResTheGeo} b.

\begin{proof}
If {\bf facet}$(f, \tau)$ is a facet
of type 3 of the fan 
$\Sigma(\bfa, \bfb)$, the upper
hyperplane equation is $\langle -, \gamma(\gE) \rangle_{e,\tau} = 0$
where $e = f_\tau$ and $\gE$ given in Proposition \ref{FacProUpgamma}.
For facets of type 1 or 2 the hyperplane equations 
are $\beta_{ij} =  0$ for suitable $i,j$. 

Each exterior facet determines a non-negative half plane $H^+$. Since 
the forms above are non-negative
on all Betti diagrams $\beta(M)$ in $\Dab$ by
Theorem \ref{FacetThePairing}, the cone $B(\bfa, \bfb)$
is contained in the intersection of all the half planes $H^+$
which again is contained in the 
fan $\Sigma(\bfa, \bfb)$. 
\end{proof}

\section{The existence of pure free resolutions and of 
vector bundles with supernatural cohomology}
\label{ExiSek}

There are three main constructions of pure free resolutions. The
first appeared on the arXiv.org in September 2007 \cite{EFW}. This
construction works in char  $\kr  = 0$ and is 
the $GL(n)$-equivariant resolution. Then in December 2007 appeared 
the simpler but rougher construction of \cite{ES} which works in all 
characteristics. In the paper \cite{EFW} there also appeared
another construction, resolutions of modules supported on 
determinantal loci.
This construction is somewhat less celebrated but certainly deserves
more attention for its naturality and beauty.
It is a comprehensive generalization of the Eagon-Northcott
complexes and Buchsbaum-Rim complexes in a generic setting.

Quite parallel to the first two constructions of pure free resolutions,
there are analogous constructions of vector bundles on $\proj{n-1}$ 
with supernatural cohomology. These constructions are actually simpler
than the constructions of pure free resolutions, and were to some
extent known before the term supernatural cohomology was coined in 
\cite{ES}. In the following we let $V$ be a finite dimensional 
vector space and let  $S$ be the symmetric algebra $S(V)$ with unique graded 
maximal ideal $\mm$.

\subsection{The equivariant pure free resolution}
\label{SubsekExiEkvi}
We shall first give the construction of the $GL(V)$-equivariant pure resolution
of type $(1, 1, \ldots, 1)$ and more generally of type 
$(r, 1, \ldots, 1)$ for $r \geq 1$. These cases are known classically,
and provide the hint for how to search for equivariant pure resolutions
of any type $\bfd$. 

\subsubsection{Pure resolutions of type $(1, 1, \ldots, 1)$.} 
In this case the resolution
is the Koszul complex
\[ S \vpil S \tek V \vpil S \tek \wedge^2 V \vpil \cdots \vpil 
S \tek \wedge^n V \]
which is a resolution of the module  $\kr = S/\mm$.
(We consider $V$ to have degree $1$, and $\wedge^p V$ to have
degree $p$.)
The general linear linear group $GL(V)$ acts on each term $S \tek \wedge^p V$
since it acts on $S$ and $\wedge^p V$. And the differentials
respect this action so they are maps of $GL(V)$-modules. We say
the resolution is $GL(V)$-equivariant.
To define the differentials
note that there are $GL(V)$-equivariant maps 
$\wedge^{p+1} V \mto{\rho} V \tek \wedge^p V$. Also let 
$\mu :  S \tek V \pil S$ be the multiplication map.
The differential in the Koszul complex is then:
\[ S \tek \wedge^{p+1} V \mto{\bfen_S \te \rho }
S \tek V \tek \wedge^p V 
\mto{\mu \te \bfen} S \tek \wedge^p V. \]

\subsubsection{Pure resolutions of type $(r, 1, \ldots,1)$. }
Let us consider resolutions of type $(3,1,1)$. 

\begin{example} \label{ExiEksMr}
In $S = \kr[x_1, x_2, x_3]$ the ideal 
$\mm^3 = (x_1, x_2, x_3)^3$ has $3$-linear resolution. 
The resolution of the quotient ring (a Cohen-Macaulay module
of codimension three) is :
\[ S \vmto{d_0} S(-3)^{10} \vmto{d_1} S(-4)^{15} \vmto{d_2} S(-5)^6. \]


Looking at this complex in degree $3$, the exponent $10$ is
the third symmetric power $S_3(V)$ of $V = \langle x_1, x_2, x_3 \rangle$. 
Looking at the complex in degree $4$, we see that $15$ is the dimension 
of the kernel of the multiplication $V \tek S_3(V) \pil S_4(V)$.
This map is $GL(V)$-equivariant and the kernel is the representation 
$S_{3,1}(V)$ which has dimension $15$ (see below for references explaining
this representation). The inclusion 
\[ S_{3,1}(V) \pil V \tek S_3(V) \] induces a composition
\[ V \tek S_{3,1}(V) \pil V \tek V \tek S_3(V) \pil S_2(V) \tek S_3(V). \]
This is the map $d_1$ in degree $5$ and 
the kernel of this map is the representation $S_{3,1,1}(V)$ whose dimension
is $6$, accounting for the last term in the resolution above.
\end{example}

In general it is classically known that the 
resolution of $S/\mm^r$ is
\begin{equation} \label{ExiLigSr}
S \vpil S \tek S_r(V) \vpil S \tek S_{r,1}(V)  \vpil \cdots
\vpil S \tek S_{r,1^{n-1}}(V).
\end{equation}
This is a pure resolution of type $(r,1,1,\ldots,1)$. 

\subsubsection{Representations of $GL(V)$.}
Let us pause to give a brief explanation of the terms $ S_{r,1^{n-1}}(V)$.
The irreducible representations of $GL(V)$ where $n = \dim_\kr V$ 
are classified by partitions of integers
\[ \lambda \, : \, \lambda_1 \geq \lambda_2 \geq \cdots \geq \lambda_n. \]
For each such partition there is a representation denoted by
$S_\lambda(V)$. The details of the construction are easily found in various
textbooks like \cite{FuHa}, \cite{Ei} or \cite{We}.

Such a partition may be displayed in a Young diagram if 
$\lambda_n \geq 0$.  With row index
going downwards, put $\lambda_i$ boxes in row $i$, and align the
rows to the left. (Call this Horizontal display. Another 
convention is to display $\lambda_i$ boxes in column $i$ and top align 
them. Call this Vertical display.)

If
\[ \lambda^\prime \, : \,  \lambda_1  + a \geq \lambda_2 + a \geq \cdots \geq 
\lambda_n + a, \]
then $S_{\lambda^\prime}(V) = (\wedge^n V)^a \tek S_{\lambda}(V)$
where $\wedge^n V$ is the one-dimensional determinant representation.
In Example \ref{ExiEksMr} we needed to consider tensor products
$S_\lambda(V) \tek S_\mu(V)$. 
In char $\kr = 0$ this decomposes
into a direct sum of irreducible representations. In general this is
complicated, but in one case there is a simple rule which is of central
use for us in the construction of equivariant resolutions.

For two partitions $\mu$ and $\lambda$ with $\mu_i \geq \lambda_i$ for
each $i$, say that $\mu \backslash \lambda$ is a {\it horizontal strip}
({\it vertical strip} with Vertical display) 
if $\mu_i \leq \lambda_{i-1}$ for all $i$. Thus when removing the
diagram of $\lambda$ from that of $\mu$, no two boxes are in the
same column. 

\begin{pieri}
\[ S_r(V) \tek S_{\lambda}(V) = 
\underset{\stackrel{\mu \backslash \lambda}{
\stackrel{\mbox{is a horizontal strip}} {\mbox{with $r$ boxes}}}}
{\bigoplus} S_{\mu}(V).\]
\end{pieri}

\subsubsection{Resolutions of length two.}
Let us examine one more case where equivariant pure free resolutions are easily 
constructed: The case when $S = \kr[x,y]$.

\begin{example}
\[ S^2 \xleftarrow { \left [ \begin{matrix} 4x^3 &  3x^2y & 2xy^2 & y^3 & 0 \\ 
                                     0 & x^3 & 2x^2y & 3xy^2 & 4y^3 
\end{matrix} \right ]}
S(-3)^5
\xleftarrow{ \left [ \begin{matrix} y^2 &  0 &  0  \\ 
                                  -2xy &  y^2 & 0 \\ 
                                  x^2 & -2xy & y^2 \\
                                  0  &  x^2 & -2xy \\
                                 0 &  0 & x^2
\end{matrix} \right ]}
S(-5)^3 \]

The matrices here are chosen so that the complex is 
$GL(2)$-equivariant, but we are really free to vary the
coefficients of the first matrix as we like, in an open set,
and there will be a suitable match for the second matrix.
\end{example}

In general one may construct a $GL(2)$-equivariant complex

\begin{equation} \label{ExiLigSab} 
S \tek S_{a-1,0} \vpil S \tek S_{a+b-1,0} \vpil S \tek S_{a+b-1,a}. 
\end{equation}
This is a resolution of type $(a-1,a+b-1,2a+b-1)$. By twisting it with
$a-1$ it becomes of type $(0,b,a+b)$. 

\subsubsection{Resolutions of length three.}
Now suppose we want to construct pure resolutions of type 
$(d_0, d_1, d_2, d_3)$. Since we may twist the complex by $-r$ to get
a pure resolution of type $(d_0 + r, d_1 + r, d_2 + r, d_3 + r)$, what
really matters are the differences $e_1 = d_1 - d_0, e_2 = d_2 - d_1$, and
$e_3 = d_3 - d_2$. 
Looking at the complexes
(\ref{ExiLigSr}) and (\ref{ExiLigSab}) it seems that one should
try to construct a complex as follows
\begin{eqnarray} \label{ExiLigSla} 
S \tek S_{\lambda_1, \lambda_2, \lambda_3} & \vmto{d_1} &  
S \tek S_{\lambda_1 + e_1, \lambda_2, \lambda_3} \\ \notag
& \vmto{d_2} & S \tek S_{\lambda_1 + e_1, \lambda_2 + e_2, \lambda_3}
\vmto{d_3}  S \tek S_{\lambda_1 + e_1, \lambda_2 + e_2, \lambda_3+ e_3}. 
\end{eqnarray}
After looking at the numerics, i.e. the dimensions of the representations
and the Herzog-K\"uhl equations, there is one choice that fits exactly.
This is taking 
\[ \lambda_3 = 0, \quad \lambda_2 = e_3 -1, 
\quad \lambda_1 = (e_2 - 1) + (e_3 - 1).\]
We must then construct these complexes. To construct $d_1$ one must
chose a map
\[ S_{\la_1 + e_1, \la_2, \la_3} \pil S_{e_1} \tek S_{\la_1, \la_2, \la_3}. \]
But by Pieri's rule the first module occurs exactly once as a component
in the second tensor product. Hence there is a nonzero map as above,
unique up to a nonzero constant.  Similarly $d_2$ is given by 
\[S_{\la_1 + e_1, \la_2 + e_2, \la_3} \pil S_{e_2} \tek 
S_{\la_1 + e_1, \la_2, \la_3} \]
and again by Pieri's rule there is a nonzero such map unique up to 
a nonzero constant.  Similarly for $d_3$. Hence up to multiplying
the differentials with constants there is a unique possible such complex
(\ref{ExiLigSla}) with nonzero differentials. 
What must be demonstrated is that this is a resolution,
i.e. the only homology is the cokernel of $d_1$. And in fact this is
the challenging part.

\subsubsection{General construction of equivariant resolutions.}
To construct a pure resolution of type $(d_0,d_1, \ldots, d_n)$
in general one lets $e_i = d_i - d_{i-1}$. Let 
$\lambda_i = \sum_{j > i} (e_j - 1)$ and define the partition
\[\alpha(\bfe, i) : \la_1 + e_1, \ldots, \la_i + e_i, \la_{i+1}, \ldots, 
\la_n. \] 
\begin{theorem}[\cite{EFW}]
There is a $GL(n)$-equivariant resolution
\[ E(\bfe) : S \tek S_{\alpha(\bfe,0)} \vpil 
 S \tek S_{\alpha(\bfe,1)} \vpil \cdots \vpil
 S \tek S_{\alpha(\bfe,n)}. \]
This complex is uniquely defined up to multiplying the differentials
by nonzero constants.
\end{theorem}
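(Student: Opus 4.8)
The plan is to construct the complex $E(\bfe)$ explicitly and then verify exactness. The differentials are forced by representation theory: at each stage $i$, the partitions $\alpha(\bfe,i-1)$ and $\alpha(\bfe,i)$ differ only in the $i$-th entry, where $\alpha(\bfe,i)$ has an extra $e_i$, so by Pieri's rule the representation $S_{\alpha(\bfe,i)}(V)$ occurs exactly once (with multiplicity one) inside $S_{e_i}(V)\tek S_{\alpha(\bfe,i-1)}(V)$ — here one checks that $\alpha(\bfe,i)\backslash\alpha(\bfe,i-1)$ is indeed a horizontal strip with $e_i$ boxes, which follows from the definition $\la_j=\sum_{k>j}(e_k-1)$ since $\la_{i-1}-\la_i=e_i-1$, so $\la_{i-1}\ge\la_i+e_i-1$, i.e. the new boxes in row $i$ do not exceed what sits in row $i-1$. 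This gives a canonical (up to scalar) inclusion $S_{\alpha(\bfe,i)}(V)\hookrightarrow S_{e_i}(V)\tek S_{\alpha(\bfe,i-1)}(V)$, and composing with the multiplication map $S_{e_i}(V)=\mm^{e_i}/\mm^{e_i+1}\hookrightarrow S_{e_i}$ and then $S_{e_i}\tek S\tek S_{\alpha(\bfe,i-1)}(V)\to S\tek S_{\alpha(\bfe,i-1)}(V)$ produces the differential $d_i$. That $d_{i-1}\circ d_i=0$ can be seen by a weight/degree count or by invoking uniqueness of the relevant equivariant map, since the composite lands in a module where the target representation has multiplicity zero at the appropriate degree; this is the easy algebraic part.

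The substantive content is exactness. First I would verify the numerics: the Herzog–K\"uhl equations together with the dimension formula (hook-content or Weyl dimension formula) for $\dim_\kk S_{\alpha(\bfe,i)}(V)$ show that the Euler characteristic in each degree vanishes and that the alternating sum of Hilbert series is $p(t)/(1-t)^0$ — i.e. a polynomial — with the right degree sequence $\bfd$; this pins down that \emph{if} $E(\bfe)$ is a resolution, it is of type $\bfd$, but it does not prove acyclicity.

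For acyclicity I would proceed by induction on $n=\dim V$, splitting off the last variable. Pick a general $x\in V$ and consider the decomposition of each $S_{\alpha(\bfe,i)}(V)$ as a $GL(V/\kk x)$-module (branching from $GL_n$ to $GL_{n-1}$, which is multiplicity-free and given by interlacing partitions). The mapping cone or the hyperplane-section argument should identify the complex $E(\bfe)\tek_S S/xS$, up to filtration, with a direct sum of shifted copies of lower-dimensional complexes $E(\bfe')$ for the $(n-1)$-variable polynomial ring, which are resolutions by the inductive hypothesis. Combined with the fact that $x$ is a nonzerodivisor on the expected cokernel $M=\coker d_1$ (which holds because $M$ is Cohen--Macaulay of the right codimension once we know $E(\bfe)$ resolves \emph{something} — here one must be careful to make the argument non-circular, e.g. by first establishing a length bound and depth estimate), one concludes that $E(\bfe)$ is acyclic.

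The main obstacle is precisely this exactness step: checking that the forced differentials actually compose to an acyclic complex rather than merely a complex. The representation-theoretic branching bookkeeping is delicate, and keeping the induction non-circular (not assuming the cokernel is Cohen--Macaulay before one has proved acyclicity) requires care — one typically handles it via the acyclicity lemma (Buchsbaum--Eisenbud), verifying the rank and depth-of-ideal-of-minors conditions on $d_i$, where the depth conditions again reduce to the inductive hypothesis via general hyperplane sections. This is the hard technical heart contributed by Weyman in \cite{EFW}, and I would cite it rather than reproduce the full verification; the uniqueness-up-to-scalars assertion, by contrast, is immediate from the multiplicity-one statements furnished by Pieri's rule.
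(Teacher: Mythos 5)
Your proposal matches the paper's own (survey-level) treatment: both construct the differentials via Pieri's rule, deduce uniqueness-up-to-scalar from the multiplicity-one statement, note that the skew shapes make $d_{i-1}\circ d_i$ land in a multiplicity-zero isotypic piece so the composite vanishes, and then defer the acyclicity (``the challenging part,'' in the paper's words) to \cite{EFW}. Your added sketch of an inductive exactness argument via general hyperplane sections and the acyclicity lemma is a reasonable gloss on what \cite{EFW} does, but since the paper itself gives no exactness proof, this is the same route with a bit more flesh, not a different one.
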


\medskip

 These equivariant complexes have a canonical position as follows. Since
the complex above is equivariant for $GL(n)$ it is equivariant for the 
diagonal matrices in $GL(n)$. Hence it is a $\hele^n$-graded complex.
Fix a sequence of differences $(e_1, \ldots, e_n)$. 
Consider $\hele^n$-graded resolutions
\begin{equation}  \label{ExiLigRes}
F_0 \vpil F_1 \vpil \cdots \vpil F_n 
\end{equation}
of artinian $\hele^n$-graded modules which i) become pure when taking
total degrees, i.e. when making a new grading by the map $\hele^n \pil \hele$
given by $(a_1, \ldots, a_n) \pil \sum_1^n a_i$, and ii) such that
the differences of these total degrees are the fixed numbers
$e_1, \ldots, e_n$. 
Each $F_i = \oplus_{\bfj \in \hele^{n}} S(-\bfj)^{\beta_{i\bfj}}$. We may encode
the information of all the multigraded Betti numbers $\beta_{i\bfj}$ as
an element in the Laurent polynomial ring
$T = \hele[t_1, t_1^{-1}, \ldots, t_n, t_n^{-1}]$. Namely for $i = 0, \ldots, n$
let 
$B_i = \oplus_{\bfj \in \hele^{n}} \beta_{i\bfj} t^{\bfj}$,
which we call the Betti polynomials. Consider the lattice ($\hele$-submodule)
$L$ of $T^{n+1}$ generated by all tuples of Betti polynomials
$(B_0, \ldots, B_n)$ derived from resolutions (\ref{ExiLigRes}). 
This is in fact a $T$-submodule of $T^{n+1}$.

The Betti polynomial of the module $S \tek S_{\la}(V)$ is the Schur polynomial
$s_{\la}$ and so the tuple of the equivariant resolutions $E(\bfe)$ is 
\[ s(\bfe)  = (s_{\alpha(\bfe,0)}, s_{\alpha(\bfe,1)}, \ldots, s_{\alpha(\bfe, n)}). \]
Fl{\o}ystad shows that this tuple has a distinguished status
among tuples of Betti polynomials of $\hele^n$-graded resolutions 
of artinian modules.
\begin{theorem}[Theorem 1.2, \cite{Fl}] \label{ExiTheSchurArtin} 
Let char $\kr = 0$ and assume the 
greatest common divisor of $e_1, \ldots, e_n$ is $1$. 
The $T$-submodule $L$ 
of $T^{n+1}$ 
is a free $T$-module of rank one. The tuple 
$s(\bfe)$ is, 
up to a unit in $T$ (which is $\pm t^{\bfa}$, where $t^{\bfa}$ is a 
Laurent monomial), the unique generator of this $T$-module.
\end{theorem}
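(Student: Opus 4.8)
The plan is to prove the two containments $T\cdot s(\bfe)\subseteq L\subseteq T\cdot s(\bfe)$ and then read off the conclusion, since $T$ is an integral domain whose units are exactly the $\pm t^{\bfa}$. For the first containment I would first check that $s(\bfe)\in L$: the equivariant complex $E(\bfe)$ of the preceding theorem is a $\ZZ^{n}$-graded free resolution of the module $\coker d_{1}$, which is artinian since it has codimension $n$, and each term $S\otimes S_{\alpha(\bfe,i)}(V)$ has multigraded Hilbert series $s_{\alpha(\bfe,i)}(t_{1},\dots,t_{n})\cdot\prod_{j}(1-t_{j})^{-1}$, so its $i$-th Betti polynomial is exactly the Schur polynomial $s_{\alpha(\bfe,i)}$. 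Since $|\alpha(\bfe,i)|-|\alpha(\bfe,i-1)|=e_{i}$, the tuple $s(\bfe)$ arises from a resolution of the prescribed kind, hence lies in $L$; as $L$ is a $T$-submodule, $T\cdot s(\bfe)\subseteq L$.

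The reverse inclusion carries the real content. Let $\Fd$ be any $\ZZ^{n}$-graded resolution of an artinian module $M$ that becomes pure of some type $\bfd$, with difference sequence $\bfe$, upon passing to total degrees; write $B_{0},\dots,B_{n}$ for its Betti polynomials, so each $B_{i}$ is homogeneous of total degree $d_{i}$. I want $(B_{i})\in T\cdot s(\bfe)$. The key intermediate claim is that the $T$-module $L$ has rank one, i.e.\ that $(B_{i})$ is a $\mathrm{Frac}(T)$-multiple of $s(\bfe)$. Collapsing the grading by $t_{1}=\dots=t_{n}=t$ turns $\Fd$ into a pure resolution of type $\bfd$ of the Cohen--Macaulay artinian module $M$, and the singly graded Boij--S\"oderberg rigidity (the Herzog--K\"uhl equations) forces its Betti numbers to be a single rational multiple of $\pi(\bfd)$; since $E(\bfe)$ is itself such a pure resolution this gives $B_{i}(1,\dots,1)=c\cdot s_{\alpha(\bfe,i)}(1,\dots,1)$ with one common constant $c$. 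Upgrading this numerical identity to a genuine proportionality of polynomials is the crux, and the step I expect to be the main obstacle; I would attempt it by running the Herzog--K\"uhl-type rigidity degree-by-degree in the $\ZZ^{n}$-grading, using the multigraded Euler relations $\prod_{j}(1-t_{j})\mid\sum_{i}(-1)^{i}B_{i}$, the homogeneity of the $B_{i}$, and — essentially — the exactness of $\Fd$ rather than merely its Euler characteristic. This is the substance of \cite{Fl}, and it is also where char $\kr=0$ is genuinely used: both the construction of $E(\bfe)$ and the representation-theoretic rigidity that pins down the Betti polynomials fail in positive characteristic.

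Granting rank one, $L\subseteq\mathrm{Frac}(T)\cdot s(\bfe)$, hence $L\subseteq\{q\,s(\bfe):q\in\mathrm{Frac}(T),\ q\,s(\bfe)\in T^{n+1}\}$; since $T$ is a unique factorization domain this set equals $\tfrac1g\,T\cdot s(\bfe)$, where $g=\gcd_{i}s_{\alpha(\bfe,i)}$ is the greatest common divisor of the components of $s(\bfe)$ in $T$. Together with $T\cdot s(\bfe)\subseteq L$ this sandwiches $L$ between $T\cdot s(\bfe)$ and $\tfrac1g\,T\cdot s(\bfe)$. Here the hypothesis $\gcd(e_{1},\dots,e_{n})=1$ enters: I would isolate as a lemma that under it the Schur polynomials $s_{\alpha(\bfe,0)},\dots,s_{\alpha(\bfe,n)}$ have no nonunit common factor, so $g=1$. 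A common divisor $p$ would have leading (and trailing) monomial dividing that of each $s_{\alpha(\bfe,i)}$, hence dividing $t^{\alpha(\bfe,0)}$, and a short analysis of the partitions $\alpha(\bfe,i)$ — whose successive total degrees differ by the $e_{i}$ — rules out any nonunit $p$ once $\gcd(e_{i})=1$ (when all $e_{i}=1$ one already has $\alpha(\bfe,0)=(0,\dots,0)$ and $s_{\alpha(\bfe,0)}=1$, and the general case reduces to this kind of estimate). With $g=1$ we get $L=T\cdot s(\bfe)$, which is free of rank one over the domain $T$; $s(\bfe)$ is a generator, and any other generator differs from it by a unit of $T=\ZZ[t_{1}^{\pm1},\dots,t_{n}^{\pm1}]$, i.e.\ by $\pm t^{\bfa}$ — which is the assertion.
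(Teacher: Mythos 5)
Your high-level architecture is right and matches the natural shape of such a statement: show $T\cdot s(\bfe)\in L$, show $L$ is sandwiched between $T\cdot s(\bfe)$ and a principal fractional multiple of it, and use that the only units of $T=\ZZ[t_1^{\pm1},\dots,t_n^{\pm1}]$ are $\pm t^{\bfa}$. The first containment is correctly argued (the multigraded Hilbert series of $S\otimes S_{\alpha(\bfe,i)}(V)$ does give $B_i=s_{\alpha(\bfe,i)}$, and $L$ is a $T$-module). The UFD bookkeeping in the final step — that $\{q\in\mathrm{Frac}(T):q\,s(\bfe)\in T^{n+1}\}=\tfrac1g T$ with $g=\gcd_i s_{\alpha(\bfe,i)}$ — is also correct once rank one is granted, since $L$ is torsion-free and contains $s(\bfe)$.

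But the two steps that actually carry the theorem are both left as placeholders. The rank-one claim is the heart of the matter, and your proposed mechanism — collapse to total degree, invoke Herzog--K\"uhl to pin down $B_i(1,\dots,1)$ up to a common scalar, then ``upgrade'' this numerical identity to proportionality of the multigraded polynomials by running Herzog--K\"uhl ``degree-by-degree'' and ``using exactness'' — is not a proof and it is not clear it can be made into one. The multigraded Euler relation $\prod_j(1-t_j)\mid\sum_i(-1)^iB_i$ is a single equation in $T$, not the system of $c$ independent linear constraints that makes the singly-graded Herzog--K\"uhl argument work, so it cannot by itself force all of $(B_0,\dots,B_n)$ onto a one-dimensional $\mathrm{Frac}(T)$-line; and you do not say how exactness would be brought to bear. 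This is exactly where \cite{Fl} does its real work (exploiting the structure of multigraded, i.e.\ monomial, artinian modules with pure total-degree resolution), and you acknowledge the gap rather than close it. Likewise the lemma that $\gcd(e_1,\dots,e_n)=1$ forces $\gcd_i s_{\alpha(\bfe,i)}$ to be a unit is genuinely needed and nontrivial; the leading/trailing-monomial remark does not give it, since monomials are units in $T$ and so impose no divisibility constraint, and the ``general case reduces to this kind of estimate'' is not an argument. As written, the proposal establishes the easy containment and the reduction, but both substantive claims remain unproved.
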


\subsubsection{Generalizations} \label{ExiSssGen}
The diagram $\alpha(\bfe, 1) \backslash \alpha(\bfe, 0)$ is a horizontal
strip living only in the first row.
In \cite{SW}, S.Sam and J.Weyman consider partitions $\beta$ and $\alpha$
such that $\beta \backslash \alpha$ is a horizontal strip
(vertical strip in Vertical display). They give explicitly the minimal
free resolutions \cite[Thm. 2.8]{SW},  of the cokernel of the map 
(char $\kr = 0$):
\[ S \tek S_\beta(V) \pil S \tek S_{\alpha}(V). \]
This cokernel may no longer be a Cohen-Macaulay module. It may even
have positive rank. In the case that $\beta \backslash \alpha$ contains
boxes only in the $i$'th and $n$'th row for some $i$, they show
that the resolution is pure, \cite[Cor.2.11]{SW}.
The methods used in this paper have the advantage that they are more
direct and explicit than the inductive arguments given in 
\cite{EFW}.

More generally they give the (not necessarily minimal) free 
resolution of the cokernel of
\[ \bigoplus_i S \tek S_{\beta^i} (V) \pil S \tek S_{\alpha}(V) \]
where $\beta^i \backslash \alpha$ are horizontal strips, 
and if for each $i$ the horizontal strip lives in one row, the
resolution is minimal. 

Sam and Weyman in \cite[Section 3]{SW} also generalize the construction of 
$GL(V)$-equivariant pure resolutions to resolutions equivariant for
the symplectic and orthogonal groups.

\subsection{Equivariant supernatural bundles}
\label{SubsekExiEkvisup}
The equivariant resolution has an analog in the construction of bundles
with supernatural cohomology.
Given any ring $R$ and an $R$-module $F$,
one may for any partition $\lambda$ in a functorial way construct the 
Schur module $S_\lambda F$,
see \cite{Ei}, \cite{FuHa} or \cite{We}. In the case when $R = \kr$
and $F$ is a vector space
$V$ in char $\kr = 0$, with $GL(V)$ acting, 
the Schur modules $S_\lambda V$ give the irreducible
representations of $GL(V)$. The construction of $S_\lambda F$ respects 
localization and so for a locally free sheaf $\gE$, an algebraic vector bundle
on a scheme, 
we get Schur bundles
$S_\lambda \gE$. In particular consider the sheaf of differentials
on $\proj{n}$, the kernel of the natural map {\it ev}:
\[ 0 \vpil \opn 
\vmto{ev}  H^0 \opn(1) \tek \opn(-1) \vpil \Omega_{\proj{n}} \vpil 0. \]
We may construct Schur bundles $S_\lambda (\ompn(1))$. 

\begin{example}
The cohomology of the bundle $\ompn(1)$ is well known and it has supernatural
cohomology. It is easily computed by the long exact cohomology sequence
associated to the short exact sequence above. 
\begin{itemize}
\item In the range $1 \leq p \leq n-1$ the only 
nonvanishing cohomology $H^p \ompn(i)$ is when $p = 1$ and $i = 0$:
$H^1 \ompn \iso \kr$. 
\item $H^0 \ompn(i)$ vanishes for $i \leq 1$ 
and is nonvanishing for $i \geq 2$.
\item $H^n \ompn (i)$ vanishes for $i \geq -(n-1)$ and is non-vanishing
for $i \leq -n$. 
\end{itemize}

The root sequence of $\ompn(1)$ is $0, -2, -3, \ldots, -n$ and
its Hilbert polynomial is 
\[ \frac{1}{(n-1)!} \cdot z \cdot \prod_{i = 2}^{n} (z+i). \]
\end{example}

In general the bundle $S_\lambda (\ompn(1))$ has supernatural cohomology. 
It is standard to compute its cohomology by the Borel-Bott-Weil formula
in the theory of linear algebraic groups, \cite{Ja} or \cite{We}.
The computation of its cohomology is done explicitly in 
\cite[Section 4]{Fl2}, or in \cite[Theorem 5.6]{ES4} for the dual bundle. 
In fact the nonzero cohomology modules
 $H^p S_\lambda (\ompn(i))$ are all
irreducible representations $S_\mu V$, where $\mu$ depends on $\lambda, i$
and $p$. 

\begin{theorem} The Schur bundle $S_\lambda (\ompn(1))$ has supernatural
cohomology with root sequence 
\[ \lambda_1-1, \lambda_2 - 2,  \lambda_3 - 3, \ldots, \lambda_n - n. \]
\end{theorem}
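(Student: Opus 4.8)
The plan is to reduce the statement to a cohomology computation for the Schur bundle $S_\lambda(\Omega_{\proj{n}}(1))$ via the Borel--Bott--Weil theorem, exactly as indicated in the text preceding the statement. First I would recall the defining short exact sequence
\[ 0 \vpil \opn \vpil H^0 \opn(1) \tek \opn(-1) \vpil \Omega_{\proj{n}} \vpil 0, \]
and twist by $\opn(1)$ to present $\Omega_{\proj{n}}(1)$ as the kernel of the evaluation map $H^0\opn(1)\tek\opn \pil \opn(1)$. The bundle $\Omega_{\proj{n}}(1)$ is a homogeneous vector bundle on $\proj{n} = \GL(n+1)/P$ for a maximal parabolic $P$, associated to an irreducible representation of the Levi factor $\GL(1)\times\GL(n)$; concretely its fibre is (up to a twist) the standard representation of the $\GL(n)$ factor. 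Applying the Schur functor $S_\lambda$ commutes with this homogeneous structure, so $S_\lambda(\Omega_{\proj{n}}(1))$ is again a homogeneous bundle, associated to the irreducible $\GL(n)$-representation $S_\lambda$ tensored with an appropriate power of the $\GL(1)$-character.

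Next I would compute the cohomology $H^p\bigl(S_\lambda(\Omega_{\proj{n}}(1))(i)\bigr)$ for all $i\in\ZZ$ by Borel--Bott--Weil. Writing the associated weight, adding $\rho$, and asking when the shifted weight is regular and what its dominant $W$-conjugate is, one finds, for each $i$, at most one value of $p$ with nonvanishing cohomology, and in that case $H^pS_\lambda(\Omega_{\proj{n}}(1))(i)$ is an irreducible $\GL(n+1)$-representation $S_\mu V$, with $\mu$ determined explicitly by $\lambda$, $i$ and the position of $i$ relative to the ``jumps'' of the weight vector $(\lambda_1-1,\lambda_2-2,\ldots,\lambda_n-n)$. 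This is precisely the content carried out in \cite[Section 4]{Fl2} (and for the dual in \cite[Theorem 5.6]{ES4}), so I would cite that computation rather than redo the case analysis. The upshot is: there is exactly one nonvanishing cohomology group in each twist, it vanishes identically when $i$ lies strictly between consecutive members of $\{\lambda_1-1,\ldots,\lambda_n-n\}$ is false --- rather the non-vanishing index $p$ is the number of $z_k := \lambda_k - k$ with $z_k > i$ --- and the zero locus of the Euler characteristic is exactly $\{z_1,\ldots,z_n\}$.

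Finally I would verify the two defining conditions of supernatural cohomology. The Euler characteristic $\chi\,S_\lambda(\Omega_{\proj{n}}(1))(d)$ is a polynomial in $d$ of degree $n$ whose leading coefficient is $(\rank)/n!$ and whose roots are $z_k = \lambda_k - k$, because a homogeneous bundle of this shape has Hilbert polynomial $\frac{r^0}{n!}\prod_{k=1}^n(d-z_k)$ --- this follows by induction from the Koszul-type presentation, or directly from the Weyl dimension formula applied to $S_\mu V$, matching the case $\lambda=(1,1,\ldots,1)$ (where the root sequence $0,-2,-3,\ldots,-n$ is recorded in the example above) and interpolating. The second condition, that in the twist $d$ with $z_i > d > z_{i+1}$ the group $H^iS_\lambda(\Omega_{\proj{n}}(1))(d)$ has dimension $\frac{r^0}{n!}\prod_k|d-z_k|$, then follows because that $H^i$ is the \emph{only} nonzero cohomology in that twist, so its dimension equals $|\chi|$ by the vanishing established above. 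The main obstacle is purely the bookkeeping in the Borel--Bott--Weil step --- correctly identifying, for every integer twist $i$, the regularity of $(\text{weight})+\rho$ and the length of the Weyl element bringing it to dominant form --- and this is exactly what is done in \cite{Fl2}; everything else is formal.
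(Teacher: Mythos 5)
Your approach is essentially the one the paper itself takes: both reduce everything to the Borel--Bott--Weil computation for the homogeneous bundle $S_\lambda(\Omega_{\proj{n}}(1))$ and defer the explicit case analysis to the same references, with the supernatural structure then following from the ``at most one nonvanishing cohomology per twist'' property plus the Euler-characteristic polynomial having exactly the roots $z_k=\lambda_k-k$.

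One slip worth fixing: you claim the example in the text, with root sequence $0,-2,-3,\ldots,-n$, is the case $\lambda=(1,1,\ldots,1)$. It is not. Taking $\lambda=(1,1,\ldots,1)$ gives $S_\lambda(\Omega_{\proj{n}}(1))=\wedge^n\Omega_{\proj{n}}(1)\cong\opn(-1)$, whose root sequence is $0,-1,-2,\ldots,1-n$ (consistent with $\lambda_i-i=1-i$). The bundle in the paper's example, $\Omega_{\proj{n}}(1)$ itself, corresponds to $\lambda=(1,0,\ldots,0)$, and then $\lambda_i-i$ indeed gives $0,-2,-3,\ldots,-n$. Also, the phrase ``matching $\ldots$ and interpolating'' is not needed and not really meaningful here; once BBW gives the vanishing pattern, the identity $\chi S_\lambda(\Omega_{\proj{n}}(1))(d)=\frac{r^0}{n!}\prod_k(d-z_k)$ follows directly for each fixed $\lambda$ because $\chi$ is a degree-$n$ polynomial in $d$ with leading coefficient $\mathrm{rank}/n!$ vanishing at the $n$ distinct integers $z_1,\ldots,z_n$ (the singular twists). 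No interpolation over $\lambda$ is required or available.
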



\subsection{Characteristic free supernatural 
bundles} \label{SubsekExiChfreesup}

It is a general fact that if $\gF$ is a coherent sheaf on $\proj{N}$
and $\proj{N} \overset{\pi}{\dashrightarrow} \proj{n}$ is a projection whose
center of projection is disjoint from the support of $\gF$, then
$\pi_*(\gF)$ and $\gF$ have the same cohomology 
\[  H^i (\proj{n}, (\pi_* \gF) (p)) \iso H^i (\proj{N},\gF (p)) \]
for all $i$ and $p$. (By the projection formula \cite[p.124]{Ha},
and since one can use flasque resolutions to compute cohomology,
\cite[III.2]{Ha}.)

\begin{example}
The Segre embedding embeds $\proj{1} \times \proj{n-1}$ as a variety of 
degree $n$ into $\proj{2n-1}$. If we take a general projection
$\proj{2n-1} \dashrightarrow \proj{n}$, the line bundle 
$\gO_{\proj{1}}(-2) \te \gO_{\proj{n-1}}$ projects down to a vector bundle
of rank $n$ on $\proj{n}$ which is the sheaf of differentials
$\ompn$. In fact the cohomology of the line bundle  
$\gO_{\proj{1}}(-2) \te \gO_{\proj{n-1}}$ and its successive twists by
 $\gO_{\proj{1}} (1) \te \gO_{\proj{n-1}}(1)$
is readily computed by the 
K\"unneth formula and it is a {\it sheaf} with supernatural
cohomology. It has the same cohomology as $\ompn$, and it is 
not difficult to argue that the projection is actually this bundle.
\end{example}

This example may be generalized as follows.
The Segre embedding embeds $\proj{a} \times \proj{b}$ into
$\proj{ab + a+ b}$ as a variety of degree $\binom{a+b}{a}$. 
Consider the line bundle $\gO_{\proj{a}} (-a-1) \te \gO_{\proj{b}}$ 
on $\proj{a} \times \proj{b}$. 
The line bundle of the hyperplane divisor on $\proj{ab + a+ b}$ 
pulls back to $\gO_{\proj{a}} (1) \te \gO_{\proj{b}}(1)$
and by twisting with this line bundle, the above line bundle is
a sheaf with supernatural cohomology. 
Taking a general projection of $\proj{ab + a+b}$ to $\proj{a+b}$ 
this line bundle projects down to the bundle $\wedge^a \Omega_{\proj{a+b}}$ 
of rank $\binom{a+b}{a}$, as may be argued using Tate resolutions,
\cite[Prop. 3.4]{Fl3}.
The root sequence of this bundle is $a, a-1, \ldots, 1, -1, -2, \ldots, -b$. 
It is natural to generalize this by looking at Segre embeddings
$\proj{a_1} \times \cdots \times \proj{a_r} \inpil \proj{N}$ composed with
a general projection $\proj{N} \dashrightarrow \proj{n}$ where
$n = \sum_i a_i$. 

\begin{theorem} 
Let a root sequence be the union of sets of consecutive integers
\[ \bigcup_{i = 1}^r \{ z_i, z_i - 1, \ldots, z_i - (a_i -1) \}\]
where $z_i \geq z_{i+1} + a_i$, and 
let  $n = a_1 + a_2 + \cdots + a_r$. The line bundle
\[ p_1^* \gO_{\proj{a_1}}(-z_1 - 1) \te \cdots  \te p_{r-1}^* 
\gO_{\proj{a_{r-1}}}(-z_{r-1} - 1) \te p_r^* \gO_{\proj{a_r}}(-z_r - 1) \]
considered on the Segre embedding of $\proj{a_1} \times \cdots \times
\proj{a_r}$ has supernatural cohomology. By a general projection down
to $\proj{n}$ it projects down to a vector bundle with supernatural
cohomology of rank $\binom{n}{a_1 a_2 \cdots a_r}$ and root sequence
given above.
\end{theorem}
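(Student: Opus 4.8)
The plan is to compute all the cohomology upstairs on the product $P = \proj{a_1} \times \cdots \times \proj{a_r}$ by the K\"unneth formula, and then transport the answer down to $\proj{n}$ by a general linear projection, exactly as in the two examples preceding the statement.

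First I would treat the line bundle $\gL = \bigotimes_i p_i^*\gO_{\proj{a_i}}(-z_i-1)$ on $P$. Under the Segre embedding $P \inpil \proj{N}$ with $N = \prod_i(a_i+1) - 1$, the hyperplane bundle $\gO_{\proj{N}}(1)$ pulls back to $\bigotimes_i p_i^*\gO_{\proj{a_i}}(1)$, so the twist $\gL(p)$ computed inside $\proj{N}$ is $\bigotimes_i p_i^*\gO_{\proj{a_i}}(p - z_i - 1)$, and by K\"unneth $H^\bullet(P, \gL(p))$ is the graded tensor product of the $H^\bullet(\proj{a_i}, \gO(p-z_i-1))$. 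On $\proj{a_i}$ the line bundle $\gO(m)$ is cohomologically concentrated in degree $0$ for $m \geq 0$, in degree $a_i$ for $m \leq -a_i-1$, and is acyclic with no sections exactly for $-a_i \leq m \leq -1$; hence the $i$-th factor contributes in degree $0$ when $p \geq z_i+1$, in degree $a_i$ when $p \leq z_i - a_i$, and contributes nothing when $p$ lies in the block $\{z_i - a_i + 1, \ldots, z_i\}$. The hypothesis $z_i \geq z_{i+1} + a_i$ makes these blocks pairwise disjoint and, read in decreasing order, equal to the stated strictly decreasing root sequence. For $p$ outside every block the ``degree $a_i$'' alternative holds for an initial segment of the factors and the ``degree $0$'' alternative for the rest, so the K\"unneth product has exactly one nonzero summand and it sits in a single cohomological degree; thus $\gL(p)$ has at most one nonvanishing cohomology group for each $p$. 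A factor-by-factor Euler characteristic gives $\chi(\gL(p)) = \prod_i \binom{p - z_i - 1 + a_i}{a_i}$, a degree-$n$ polynomial whose roots are precisely the root sequence and whose leading coefficient is $1/\prod_i a_i! = \binom{n}{a_1 a_2 \cdots a_r}/n!$; comparing its sign with the single surviving cohomology dimension shows $\gL$ has supernatural cohomology with the asserted root sequence and with $r^0 = \binom{n}{a_1 a_2 \cdots a_r}$.

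Next I would push $\gL$ down to $\proj{n}$. Choosing a general linear center of projection $\Lambda \cong \proj{N-n-1}$, one has $\dim \Lambda + \dim P = N - 1 < N$, so a general $\Lambda$ is disjoint from the Segre variety $X$ (the image of $P$), and the projection restricts to a finite morphism $\pi \colon X \to \proj{n}$. The cohomology-invariance statement recalled before the theorem then gives $H^i(\proj{n}, (\pi_*\gL)(p)) \cong H^i(X, \gL(p))$ for all $i$ and $p$, so $\pi_*\gL$ is a coherent sheaf on $\proj{n}$ whose cohomology table is the supernatural one found above. It remains to see $\pi_*\gL$ is locally free: $\pi$ is a finite morphism from a smooth, hence Cohen-Macaulay, $n$-fold to the regular $n$-fold $\proj{n}$, so $\pi$ is flat by miracle flatness and $\pi_*\gL$ is a vector bundle; its rank is the degree of $\pi$, namely the number of points of $X$ cut out by a general linear subspace of complementary dimension, which is $\deg X = \binom{n}{a_1 a_2 \cdots a_r}$, matching $r^0$. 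Since ``supernatural cohomology with a prescribed root sequence'' is a condition on the cohomology table alone, $\pi_*\gL$ is the vector bundle claimed.

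The one step that is not pure bookkeeping is the local freeness of $\pi_*\gL$, i.e.\ the flatness of the general projection $\pi\colon X \to \proj{n}$ between varieties of equal dimension; this is where generality of the center and smoothness of $X$ enter. (If one wanted to identify $\pi_*\gL$ with a concrete natural bundle rather than merely \emph{some} supernatural bundle of the prescribed rank and root sequence, one would instead argue via Tate resolutions as in \cite{Fl3}; but the statement only asserts existence of such a bundle, so the flatness argument suffices.)
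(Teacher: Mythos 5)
Your proof is correct, and it is more self-contained than what the paper offers: the survey states this theorem without proof, only sketching the two lead-in examples and referring the identification of the pushforward to Tate resolutions (\cite[Prop.~3.4]{Fl3}). The upstairs computation you give (K\"unneth, disjointness of the three regimes for each factor forced by $z_i \geq z_{i+1} + a_i$, hence at most one nonvanishing $H^j$, Euler characteristic $\prod_i \binom{p-z_i-1+a_i}{a_i}$ of degree $n$ with the stated roots and leading coefficient $\bigl(\prod_i a_i!\bigr)^{-1}$) is exactly the right way to verify supernatural cohomology with the claimed rank and root sequence before projecting. The step the paper treats by identifying $\pi_*\gL$ with a concrete natural bundle, you instead handle directly by miracle flatness: the general center $\Lambda$ is disjoint from the smooth Segre variety $X$ by a dimension count, so $\pi\colon X \to \proj{n}$ is a finite surjective morphism from a Cohen--Macaulay $n$-fold to a regular $n$-fold, hence flat, hence $\pi_*\gL$ is locally free; its rank is then either read off from the leading term of the Hilbert polynomial or from $\deg X = \binom{n}{a_1 \cdots a_r}$. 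This is a genuinely different route to local freeness than the Tate-resolution argument the paper gestures at, and it buys you a shorter, elementary proof at the cost of not naming the bundle (e.g.\ as $\wedge^{a}\Omega$ or a Schur bundle in special cases) --- but since the theorem only asserts existence of \emph{some} supernatural bundle with the given rank and root sequence, that is all you need.
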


\begin{remark}
Although in the case $r = 2$ the projection is a Schur bundle $\wedge^a
\ompn$, it is no longer true for $r > 2$ that one gets twists of Schur bundles
$S_\lambda(\ompn)(p)$, as may be seen from the ranks.
\end{remark}

\subsection{The characteristic free pure resolutions}
\label{SubsekExiChfree}
In this construction of \cite[Section 5]{ES}
one starts with a complex of locally free sheaves on
a product of projective spaces $\proj{m_0} \times \proj{m_1} \times
\cdots \times \proj{m_r}$, whose terms are
direct sums of line bundles $\gO(t_0,  \ldots, t_r)$. The complex is linear
in each coordinate twist and is exact except at the start, so is a locally
free resolution. Then we successively push this complex forward by 
omitting one factor in the product of projective spaces at a time.
Each time some linear part of the complex ``collapses'', so that 
at each step we get ``multitwisted'' pure resolutions. In the end
we will have a singly twisted pure resolution on $\proj{m_0}$ of the
type we desire.

The main ingredient in the construction is the following.

\begin{proposition}[Proposition 5.3, \cite{ES}] \label{ExiProProj}
Let ${\mathcal F}$
be a sheaf on $X \times {\mathbb P}^m$, and denote by $p_1$ and $p_2$
the projections onto the factors of this product. Suppose ${\mathcal F}$ 
has a resolution
\[  p_1^*\gG_0 \sqte p_2^*\opm(- e_0) \vpil \cdots \vpil
p_1^*\gG_N \sqte p_2^*\opm(-e_N) \vpil 0. \]
where $e_0 < e_1 < \cdots < e_N$. Suppose for some $k \geq 0$
the subsequence $(e_{k+1}, \ldots, e_{k+m})$ is equal to 
$(1, 2 \ldots, m)$. Then $R^l p_{1*} \gF = 0$ for $l > 0$ and $p_{1*} \gF$ has
a resolution on $X$ of the form 
\begin{eqnarray*}
 & \gG_0 \otimes H^0 \opm(-e_0) \vpil 
& \gG_1 \otimes H^0 \opm(-e_1) \vpil \cdots \\
\vpil & \gG_k \te H^0 \opm (-e_k) \vpil & \gG_{k+m+1} \te H^m \opm(-e_{k+m+1}) \vpil
\cdots \\
\vpil & \gG_N \te H^m \opm(-e_N). & 
\end{eqnarray*}
\end{proposition}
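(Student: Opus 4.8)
The plan is to use the projection $p_1 : X \times \proj{m} \to X$ together with the hypercohomology spectral sequence of the pushforward of the given resolution. Write $\gF_\bullet$ for the resolution with $\gF_j = p_1^* \gG_j \te p_2^* \opm(-e_j)$, so $\gF$ is quasi-isomorphic to $\gF_\bullet$. First I would compute the higher direct images term by term. By the projection formula and flat base change along $p_1$ one has
\[
R^l p_{1*}\bigl(p_1^*\gG_j \te p_2^*\opm(-e_j)\bigr) \iso \gG_j \te H^l\bigl(\proj{m}, \opm(-e_j)\bigr),
\]
because the fibres of $p_1$ are copies of $\proj{m}$ and $\gG_j$ is locally free on $X$. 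So the computation reduces entirely to the classical cohomology of line bundles on $\proj{m}$: $H^0 \opm(-e)$ is nonzero exactly for $e \leq 0$, $H^m \opm(-e)$ is nonzero exactly for $e \geq m+1$, and all intermediate cohomology vanishes.

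**Key steps.** Next I would run the spectral sequence
\[
E_1^{p,q} = R^q p_{1*}\, \gF_p \Longrightarrow R^{p+q} p_{1*}\, \gF,
\]
which by the previous paragraph has $E_1^{p,q} = \gG_p \te H^q(\proj{m}, \opm(-e_p))$. The crucial bookkeeping is that for each $p$ at most one value of $q$ contributes, namely $q=0$ when $e_p \leq 0$ and $q=m$ when $e_p \geq m+1$; the hypothesis that $(e_{k+1},\ldots,e_{k+m}) = (1,2,\ldots,m)$ means precisely that the indices $p = k+1, \ldots, k+m$ contribute nothing on either row. Since the $e_j$ are strictly increasing, $e_p \leq 0$ forces $p \leq k$ and $e_p \geq m+1$ forces $p \geq k+m+1$. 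Thus the $E_1$ page is concentrated on the $q=0$ row in columns $0,\ldots,k$ and on the $q=m$ row in columns $k+m+1,\ldots,N$, with the $m-1$ ``bridge'' columns empty. I would then observe that the differentials $d_1$ on the $q=0$ row are just $\gG_\bullet \te H^0\opm(-e_\bullet)$ and likewise on the $q=m$ row, and that because the two nonzero strips of the $E_1$ page are separated horizontally by the empty bridge, no higher differential $d_r$ ($r \geq 2$) can connect them or hit anything — both strips lie in a single row, and the only possible higher differentials would have to cross the gap, which is too wide. Hence the spectral sequence degenerates at $E_2$ (indeed the complex it computes is simply the total complex obtained by splicing the two strips). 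Reading off: $R^l p_{1*}\gF = 0$ for $l \neq 0, m$ a priori, but the strip in the $q=m$ row sits in total degree $p + m \geq k+m+1+m > m$, so after accounting for where the abutment lands one sees it contributes to $R^0 p_{1*}\gF$ after the appropriate reindexing, giving exactly the claimed resolution
\[
\gG_0 \te H^0\opm(-e_0) \vpil \cdots \vpil \gG_k \te H^0\opm(-e_k) \vpil \gG_{k+m+1}\te H^m\opm(-e_{k+m+1}) \vpil \cdots \vpil \gG_N \te H^m\opm(-e_N),
\]
and $R^l p_{1*}\gF = 0$ for $l > 0$.

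**Main obstacle.** The genuine content, as opposed to bookkeeping, is justifying that the two surviving strips of the $E_1$ page assemble into a single exact complex resolving $p_{1*}\gF$ rather than merely bounding its Euler characteristic — in other words, controlling the edge maps and verifying that the ``collapse'' across the empty bridge columns really does splice the $H^0$-part and the $H^m$-part into one acyclic complex with $p_{1*}\gF$ as its only homology. One clean way to do this is to replace the spectral-sequence language by a direct argument: push forward the stupid filtration of $\gF_\bullet$ and use that $Rp_{1*}\gF_j$ is a single sheaf placed in degree $0$ (for $j \le k$) or degree $m$ (for $j \ge k+m+1$) and is acyclic (for $k < j < k+m+1$, where $\opm(-e_j)$ has no cohomology at all), so $Rp_{1*}\gF_\bullet$ is represented by the displayed complex on the nose; then exactness away from the left end follows because $\gF_\bullet$ was a resolution and $Rp_{1*}$ is exact on the acyclic middle. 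I would present it in this second form to avoid any delicate convergence or edge-map discussion.
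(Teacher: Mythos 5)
Your approach — the hypercohomology spectral sequence $E_1^{p,q} = R^q p_{1*}\gF_p \Rightarrow R^{p+q}p_{1*}\gF$, with the $E_1$ page computed termwise from $H^q(\proj{m},\opm(-e_p))$ — is exactly the route Eisenbud and Schreyer indicate, and the identification of the two nonzero strips on rows $q=0$ and $q=m$ is correct. But the central claim ``no higher differential $d_r$ ($r\geq 2$) can connect them or hit anything --- the gap is too wide, hence the spectral sequence degenerates at $E_2$'' is wrong, and it is wrong in a way that removes the heart of the argument. The gap is exactly $m$ empty columns, and a differential $d_r$ shifts a position $(p,q)$ to $(p+r,q-r+1)$; taking $r=m+1$ carries the rightmost entry of the $q=m$ strip, in position $(-(k+m+1),m)$, precisely to the leftmost entry of the $q=0$ strip, in position $(-k,0)$. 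So the gap is not too wide --- it is the exact width that permits, and indeed forces, a $d_{m+1}$. This $d_{m+1}$ is not a nuisance to be ruled out: it \emph{is} the splice map $\gG_{k+m+1}\te H^m\opm(-e_{k+m+1}) \to \gG_k\te H^0\opm(-e_k)$ appearing in the displayed resolution. Without it, the ``spliced'' complex has no map across the junction and one cannot even state, let alone prove, exactness there.

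To repair the argument: after observing that every $E_1$ entry sits in total degree $\leq 0$, so $R^np_{1*}\gF=0$ for $n>0$, use that the abutment also vanishes in negative total degree. This forces $E_2^{p,0}=0$ for $-k+1\leq p\leq -1$ and $E_2^{p,m}=0$ for $p\leq -(k+m+2)$ (interior exactness of each strip), and at the junction it forces $d_{m+1}\colon E_{m+1}^{-(k+m+1),m}\to E_{m+1}^{-k,0}$ to be injective (to kill $E_\infty^{-(k+m+1),m}$ in total degree $-k-1<0$) and, when $k>0$, surjective (to kill $E_\infty^{-k,0}$ in total degree $-k<0$). That isomorphism is exactly the statement that the spliced complex is exact at degrees $k$ and $k+1$, and $E_\infty^{0,0}=p_{1*}\gF$ identifies the cokernel at the left end. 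Note also that your ``direct'' alternative via the stupid filtration has the same gap: knowing each $Rp_{1*}\gF_j$ is concentrated in one degree shows the associated graded has the right shape, but does not by itself produce the map across the empty bridge; the splice map is a genuine connecting morphism and must be exhibited, for instance as the $d_{m+1}$.
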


The proof of this is quite short and uses the hypercohomology spectral sequence.

\begin{example}
Let $Y$ be the complete intersection of $m$ forms of type $(1,1)$ on 
$\proj{a} \times \proj{b}$.
Assume $m \geq b+d$ where $d$ is a non-negative integer. 
Let $\gF$ be the twisted structure sheaf $\gO_Y(0,d)$.
The resolution of $\gO_Y(0,d)$ is 
\begin{align*}
 & \gO(0,d)^{\alpha_0} \vpil   \gO(-1, d-1)^{\alpha_1} \vpil 
& \cdots &  \vpil &  \gO(-d,0)^{\alpha_d}&   \\ 
\vpil &  \gO(-d-1, -1)^{\alpha_{d+1}} \vpil & \cdots &   
\vpil   & \gO(-d-b, -b)^{\alpha_{d+b}}&  \\
\vpil & \gO(-d-b-1, -b-1)^{\alpha_{d+b+1}} \vpil
& \cdots &   \vpil &  \gO(-m, d-m)^{\alpha_m}.&  
\end{align*}
The first coordinate twist is the one we are interested in. If we push
this complex forward to $\proj{a}$, the above Proposition
\ref{ExiProProj}, shows that $p_* \gO_Y(0,d)$ has
a resolution
\begin{align*} 
\gO(0)^{\alpha^\prime_0} & \vpil  \gO(-1)^{\alpha^\prime_1} 
\vpil \cdots \vpil  \gO(-d)^{\alpha^\prime_d} \\
 & \vpil \gO(-d-b-1)^{\alpha^\prime_{d+b+1}}
\vpil \cdots \vpil \gO(-m)^{\alpha^\prime_{m}}. 
\end{align*}
We see that we adjusted the second coordinate twist so that we got 
a collapse in the first coordinate twist resulting in a gap from 
$d$ to $d+b+1$. We have a complex which is pure but no longer linear.
\end{example}

For the general construction, suppose we want a pure resolution of
a sheaf on $\proj{n}$ by sums of line bundles,
\begin{equation} \label{ExiLigRenO}
\opn(-d_0)^{\alpha_0} \vpil \opn(-d_1)^{\alpha_1} \vpil \cdots 
\vpil \opn(-d_n)^{\alpha_n}.
\end{equation}

  We see in the example above that we get a leap 
in twist $\gO(-d) \vpil \gO(-d-b-1)$ by pushing down omitting a factor
$\proj{b}$. The leaps in the pure resolution we want are $d_i - d_{i-1}$, 
so we consider a projective space $\proj{m_i}$ where $m_i = d_i - d_{i-1}-1 $. 
On the product $\proj{n} \times \proj{m_1} \times \cdots \times \proj{m_n}$
we let $Y$ be the complete intersection of $M = d_n$ forms of type 
$(1,1, \ldots, 1)$, and let $\gF$ be $\gO_Y(0, d_0, d_1, \ldots, d_{n-1})$.
Its resolution is 
\begin{align*}
  & \gO(0,d_0, \ldots, d_{n-1}) \vpil \gO(-1,d_0-1, &\ldots&, d_{n-1}-1 ) 
\vpil \cdots     \\
  \vpil &  \gO(-d_{n-1}, d_0 -d_{n-1}, \ldots, 0)^{\alpha^\prime} \vpil \cdots 
   & \vpil &     
\gO(-d_{n-1}-m_n, d_0 - d_{n-1}-m_n, \ldots, -m_n)^{\alpha^{\prime\prime}}  \\ 
   &  &  \vpil &   
\gO(-d_n, \ldots, -m_n - 1)^{\alpha^{\prime\prime\prime}} 
\end{align*}  
Note that when the coordinate twist corresponding to $\proj{m_i}$
varies through $0, -1, \ldots$  $-m_i, -m_i - 1$ (displayed when
$i = n$ in the second and third line above), the
first coordinate twist varies through
\begin{eqnarray*}
 -d_{i-1}, -d_{i-1} - 1, \ldots, & -d_{i-1} - m_i, & -d_{i-1} - m_i - 1 \\
& (= -d_i + 1), & (= -d_i).
\end{eqnarray*}

Hence after the projection omitting $\proj{m_i}$, only the first
twist $-d_{i-1}$ and the last $-d_i$ survive in the first coordinate.
After all the projections we get a pure resolution
consisting of sums of line bundles (\ref{ExiLigRenO}) 
on $\proj{n}$.
Taking global sections of all twists of this complex, we get a complex
\[ S(-d_0)^{\alpha_0} \vpil \cdots \vpil S(-d_n)^{\alpha_n}. \]
That this is a resolution
follows by the Acyclicity Lemma \cite[20.11]{EFW} or
may be verified by breaking (\ref{ExiLigRenO})
into short exact sequences
\[ 0 \vpil \gK_{i-1} \vpil \gO(-d_i)^{\alpha_i} \vpil \gK_i \vpil 0. \]
By descending induction on $i$ starting from $i = n$ one easily checks
that there are exact sequences of graded modules
\[ 0 \vpil \Gamma_* \gK_{i-1} \vpil S(-d_i)^{\alpha_i} 
\vpil \Gamma_*\gK_i \vpil 0. \]

\subsubsection{Generalizations}
In \cite{BEKS2} this method of collapsing part of the complex
by suitable projections is generalized considerably. 
They construct wide classes of multilinear complexes
from tensors $\phi$ in $R^a \te R^{b_1} \te \cdots \te R^{b_n}$
where $R$ is a commutative ring, and weights $w$ in $\hele^{n+1}$  
(the twist $(0,d_0, d_1, \ldots, d_{n-1})$ above is such a weight).
In a generic setting these are resolutions generalizing many known
complexes, like for instance Eagon-Northcott and Buchsbaum-Rim
complexes which arise from a $2$-tensor (a matrix).

In particular, Theorem 1.9 of \cite{BEKS2} provides infinitely many
new families of pure resolutions of type $\bfd$ for any degree sequence $\bfd$.
The essential idea is that given a degree sequence, say $(0,4,7)$, the
integers in the complement 
\[ \cdots, -3, -2, -1, 1, 2, 3, 5, 6, 8, 9, 10, \cdots \]
may be partitioned in many ways into sequences of successive integers, and by
cleverly adjusting the construction above, all twists in such sequences
may be collapsed.

 The paper also give explicit constructions of the differentials
of the complexes, and in particular
of those in the resolutions constructed above by Eisenbud and Schreyer.

\subsection{Pure resolutions constructed from generic matrices}
\label{SubsekExiDeg}
We now describe the second construction of pure resolutions in \cite{EFW},
which also requires that char $\kr = 0$. It gives a comprehensive
generalization of the Eagon-Northcott and Buchsbaum-Rim complexes
in a generic setting.
\subsubsection{Resolutions of length two.}
For a map $R^{r-1} \mto{\phi} R^r$ of free modules over a commutative
ring $R$, let $m_i$ be the minor we get by deleting row $i$ in the map
$\phi_i$. The well-known Hilbert-Burch theorem says that if the ideal
$I = (m_1, \ldots, m_r)$ has depth $\geq 2$ then there is a resolution
of $R/I$
\[ R \xleftarrow{ [m_1, -m_2,  \ldots, (-1)^r m_r]}
R^r \vmto{\phi} R^{r-1}. \]
We get a generic situation if we let $F$ and $G$ be vector spaces
with bases $f_1, \ldots, f_{r-1}$ and $g_1, \ldots, g_r$ respectively, and 
set $S = \Symm (G^* \te F)$. Then $G^* \te F$ has basis $e_{ij} = g_i^* \te f_j$,
where the $g_i^*$ are a dual basis for $G^*$. We have a generic map
\[ G \tek S \overset{ [e_{ij}]}{\vpil} F \tek S(-1) . \]
The Hilbert-Burch theorem gives a resolution
\[ S \xleftarrow{[m_1, -m_2,  \ldots, (-1)^{r-1} m_r]}
 G \tek S(-r+1) \xleftarrow{ [e_{ij}]} F \tek S(-r). \]

The construction of \cite[Section 4]{EFW} generalizes this to pure
resolutions of type $(0,s,r)$ for all $0 < s < r$. 
This is a resolution
\begin{equation*}
\wedge^{r-1}F \tek \wedge^s F^* \tek S \longleftarrow   
\wedge^{r-1} F \tek \wedge^s G^* \tek S(-s) \longleftarrow
\wedge^{r-1} F \tek \wedge^{r-s} F \tek \wedge^r G^* \tek S(-r).
\end{equation*}
Note that the right map here identifies as the natural map
\[ \wedge^{r-1}F \tek \wedge^r G^* \tek \wedge^{r-s} G \tek S(-s)
\longleftarrow \wedge^{r-1} F \tek \wedge^r G^* \tek
\wedge^{r-s}F \tek S(-r). 
\]

Also note that the ranks of the modules here are different from the 
ranks of the modules in the equivariant case. For instance the rank
of the middle term in this construction is $\binom{r}{s}$ while
in the equivariant construction this rank is simply $r$. 

\begin{example}
When $r = 5$ we derive in the case $s=4$ the Hilbert-Burch complex
\[  S  \xleftarrow{[m_1, -m_2, m_3, -m_4, m_5]}
 S(-4)^5 \xleftarrow{ [e_{ij}]} S(-5)^4. \]

When $s= 3$ we get a complex
\[ S^{\binom{4}{3}} \vmto{\alpha} S(-3)^{\binom{5}{3} = \binom{5}{2}}
\vmto{\beta} S(-5)^{\binom{4}{2}}. \]
There is a natural basis of the first term $ S^{\binom{4}{3}}$ consisting
of three-sets of basis elements $\{ f_{i_1}^*, f_{i_2}^*, f_{i_3}^* \}$ of $F^*$ 
and a basis for  
$S(-3)^{\binom{5}{3}}$ consisting of three-sets of basis elements  
$\{ g_{j_1}^*, g_{j_2}^*, g_{j_3}^* \}$ of $G^*$. The entries of $\alpha$ are the 
$3 \times 3$ -minors corresponding to the columns $i_1, i_2, i_3$ and
rows $j_1, j_2, j_3$ of the $5 \times 4$ matrix $[e_{ij}]$. 
Similarly the entries of $\beta$
consists of $2 \times 2$-minors of the matrix $[e_{ij}]$.

When $s = 2$ we get a complex
\[ S^{\binom{4}{2}} \vmto{\alpha} S(-2)^{\binom{5}{2} = \binom{5}{3}}
\vmto{\beta} S(-5)^{\binom{4}{3}}, \]
and when $s = 1$ we get a complex
\[ S^{\binom{4}{1}} \vmto{\alpha} S(-1)^{\binom{5}{1} = \binom{5}{4}}
\vmto{\beta} S(-5)^{\binom{4}{4}}. \]

\end{example}

\subsubsection{Resolutions of length three and longer.}
To construct resolutions of length three one must start with a 
vector space $F$ of rank $r-2$ and a vector space $G$ of rank $r$. 
The resolution of pure type $(0,a,a+b,a+b+c = r)$ has the following form
\begin{eqnarray*}
S_{2^{c-1},1^{b-1}} F \tek S & \vmto{\alpha_1} & 
\wedge^{r-2} F \tek \wedge^{c-1} F
\tek \wedge^a G^* \tek S(-a) \\
& \vmto{\alpha_2} & \wedge^{r-2} F \tek \wedge^{b+c-1} F \tek
\wedge^{a+b} G^* \tek S(-a-b) \\
& \vmto{\alpha_3} & \wedge^{r-2} F \tek S_{2^c,1^{b-1}} F \tek \wedge^r G^*
\tek S(-r).
\end{eqnarray*}
When $b = c = 1$ this is the Eagon-Northcott complex
associated to a generic map.  When $a = c = 1$ 
it is the Buchsbaum-Rim complex, and when $a = b = 1$ we get the
third complex occurring naturally in this family as given in 
\cite[Appendix A.3]{Ei}.

\medskip
In general for a degree sequence $\bfd = (d_0, \ldots, d_c)$, 
denote $e_i = d_i - d_{i-1}$. 
One chooses $G$ of rank $r = \sum_1^c e_i$ and $F$ of rank $r-c+1$. 
Let $\gamma(\bfe, i)$ be the partition
\[ ((c-1)^{e_c - 1}, (c-2)^{e_{c-1} - 1}, \ldots, i^{e_{i+1} - 1}, 
i^{e_i}, (i-1)^{e_{i-1} - 1}, \ldots, 1^{e_1 - 1}). \]
This is the dual of the partition $\alpha(\bfe, i)$ defined in 
the equivariant case.
The terms in our complex will be
\[ H(\bfd, i) = S_{\gamma(\bfe, i)} F \tek \wedge^{d_i} G^* 
\tek S(-d_i). \]
The differentials $H(\bfd, i) \mto{\gamma_i} H(\bfd, i-1)$ 
in the complex are given by 
\begin{gather*}
 S_{\gamma(\bfe, i)} F \tek  \wedge^{d_i} G^* \tek S(-d_i) \\
 \downarrow \\
S_{\gamma(\bfe, i-1)} F \tek \wedge^{d_i - d_{i-1}} F 
\tek  \wedge^{d_i - d_{i-1}} G^* \tek \wedge^{d_{i - 1}} G^*
\tek S(-d_i) \\
 \downarrow \\
S_{\gamma(\bfe, i-1)}F \tek  \wedge^{d_{i-1}} G^* \tek S(-d_{i-1}). 
\end{gather*}

The last map is due to 
$ \wedge^{d_i - d_{i-1}} F 
\tek \wedge^{d_i - d_{i-1}} G^*$
being a summand of $\Symm(F \te G^*)_{d_i - d_{i-1}}$. 

\begin{theorem}[Theorem 0.2, \cite{EFW}]
 The complex $H(\bfd,\cdot)$ is a $GL(F) \times GL(G)$ equivariant 
pure resolution of type $\bfd$. 
\end{theorem}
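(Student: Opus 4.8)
\noindent\emph{Proof idea.}
The plan is to verify, in order: (1) $H(\bfd,\cdot)$ is a complex of free $S$-modules; (2) it is $GL(F)\times GL(G)$-equivariant; (3) it is acyclic in positive homological degrees. Granting these, purity of type $\bfd$ is forced by the shape of the terms: $H(\bfd,i)=S_{\gamma(\bfe,i)}F\tek\wedge^{d_i}G^*\tek S(-d_i)$ is free, generated entirely in degree $d_i$, and $d_0<d_1<\cdots<d_c$, so every matrix entry of a differential is a form of positive degree; hence such an acyclic complex is automatically a minimal pure resolution of type $\bfd$ (of the module $\coker\gamma_1$).

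Equivariance is built into the construction. Each term is a tensor product of Schur and exterior functors evaluated on $F$ and $G^*$ with a twist of $S=\Symm(F\te G^*)$, all of which carry canonical $GL(F)\times GL(G)$-actions, and each $\gamma_i$ is the composite of the Pieri comultiplication $S_{\gamma(\bfe,i)}F\pil S_{\gamma(\bfe,i-1)}F\tek\wedge^{d_i-d_{i-1}}F$, the comultiplication $\wedge^{d_i}G^*\pil\wedge^{d_i-d_{i-1}}G^*\tek\wedge^{d_{i-1}}G^*$, and multiplication into $S$ via the summand $\wedge^{d_i-d_{i-1}}F\tek\wedge^{d_i-d_{i-1}}G^*\sus\Symm(F\te G^*)_{d_i-d_{i-1}}$, each of which is equivariant. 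For the complex identity $\gamma_{i-1}\circ\gamma_i=0$, one observes that the partition $\gamma(\bfe,i-2)$ is obtained from $\gamma(\bfe,i)$ by deleting a skew shape of the special type produced by two successive Pieri steps, so by Pieri's rule the space of $GL(F)$-maps between the two composable $F$-pieces is at most one-dimensional; hence $\gamma_{i-1}\circ\gamma_i$ is a scalar times a fixed equivariant map, and one checks the scalar is $0$, e.g. by evaluating on a highest-weight vector, or by reducing --- via restriction to generic coordinate subspaces of $F$ and $G$ --- to the classical special cases $b=c=1$ (Eagon--Northcott), $a=c=1$ (Buchsbaum--Rim) and $a=b=1$ (the third complex of \cite[Appendix A.3]{Ei}), where it is known.

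The heart of the theorem is acyclicity, for which the natural tool is the Buchsbaum--Eisenbud acyclicity criterion \cite{Ei}: $H(\bfd,\cdot)$ is a resolution iff $\rank H(\bfd,i)=\rank\gamma_i+\rank\gamma_{i+1}$ for all $i$ and $\text{grade}\,I(\gamma_i)\ge i$ for $1\le i\le c$, where $I(\gamma_i)$ is the ideal of $(\rank\gamma_i)$-size minors of $\gamma_i$. The rank identities are pure numerics: the choices $\rank G=\sum_1^c e_i$, $\rank F=\sum_1^c e_i-c+1$ and the partitions $\gamma(\bfe,i)$ are precisely what make the integers $\dim S_{\gamma(\bfe,i)}F\cdot\binom{\rank G}{d_i}$ solve the Herzog--K\"uhl equations for $\bfd$, the numerical avatar of those identities; so this step is combinatorial bookkeeping. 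For the grade conditions, note that $\gamma_i$ is assembled $GL$-equivariantly from the tautological $(\rank F)\times(\rank G)$ matrix $\phi=[e_{k\ell}]$ over $S$; at a point $x\in\mathrm{Spec}\,S$ where $\phi(x)$ has maximal rank $\rank F$ --- i.e. $x$ off the generic determinantal variety $W=\{\rank\phi<\rank F\}$, which has codimension $\rank G-\rank F+1=c$ --- the Schur-functor-derived map $\gamma_i(x)$ also has its generic rank (exactness of Schur complexes of a surjection of vector spaces, char $\kr=0$). Hence $V(I(\gamma_i))\sus W$, so $\text{grade}\,I(\gamma_i)=\codim V(I(\gamma_i))\ge\codim W=c\ge i$, since $S$ is Cohen--Macaulay, and the criterion gives acyclicity.

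I expect the real obstacle to be the second half of the acyclicity argument: pinning down the generic ranks of the $\gamma_i$ and the containment $V(I(\gamma_i))\sus W$ requires knowing that the Schur complexes built from a surjection of bundles are split exact --- the Akin--Buchsbaum--Weyman theory of Schur complexes \cite{We} --- together with a faithful identification of the differentials $\alpha_i$ with those Schur-complex differentials; the special combinatorial form of the partitions $\gamma(\bfe,i)$, dual to the $\alpha(\bfe,i)$ of the equivariant construction, is exactly what makes this identification work. An alternative, available since char $\kr=0$, is fully representation-theoretic: $GL(F)\times GL(G)$ is reductive, so each positive-degree homology module is a semisimple equivariant $S$-module whose $GL(F)\times GL(G)$-character can be computed by the Cauchy formula and the Littlewood--Richardson rule and seen to vanish --- or, equivalently, one exhibits an equivariant filtration matching $H(\bfd,\cdot)$ against twists of the already-established equivariant resolutions $E(\bfe)$ of \cite{EFW}. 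I would attempt the Buchsbaum--Eisenbud/determinantal route first, as it stays closest to the Eagon--Northcott and Buchsbaum--Rim prototypes that $H(\bfd,\cdot)$ generalizes.
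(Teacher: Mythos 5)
The survey states this theorem only as a citation of [EFW, Theorem 0.2] and gives no proof of its own, so there is no internal argument to compare against. Taken on its own terms, your strategy---Buchsbaum-Eisenbud acyclicity applied together with the observation that the complex becomes split exact off the generic determinantal locus $W = \{\rank\phi < \rank F\}$, whose codimension is exactly $c = \rank G - \rank F + 1$---is the right framework and is, as far as I can tell, close in spirit to what [EFW] actually does, with the exactness of Schur complexes in the sense of Akin-Buchsbaum-Weyman as the key technical input, just as you anticipate.

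Two points need tightening. First, the rank identities $\rank H(\bfd,i) = \rank\gamma_i + \rank\gamma_{i+1}$ are \emph{not} ``pure numerics'' extracted from the Herzog-K\"uhl equations: the HK equations constrain the ranks of the free modules $H(\bfd,i)$, while Buchsbaum-Eisenbud needs a statement about the generic ranks of the \emph{maps} $\gamma_i$, which are not determined by the free-module ranks alone. In practice the rank identities and the grade bound $\mathrm{grade}\,I(\gamma_i) \ge i$ both fall out of the same geometric observation you make in the grade paragraph (exactness over the complement of $W$), so the presentation conflates the source of the two halves of the criterion. Second, the Pieri multiplicity-one argument only shows that $\gamma_{i-1} \circ \gamma_i$ lives in a space of equivariant maps of dimension at most one; showing the scalar is $0$ is genuine work, and your suggested reductions (highest-weight vector, or specialization to the Eagon-Northcott and Buchsbaum-Rim cases) are plausible but not carried out. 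You have correctly flagged the ``faithful identification with Schur-complex differentials'' and the $\gamma^2 = 0$ check as the cruxes, so this reads as a sound plan rather than a complete argument.
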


\section{Cohomology of vector bundles
on projective spaces}
\label{SekBunt}
In their paper \cite{ES}, Eisenbud and Schreyer also achieved
a complete classification of cohomology tables of vector bundles
on projective spaces up to a rational multiple.
This runs fairly analogous to the classification of 
Betti diagrams of Cohen-Macaulay modules up to rational multiple.
First we introduce cohomology tables of coherent sheaves and vector 
bundles, and notation related to these.

\subsection{Cohomology tables}
For a coherent sheaf $\gF$ on the projective space $\proj{m}$ 
our interest shall be 
the cohomological dimensions
\[ \gamma_{i,d}(\gF) = \dim_\kr H^i \gF(d). \] 
The indexed set $(\gamma_{i,d})_{{i = 0, \ldots,m},{d \in \hele}}$ is 
the {\it cohomology table} of $\gF$, which lives in the vector
space  ${\mathbb T} = {\mathbb D}^* = \Pi_{d \in \hele} \rat^{m+1}$ with the 
$\gamma_{i,d}$
as coordinate functions. An element in this vector space will be called
a {\it table}.

 We shall normally display
a table as follows. 
\begin{center}
\begin{tabular}{ c c c c c | c}
$\cdots$ & $\gamma_{n, -n-1}$  & $\gamma_{n, -n}$ & $\gamma_{n, -n+1}$ & 
$\cdots$ & $n$ \\ 
 & \vdots  & \vdots & \vdots & \\
$\cdots$ &  $\gamma_{1, -2}$  & $\gamma_{1, -1}$ & $\gamma_{1, 0}$ & 
$\cdots$ & $1$ \\
$\cdots$ &  $\gamma_{0, -1}$  & $\gamma_{0, 0}$ & $\gamma_{0, 1}$ & $\cdots$ & $0$ 
\\
\hline
$\cdots$  & $-1$ & $0$ & $1$ & $\cdots$ & $d \backslash i$
\end{tabular}.
\end{center}

Compared to the natural way of displaying $\gamma_{i,d}$ in 
row $i$ and column $d$, we have shifted row $i$ to the right $i$ steps.
With the above way of displaying the cohomology table, the columns
correspond to the terms in the Tate resolution 
(see  Subsection \ref{SubsekFurPro}) of the coherent sheaf $\gF$.
We write $H^i_* \gF = \oplus_{n \in \hele} H^i \gF(n)$. This is an 
$S$-module, the $i$'th cohomology module of $\gF$.

\begin{example}

The cohomology table of the ideal sheaf of two points in 
$\proj{2}$ is 

\begin{center}
\begin{tabular}{c c c c c c c c c c} 
$\cdots$ & 6 & 3 & 1 & $\lfloor$0 & 0 & 0 & 0 & $\cdots$ \\ 
$\cdots$ & 2 & 2 & 2 & 2 & 1 & $\lfloor$0 & 0 & $\cdots$ \\
$\cdots$ & 0 & 0 & 0 & 0 & 1 & 3 & 8 & $\cdots$ \\
\hline
 $\cdots$ & -3 & -2 & -1 & 0 & 1 & 2 & 3 & $\cdots$
\end{tabular}
\end{center}

In this table there are in the two upper rows two distinguished
corners with $0$, indicated with a $\lfloor$, 
such that the quadrant determined by it only 
consists of zeroes. The $0$ in  the $H^1$ -row is in the column
labelled by $2$ so it is in cohomological degree $z_1 = 2- 1$.
The $0$ in the $H^2$-row is in the column labelled by $0$
so its degree is $z_2 = 0 - 2$. The sequence 
$z_1, z_2$ is called the root sequence of the 
cohomology table.
\end{example}

Recall that the classical Castelnuovo-Mumford regularity of a coherent
sheaf $\gF$ is defined by
\[ r = \inf \{ m | H^i \gF (m-i) = 0 \mbox{ for } i \geq 1 \}. \]

\begin{definition} For $p \geq 1$ the $p$-{\it regularity}  of a coherent
sheaf is defined to be
\[ r_p = \inf \{ d | H^i \gF(m-i) = 0 \mbox{ for } i \geq p, m \geq d \}. \] 
(Is is not difficult to show that the numbers  $r_1$ and $r$ are the same.)  
The {\it root sequence} of $\gF$ is $z_p = r_p - p$ for $p \geq 1$.
\end{definition}
(Eisenbud and Schreyer call in \cite{ES3} the $z_p$ the regularity sequence, 
but by private communication from Schreyer the notions of root sequence
and regularity sequence were mixed up in that paper.)

\begin{example} \label{BuntEksCoh}
Let $\gE$ be the vector bundle on ${\mathbb P}^3$
which is the cohomology of the complex
\[ \op{3}  \overset{[x_0, x_1, x_2^2, x_3^2]}{\vpil}
\op{3}(-1)^2 \oplus \op{3}(-2)^2 
\overset{ [-x_2^2, -x_3^2, x_0, x_1]^t} {\vpil}
\op{3}(-3). 
\]
The cohomology table of this is 
\begin{center}
\begin{tabular}{c c c c c c c c c| c}
$\cdots$ & 21 & 7  & 1 & 0 &0 &0 & 0 &  $\cdots$ & 3\\ 
$\cdots$ & 0$\rceil$ & 1 & 2& 1 &0 &0 &0 &  $\cdots$ & 2\\ 
$\cdots$ & 0 & 0 & 0$\rceil$ & 1 & 2 & 1 & 0 &  $\cdots$ & 1\\
$\cdots$ & 0 & 0 & 0 & 0$\rceil$ & 1 & 7 & 21 &  $\cdots$ & 0\\
\hline
$\cdots$ &  -2 & -1 & 0 & 1 & 2 & 3 & 4 & $\cdots$ & $d \backslash i$
\end{tabular}.
\end{center}

\end{example}

For a vector bundle $\gE$ all the intermediate cohomology modules
$H^i_* \gE$ have finite length for $i = 1, \ldots, m-1$. 
Also $H^0 \gE(d)$ vanishes for $d \ll 0$. Hence in this
case it is for rows $0,1,\ldots, m-1$ also meaningful to speak of 
the corners with $0$, extending
downwards and to the left, indicated by $\rceil$ in the 
diagram above.

\subsection{The fan of cohomology tables of vector bundles}
We want to consider vector bundles whose cohomology tables live in 
a finite dimensional subspace of ${\mathbb T}$. 
Let $\zdec{m}$ be the set of strictly decreasing integer sequences
$(a_1, \ldots, a_m)$. Such sequences are called {\it root sequences}. 
This is a partially ordered set  with $\bfa \leq \bfb$ if  
$a_i \leq b_i$ for $i = 1,\ldots, m$. The interval 
$[\bfa, \bfb]_{root}$ is the set of all root sequences $\bfz$ such
that $\bfa \leq \bfz \leq \bfb$. 
We will consider vector bundles $\gE$
such that for each $i = 1, \ldots, m$ we have 
$H^i \gE(p) = 0$ for $p \geq b_i$. (This is the same as
the root sequence of $\gE$ being $\leq \bfb$. )

As shown in Example \ref{BuntEksCoh},
for a vector bundle we may also bound below the ranges of the cohomology
modules $H^i_* \gE$ for $i = 0, \ldots, m-1$, and
we assume that $H^i \gE(p) = 0$ for $p \leq a_{i+1}$ for $i = 0, \ldots, m-1$. 
In particular note that $b_i-1$ bounds above the supporting range
of $H^i_* \gE$ and $a_{i+1}+1$ bounds below the supporting range of 
$H^{i}_* \gE$. If $\gE$ has supernatural cohomology, the conditions
means that its root sequence is in the interval $[\bfa, \bfb]_{root}$.

\begin{definition}
$\Tab$ is the subspace of ${\mathbb T}$
consisting of all tables
such that 
\begin{itemize}
\item $\gamma_{i,d} = 0$ for $i = 1, \ldots, m$ and $d \geq b_i$. 
\item $\gamma_{i,d} = 0$ for $i = 0, \ldots, m-1$ and $d \leq a_{i+1}$. 
\item The alternating sum $\gamma_{0,d} - \gamma_{1,d} + \cdots +
(-1)^m \gamma_{m,d}$ 
is a polynomial in $d$ of degree $\leq m$ for $d \geq b_1$ and
for $d \leq a_m$.
\end{itemize}
\end{definition}

The space $\Tab$ is a finite dimensional vector space
as is easily verified, since the values of a polynomial of degree $\leq m$
is determined by any of $m+1$ successive values.
The last condition for $\Tab$ is not really canonical. The conditions are just
to
get a simply defined finite-dimensional space containing
the cohomology tables of vector bundles with supernatural cohomology
and root sequences in the interval $[\bfa, \bfb]_{root}$. 
Note that set of all positive rational
multiples of cohomology tables of vector bundles whose tables are in $\Tab$, 
forms a positive cone which we denote by $C(\bfa, \bfb)$.

\medskip

For a root sequence $\bfz : z_1 > z_2 > \cdots > z_m$ we associate a table
$\gamma^\bfz$ given by 
\[ \gamma^\bfz_{i,d} = \begin{cases} \frac{1}{m!} \Pi_{i = 1}^m |d- z_i| 
& z_i > d > z_{i+1} \\
0 & \mbox{otherwise}. \end{cases}
\]
This is the {\it supernatural table} associated to this root sequence.

\begin{lemma}
If $Z : \bfz^1 > \bfz^2 > \cdots > \bfz^r$ is a chain of root sequences, then
$\gamma^{\bfz^1}, \gamma^{\bfz^2}, \cdots ,  \gamma^{\bfz^r}$ 
are linearly independent.
\end{lemma}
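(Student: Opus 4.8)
The plan is to prove linear independence directly, by exhibiting for each $\bfz^k$ a single entry of the cohomology table at which $\gamma^{\bfz^k}$ does not vanish while $\gamma^{\bfz^1},\dots,\gamma^{\bfz^{k-1}}$ all do; a relation $\sum_{k}c_k\gamma^{\bfz^k}=0$ can then be back-substituted to force $c_r=0$, then $c_{r-1}=0$, and so on. Throughout I set $z_0=+\infty$ and $z_{m+1}=-\infty$, so that $\gamma^{\bfz}_{i,d}\ne 0$ (and in fact $>0$) exactly when $z_{i+1}<d<z_i$.

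The combinatorial point I would establish first is: if $\bfz'>\bfz$ in $\zdec{m}$, then there is an index $p\in\{1,\dots,m\}$ with $z'_p>z_p$ and $z_{p-1}\ge z_p+2$. Indeed, if $z'_1>z_1$ take $p=1$ (using $z_0=+\infty$); otherwise let $p$ be the least index with $z'_p>z_p$, so $p\ge 2$ and $z'_{p-1}=z_{p-1}$, and if $z_{p-1}$ were equal to $z_p+1$ (the only alternative to $z_{p-1}\ge z_p+2$, since $\bfz$ is strictly decreasing) then $z'_{p-1}=z_p+1>z'_p$ would contradict that $\bfz'$ is strictly decreasing. The role of the condition $z_{p-1}\ge z_p+2$ is that it puts the position $(p-1,\,z_p+1)$ inside the support of $\gamma^{\bfz}$.

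Applying this with $\bfz'=\bfz^{k-1}$ and $\bfz=\bfz^k$ (for $k=2,\dots,r$) yields an index $p_k$; I set $q_k=(p_k-1,\,z^k_{p_k}+1)$ and $q_1=(0,\,z^1_1+1)$. For every $k$ the entry of $\gamma^{\bfz^k}$ at $q_k$ is nonzero, by the choice of $p_k$ (for $k=1$ this is just that the top row of $\gamma^{\bfz^1}$ is nonzero in all columns $d>z^1_1$). On the other hand, if $l<k$ then $\bfz^l\ge\bfz^{k-1}$, hence $z^l_{p_k}\ge z^{k-1}_{p_k}>z^k_{p_k}$, i.e.\ $z^l_{p_k}\ge z^k_{p_k}+1$; since $\gamma^{\bfz^l}$ is nonzero in row $p_k-1$ only in columns $d>z^l_{p_k}$, it vanishes in the column $z^k_{p_k}+1$ of $q_k$. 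Evaluating $\sum_{k=1}^r c_k\gamma^{\bfz^k}=0$ successively at $q_r,q_{r-1},\dots,q_1$ then kills the coefficients one by one.

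I expect the delicate step to be the combinatorial lemma — specifically, realizing that $p_k$ must not be chosen at an arbitrary coordinate where $\bfz^{k-1}$ and $\bfz^k$ differ (there $\gamma^{\bfz^k}$ may be identically zero in the relevant row), but at one where $\bfz^k$ still has a gap of size $\ge 2$; the short argument above shows that such a coordinate always exists. One could alternatively refine the chain to a saturated one in $[\bfz^r,\bfz^1]_{root}$ — where consecutive root sequences differ by $1$ in a single coordinate, so that the gap condition is automatic — and then use that a sublist of a linearly independent list is linearly independent.
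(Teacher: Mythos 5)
Your argument is correct. The paper states this lemma without proof, so there is no proof to compare against, but your back-substitution is precisely the analog of how the paper proves the corresponding linear-independence statement for pure Betti diagrams in Proposition \ref{BSTProLin}: find, for each element $\bfz^k$ of the chain, a coordinate $q_k$ at which $\gamma^{\bfz^k}$ is nonzero while all $\gamma^{\bfz^l}$ with $l<k$ vanish, and peel off $c_r,c_{r-1},\dots$ in turn. The one genuinely new ingredient you needed is the gap condition $z_{p-1}\geq z_p+2$, since unlike $\pi(\bfd)$ the table $\gamma^{\bfz}$ has row supports that are intervals rather than single positions; your short combinatorial argument for the existence of such a $p$ is correct (though the sentence ``$z'_{p-1}=z_p+1>z'_p$ would contradict \dots'' is phrased awkwardly --- the actual contradiction is that $z'_p>z_p$ forces $z'_p\geq z_p+1=z'_{p-1}$, violating strict decrease). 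The concluding remark that one could instead refine to a saturated chain, where consecutive root sequences differ by $1$ in a single coordinate so the gap is automatic, is also a sound alternative and matches the maximal-chain framing used in the Betti-diagram case.
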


Hence these supernatural tables
span a simplicial cone $\sigma(Z)$ in ${\mathbb T}$.

\begin{proposition} The set of simplicial cones $\sigma(Z)$ where
$Z$ ranges over the chains $Z : \bfz^1 < \bfz^2 < \cdots < \bfz^r$
in $[\bfa, \bfb]_{\dec}$, form a simplicial fan in $\Tab$ which we denote as 
$\Gamma(\bfa, \bfb)$. 
\end{proposition}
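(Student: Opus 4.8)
The plan is to imitate, essentially line for line, the proof already given that the simplicial cones $\sigma(D)$ assemble into the \BS-fan $\Sigma(\bfa,\bfb)$, with the role of the lower bound $\ubfd(\beta)$ of a Betti diagram now played by the root sequence of a cohomology table. First I would dispatch the structural points. The interval $[\bfa,\bfb]_{\dec}$ is finite, so there are only finitely many chains $Z$ and hence finitely many cones $\sigma(Z)$; each $\gamma^{\bfz}$ with $\bfz\in[\bfa,\bfb]_{\dec}$ lies in the linear subspace $\Tab$ (whose defining conditions were chosen precisely to contain these tables), hence so does the cone $\sigma(Z)$; by the preceding Lemma the generating tables $\gamma^{\bfz^i}$ are linearly independent, so $\sigma(Z)$ is a pointed simplicial cone, and its faces are exactly the cones spanned by subsets of $\{\gamma^{\bfz^1},\dots,\gamma^{\bfz^r}\}$, i.e.\ the $\sigma(Z')$ for subchains $Z'\subseteq Z$, which are again chains in $[\bfa,\bfb]_{\dec}$. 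So the collection is finite, simplicial, and closed under passing to faces, and what remains is to show that any two of its cones intersect in a common face.

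The one genuinely new ingredient is the following invariant. Recall that the $p$-regularity of the supernatural table $\gamma^{\bfz}$ attached to a root sequence $\bfz$ equals $z_p+p$ (by the definition $z_p=r_p-p$ of the root sequence of a table), and that this quantity is increasing in $\bfz$ for the partial order on root sequences. \emph{Claim: if $\gamma=\sum_{i=1}^{r}c_i\,\gamma^{\bfz^i}$ with all $c_i>0$ and $\bfz^1<\cdots<\bfz^r$ a chain, then $\gamma$ has root sequence $\bfz^r$.} Indeed, every entry of every $\gamma^{\bfz^i}$ is non-negative, so for each $i$ and each $d$ the entry $\gamma_{i,d}$ vanishes if and only if $\gamma^{\bfz^j}_{i,d}$ vanishes for all $j$; hence $r_p(\gamma)=\max_j r_p(\gamma^{\bfz^j})$, and by the monotonicity just noted this maximum is attained at the largest element $\bfz^r$ of the chain. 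Therefore $z_p(\gamma)=z_p(\bfz^r)$ for all $p$, as claimed.

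With this in hand I would run the same induction as in the proof for $\Sigma(\bfa,\bfb)$. Let $Z:\bfz^1<\cdots<\bfz^r$ and $E:\bfe^1<\cdots<\bfe^s$ be two chains and let $\gamma\in\sigma(Z)\cap\sigma(E)$, say $\gamma=\sum c_i\gamma^{\bfz^i}=\sum c_i'\gamma^{\bfe^i}$; discarding terms with zero coefficient we may assume all $c_i$ and $c_i'$ are positive. By the invariant the root sequence of $\gamma$ is simultaneously $\bfz^r$ and $\bfe^s$, so $\bfz^r=\bfe^s$. Assume without loss of generality $c_r\le c_s'$ and set $\gamma'=\gamma-c_r\gamma^{\bfz^r}$. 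Then $\gamma'\in\sigma(Z\backslash\{\bfz^r\})$, and also $\gamma'\in\sigma(E)$ because in the $E$-expansion the coefficient of $\gamma^{\bfe^s}=\gamma^{\bfz^r}$ becomes $c_s'-c_r\ge 0$. By induction on $|Z|+|E|$ (the base case being the one where $Z\backslash\{\bfz^r\}$ is empty, so that $\gamma'=0$), $\gamma'\in\sigma\big((Z\backslash\{\bfz^r\})\cap E\big)\subseteq\sigma(Z\cap E)$; since $\bfz^r=\bfe^s\in Z\cap E$ we conclude $\gamma=\gamma'+c_r\gamma^{\bfz^r}\in\sigma(Z\cap E)$. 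The reverse inclusion $\sigma(Z\cap E)\subseteq\sigma(Z)\cap\sigma(E)$ is clear because $Z\cap E$ is a subchain of each of $Z$ and $E$. Hence $\sigma(Z)\cap\sigma(E)=\sigma(Z\cap E)$, a common face of both, and the collection $\{\sigma(Z)\}$ is a simplicial fan, which we denote $\Gamma(\bfa,\bfb)$.

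The step I expect to be the main obstacle — though it is not hard once isolated — is the invariant of the second paragraph: one must be confident that the root sequence of a positive combination of supernatural tables supported on a chain reads off the top member of that chain. This rests only on the non-negativity of all cohomology-table entries and on the monotonicity of the map sending a root sequence $\bfz$ to $z_p+p$; everything else is bookkeeping identical to what was already carried out for $\Sigma(\bfa,\bfb)$.
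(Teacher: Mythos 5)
Your proof is correct and is precisely the adaptation the paper has in mind: the paper states this proposition without proof, because the argument mirrors verbatim the one already given for the simplicial fan $\Sigma(\bfa,\bfb)$, and you carry out exactly that translation, replacing the minimal degree sequence $\ubfd(\beta)$ of a Betti diagram with the root sequence of a table (thus peeling off the \emph{top} of the chain rather than the bottom). The one piece you correctly recognize as needing to be checked — that for a positive combination $\gamma=\sum c_j\gamma^{\bfz^j}$ over a chain, the root sequence of $\gamma$ equals $\bfz^r$ — follows, as you argue, from $r_p(\gamma^{\bfz})=z_p+p$, from the monotonicity of $z_p$ in the coordinatewise order, and from the non-negativity of all entries of supernatural tables, which gives $r_p(\gamma)=\max_j r_p(\gamma^{\bfz^j})$.
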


Here is the analog of Theorem \ref{ResTheGeo}.

\begin{theorem} \label{BuntTheHoved} \begin{itemize}
\item[a.] The realization of the fan $\Gamma(\bfa, \bfb)$ is contained
in the positive cone $C(\bfa, \bfb)$. 
\item[b.] The positive cone $C(\bfa, \bfb)$ is contained in the
realization of the fan $\Gamma(\bfa, \bfb)$. 
\end{itemize}

In conclusion the realization of $\Gamma(\bfa, \bfb)$ and the positive
cone $C(\bfa, \bfb)$ are equal.

\end{theorem}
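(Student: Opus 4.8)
The plan is to mirror, step for step, the proof of Theorem \ref{ResTheGeo} for Betti diagrams, exploiting the symmetry of the pairing (\ref{FacLigParing2}) between diagrams and cohomology tables.

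For part (a) I would argue exactly as for Theorem \ref{ResTheGeo}(a). Fix a chain $Z : \bfz^1 < \bfz^2 < \cdots < \bfz^r$ in $[\bfa,\bfb]_{\dec}$. By the existence results of Section \ref{ExiSek} --- either the equivariant Schur bundles $S_\lambda(\ompn(1))$ of Subsection \ref{SubsekExiEkvisup} in characteristic zero, or the characteristic-free projections of Subsection \ref{SubsekExiChfreesup} --- for each $j$ there is a vector bundle $\gE_j$ on $\proj{m}$ with supernatural cohomology and root sequence $\bfz^j$, whose cohomology table is a positive rational multiple of $\gamma^{\bfz^j}$. Given a positive rational combination $\sum_j q_j\gamma^{\bfz^j}$, one clears denominators and forms $\bigoplus_j \gE_j^{N_j}$ for suitable natural numbers $N_j$; its cohomology table is the corresponding scaling of $\sum_j q_j\gamma^{\bfz^j}$, and the defining conditions of $\gamma^{\bfz^j}$ guarantee it still lies in the window $\Tab$. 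Hence $\sigma(Z)\subseteq C(\bfa,\bfb)$, and taking the union over all chains gives $\Gamma(\bfa,\bfb)\subseteq C(\bfa,\bfb)$.

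For part (b) I would first classify the exterior facets of $\Gamma(\bfa,\bfb)$ and write down their supporting hyperplanes, in complete analogy with Section \ref{SekFacet}. A maximal chain $Z$ gives a maximal simplicial cone $\sigma(Z)$ with facets $\sigma(Z\setminus\{g\})$; such a facet is exterior precisely when $g=\bfa$, or $g=\bfb$, or the neighbours $g^-,g^+$ of $g$ in $Z$ differ from $g$ in one coordinate, or $g^-,g^+$ differ in two adjacent coordinates --- the same trichotomy as Proposition \ref{FacProFacet}. For the first three cases the supporting hyperplane in $\Tab$ is a manifestly non-negative coordinate equation $\gamma_{i,d}=0$. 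For the remaining case the key point is that, by the symmetry of (\ref{FacLigParing2}), freezing $\beta=\pi(\bfd)$ for the appropriate degree sequence $\bfd$ and the appropriate pair $(e,\tau)$ produces a linear form $\langle\pi(\bfd),-\rangle_{e,\tau}$ which --- up to sign --- is the supporting form of the exterior facet: it is positive on the supernatural table that has been removed and vanishes on all the others. This positivity/vanishing statement is the dual of Proposition \ref{FacProUplig} and is exactly what is established in \cite[Theorem 7.1]{ES}; verifying it for the correct data $\bfd,e,\tau$, and checking that nothing leaves the window $\Tab$ (in particular that the polynomial-growth condition in its definition is respected), is the main obstacle --- just as it was on the Betti side.

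Granting this, the proof concludes as for Theorem \ref{ResTheGeo}(b). Every pure resolution $\Fd$ is in particular a minimal free resolution, so Theorem \ref{FacetThePairing} gives $\langle\Fd,\gF\rangle_{e,\tau}=\langle\beta(\Fd),\gamma(\gF)\rangle_{e,\tau}\geq 0$ for every vector bundle $\gF$ on $\proj{m}$. Together with the non-negativity of the coordinate functionals $\gamma_{i,d}$, this shows that the cohomology table of every vector bundle whose table lies in $\Tab$ lies in the non-negative half-space $H^+$ determined by each exterior facet of $\Gamma(\bfa,\bfb)$. The intersection of all these half-spaces is exactly the realization of $\Gamma(\bfa,\bfb)$, so $C(\bfa,\bfb)\subseteq\Gamma(\bfa,\bfb)$; combined with part (a) this yields the asserted equality.
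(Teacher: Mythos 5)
Your overall plan is sound and matches the paper's approach in spirit — existence of supernatural bundles for part (a), and for part (b) classify exterior facets of $\Gamma(\bfa,\bfb)$, produce their supporting hyperplanes, and invoke the non-negativity of the pairing from Theorem \ref{FacetThePairing}. However, you have the assignment of facet types to equation types exactly backwards, and this is not a cosmetic labeling issue: the analogy between the Betti and cohomology sides is not the naive mirror image you assume. On the Betti side it is the \emph{two-adjacent-positions} facets (type 3 of Proposition \ref{FacProFacet}) that require the pairing, while the \emph{one-position} facets (type 2) have coordinate equations $\beta_{ij}=0$. On the cohomology side it is the opposite: as the paper's proposition and Theorem \ref{BuntTheU} state, the one-position facets (type 2) require the pairing $\langle\beta(\Fd),\gamma\rangle_{e,i}=0$ with $\Fd$ a pure resolution, and the two-adjacent-positions facets (type 3) have coordinate equations $\gamma_{i,-r}=0$. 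Your proposal claims $\gamma_{i,d}=0$ works for the one-position case and puts the pairing on the two-adjacent case.

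The asymmetry comes from the combinatorics of the extremal objects. A pure Betti diagram $\pi(\bfd)$ has a single nonzero entry per column, at $(i,d_i)$; bumping $d_i$ by one moves that entry to a position occupied by nothing else in the chain, hence a coordinate equation. A supernatural table $\gamma^{\bfz}$ instead fills whole intervals $z_i>d>z_{i+1}$ in row $i$; bumping $z_i$ by one shrinks one interval and grows the adjacent one, so the nonzero support of $\gamma^{\bfz}$ is contained in the union of the supports of $\gamma^{\bfz^-}$ and $\gamma^{\bfz^+}$ and no coordinate picks it out. (You can check this concretely with $\bfz^-=(0,-4,-5),\ \bfz=(0,-3,-5),\ \bfz^+=(0,-2,-5)$, which is exactly case 3 of the paper's Example \ref{BuntEksChains}.) Conversely, in the two-adjacent-positions case, $\gamma^{\bfz}$ acquires a one-element interval that is empty both before and after, e.g.\ the position $(2,-4)$ in case 2 of Example \ref{BuntEksChains}, giving a coordinate equation. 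So if you try to execute your step ``for the first three cases a coordinate equation suffices,'' you will find it fails for the one-position facets, and your remaining case is the one where a coordinate equation in fact already works. Fixing the proof requires swapping the two: use the pairing $\langle\pi(f),-\rangle_{e,\tau}$ (with $f$ the union of $\bfz^-,\bfz,\bfz^+$ and $e=-z_i-1$) for the one-position facets, and coordinate equations for the two-adjacent-positions facets, as in Theorem \ref{BuntTheU}.
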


Part a. is a consequence of the existence of vector bundles with 
supernatural cohomology which is treated in Subsections \ref{SubsekExiEkvisup}
and \ref{SubsekExiChfreesup}. The proof of part b. is analogous
to the proof of Theorem \ref{ResTheGeo} part b., which we
developed in Section \ref{SekFacet}. We outline this in the next
subsection and the essential part is again to find the facet
equations of $\Gamma(\bfa, \bfb)$.  

\subsection{Facet equations}

\begin{example} \label{BuntEksChains}
Let $\bfa = (0,-4, -5)$ and $\bfb = (0,-2,-4)$. 
Considering the interval $[\bfa, \bfb]_{root}$ as a partially ordered
set, its Hasse diagram is:

\vskip 4mm
\hskip 3.5cm
\scalebox{1} 
{
\begin{pspicture}(0,-1.6875)(5.5471873,1.6909375)
\psdots[dotsize=0.12,fillstyle=solid,dotstyle=o](2.781875,1.1290625)
\psline[linewidth=0.04cm](2.741875,1.0890625)(2.001875,0.2890625)
\psline[linewidth=0.04cm](2.821875,1.0890625)(3.401875,0.3290625)
\psdots[dotsize=0.12,fillstyle=solid,dotstyle=o](3.421875,0.2890625)
\psdots[dotsize=0.12,fillstyle=solid,dotstyle=o](2.001875,0.2690625)
\psline[linewidth=0.04cm](2.041875,0.2290625)(2.721875,-0.4109375)
\psdots[dotsize=0.12,fillstyle=solid,dotstyle=o](2.761875,-0.4109375)
\psline[linewidth=0.04cm](3.381875,0.2490625)(2.801875,-0.3709375)
\psline[linewidth=0.04cm](2.761875,-0.4709375)(2.761875,-1.5309376)
\psdots[dotsize=0.12,fillstyle=solid,dotstyle=o](2.761875,-1.5909375)
\usefont{T1}{ptm}{m}{n}
\rput(3.3154688,1.4790626){$(0,-2,-4)$}
\usefont{T1}{ptm}{m}{n}
\rput(4.5154686,0.5990625){$(0,-3,-4)$}
\usefont{T1}{ptm}{m}{n}
\rput(0.91546875,0.5590625){$(0,-2,-5)$}
\usefont{T1}{ptm}{m}{n}
\rput(3.9154687,-0.4609375){$(0,-3,-5)$}
\usefont{T1}{ptm}{m}{n}
\rput(3.9554687,-1.4809375){$(0,-4,-5)$}
\end{pspicture} 
}

There are two maximal chains in this diagram
\begin{eqnarray*}
Z & : & (0,-4,-5) < (0,-3,-5) < (0,-3,-4) < (0,-2,-4) \\
Y & : & (0,-4,-5) < (0,-3,-5) < (0,-2,-5) < (0,-2,-4)
\end{eqnarray*}
so the realization of the Boij-S\"oderberg fan consists of
the union of two simplicial cones of dimension four. 
Cutting it with a hyperplane, we get two tetrahedra.
(The vertices are labelled by the pure diagrams on its rays.)
\vskip 4mm
\hskip 3cm
\scalebox{1} 
{
\begin{pspicture}(0,-2.695625)(8.287188,2.695625)
\psline[linewidth=0.04cm](3.981875,2.05375)(1.761875,-1.84625)
\psline[linewidth=0.04cm](4.001875,2.03375)(3.881875,-2.26625)
\psline[linewidth=0.04cm](4.001875,1.99375)(6.161875,-1.38625)
\psline[linewidth=0.04cm,linestyle=dashed,dash=0.16cm 0.16cm](4.881875,-0.92625)(3.901875,-2.26625)
\psline[linewidth=0.04cm,linestyle=dashed,dash=0.16cm 0.16cm](4.901875,-0.96625)(6.181875,-1.40625)
\psline[linewidth=0.04cm,linestyle=dashed,dash=0.16cm 0.16cm](4.881875,-0.94625)(4.001875,2.01375)
\psline[linewidth=0.04cm,linestyle=dashed,dash=0.16cm 0.16cm](1.801875,-1.82625)(4.881875,-0.94625)
\psline[linewidth=0.04cm](1.761875,-1.82625)(3.841875,-2.26625)
\psline[linewidth=0.04cm](3.921875,-2.26625)(6.141875,-1.40625)
\usefont{T1}{ptm}{m}{n}
\rput(4.2454686,2.48375){$\pi(0,-4,-5)$}
\usefont{T1}{ptm}{m}{n}
\rput(7.005469,-1.77625){$\pi(0,-3,-4)$}
\usefont{T1}{ptm}{m}{n}
\rput(4.565469,-2.57625){$\pi(0,-2,-4)$}
\usefont{T1}{ptm}{m}{n}
\rput(0.5654687,-1.41625){$\pi(0,-2,-5)$}
\usefont{T1}{ptm}{m}{n}
\rput(6.6254687,-0.19625){$\pi(0,-3,-5)$}
\end{pspicture} 
}

There is one interior facet of the fan, while all other facets are 
exterior. The exterior facets are of three types. We give an example
of each case by giving the chain.

\begin{itemize} 
\item[1.] $Z \backslash \{ (0,-2,-4)\}$. Here we omit the maximal 
element $\bfb$. Clearly this can only be completed to a maximal chain
in one way so this gives an exterior facet.
The
nonzero values of the table $\gamma^{(0,-2, -4)}$ is 

\begin{center}
\begin{tabular}{ c c c   c c c   c c c}
$\cdots$  & * & * & *  & . & . & . & . &  $\cdots$ \\ 
$\cdots$  & . & . & .  & * & . & . & . &  $\cdots$ \\ 
$\cdots$  & . & . & .  & . & * & . & . &  $\cdots$ \\ 
$\cdots$  & . & . & .  & . & . & * & * & $\cdots$ \\ 
\hline
$\cdots$  & -4 & -3&-2 & -1 & 0 & 1 & 2  & $\cdots$ \\ 
 \end{tabular}
\end{center}
The second coordinate is changing from $\bfb$ to its predecessor. 
Hence the facet equation is $\gamma_{2,-3} = 0$, since 
$\gamma_{2,-3} $ is nonzero on $\gamma^{(0,-2,-4)}$ but vanishes on 
the other elements in $Z$.

\item[2.] $Z \backslash \{ (0,-3,-5) \}$. This chain contains 
$(0,-3,-4)$ and $(0,-4,-5)$. Clearly the only way to complete this to a
maximal chain is by including $(0,-3,-5)$, so this gives an exterior facet.
The tables associated to these root sequences has nonzero positions
as follows

\begin{center}
\begin{tabular}{ c c c   c c c   c c c}
$\cdots$  & * & * & +         & . & . & . & . &  $\cdots$ \\ 
$\cdots$  & . & . & $\sim$          & . & . & . & . &  $\cdots$ \\ 
$\cdots$  & . & . & -         & * & * & . & . &  $\cdots$ \\ 
$\cdots$  & . & . & .         & . & . & * & * & $\cdots$ \\ 
\hline
$\cdots$  & -4 & -3&-2 & -1 & 0 & 1 & 2  & $\cdots$ \\ 
 \end{tabular}
\end{center}
In column $-2$ each of $\gamma^{(0,-3,-4)}, \gamma^{(0, -3, -5)}$ and 
$\gamma^{(0, -4,-5)}$ has only one nonzero value, indicated by
a $+, \sim$ and $-$ respectively. We see that $\gamma_{2, -4}$ 
is nonzero on $\gamma^{(0,-3,-5)}$ but vanishes on the other
elements in the chain, giving the facet equation $\gamma_{2, -4} = 0$.

\item[3.] $Y \backslash \{(0,-3,-5)\}$. This chain contains $(0,-2,-5)$ and 
$(0,-4,-5)$. Clearly the only way to complete this to a
maximal chain is by including $(0,-3,-5)$, so this gives an exterior facet.
The nonzero 
cohomology groups of $\gamma^{(0,-2,-5)}$ are indicated by $*$ and $+$ 
in the following diagram, those of $\gamma^{(0,-4,-5)}$ are indicated
by $*$ and $-$, while those of the element omitted, $\gamma^{(0,-3, -5)}$, are
indicated by $*$'s, the first $+$ and the second $-$. 
\begin{center}
\begin{tabular}{ c c c  c  c c c   c c c}
$\cdots$  & * & * & *  & . & . & . & . & . & $\cdots$ \\ 
$\cdots$  & . & . & .  & + & + & . & . & . & $\cdots$ \\ 
$\cdots$  & . & . & .  & - & - & * & . & . & $\cdots$ \\ 
$\cdots$  & . & . & .  & . & . & . & * & * & $\cdots$ \\ 
 \end{tabular}
\end{center}
 
The diagram is divided into two parts. The upper part consists of all 
positions above the $*$ and $-$ positions, and the lower part below
the $*$ and $+$ positions. There will be an upper and a lower facet 
equation. 
Working it out in a way analogous to Example \ref{FacExUp}, 
the lower facet equation is given by the following
table
\begin{center}
\begin{tabular}{ c c c  c  c c c   c c c}
$\cdots$  & $0^*$ & $0^*$ & $0^*$ & 0 & 0 & 0 & 0 & 0 & $\cdots$ \\ 
$\cdots$  & 0 & 0 & 4 & $0^+$ & $0^+$ & 0 & 0 & 0 & $\cdots$ \\ 
$\cdots$  & 0 & -4 & 15 & -20  & 10 & $0^*$ & 0 & 0 & $\cdots$ \\ 
$\cdots$  & 4 & -15 & 20 &  -10  & 0 & 1  & $0^*$ & $0^*$ & $\cdots$ \\ 
 \end{tabular}
\end{center}
\end{itemize}

The meaning of the numbers turns out to be as follows. Taking the negative
of the union of the degree sequences of $z^+, z$ and $z^-$ we get
$\bfd = (0, 2, 3, 4, 5)$. The pure resolution of this type has
exactly the absolute values of the 
nonzero numbers in the bottom row as Betti numbers:
\[ S \vpil  S(-2)^{10} \vpil S(-3)^{20} \vpil S(-4)^{15} \vpil S (-5)^{4}.
\]
\end{example}

\begin{proposition}
Let $Z$ be a maximal chain in $[\bfa, \bfb]_\dec$ and $z \in Z$. 
Then $\sigma(Z \backslash \{z \})$ is an exterior facet 
of $\Gamma(\bfa, \bfb)$ if either
of the following holds.
\begin{itemize}
\item[1.] $z$ is either $\bfa$ or $\bfb$. The facet equation is 
$\gamma_{i,d} = 0$
for appropriate $i$ and $d$.
\item[2.] The root sequences of $z^-$ and $z^+$ immediately
before and after $z$ in $Z$ differ in exactly one position. So for 
some $r$ we have 
\[ z^- = (\cdots, -(r+1), \cdots), z = (\cdots, -r, \cdots), 
z^+ = (\cdots, -(r-1), \cdots). \]
\item[3.]  The root sequences of $z^-$ and $z^+$ immediately
before and after $z$ in $Z$ differ in two consecutive positions such that
for some $r$ we have 
\begin{eqnarray*}  z^- = (\cdots, -r,-(r+1), \cdots), 
&  z = & (\cdots, -(r-1), -(r+1), \cdots), \\
&  z^+ = & (\cdots, -(r-1), -r, \cdots).
\end{eqnarray*}
Letting $i$ be the position of $-(r-1)$, 
the facet equation is $\gamma_{i,-r} = 0$. 
\end{itemize}
\end{proposition}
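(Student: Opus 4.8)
The plan is to argue exactly as in the proof of Proposition~\ref{FacProFacet}, transferring that argument across the order-reversing bijection between root sequences and degree sequences. First I would note that ``$\sigma(Z\backslash\{z\})$ is an exterior facet'' is a purely order-theoretic assertion: it says that the chain $Z\backslash\{z\}$ extends to a maximal chain of $[\bfa,\bfb]_{\dec}$ in only one way. Now the map $\mathbf z\mapsto -\mathbf z=(-z_1,\ldots,-z_m)$ sends $\zdec{m}$ bijectively onto $\zinc{m}$ and reverses the coordinatewise order, giving an anti-isomorphism of posets $[\bfa,\bfb]_{\dec}\cong[-\bfb,-\bfa]_{\inc}$. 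A chain and its image under an anti-isomorphism have the same number of maximal completions, and under this bijection the three configurations listed here go over to the three configurations of Proposition~\ref{FacProFacet} (case $k$ to case $k$: endpoints to endpoints, ``differ in exactly one position'' and ``differ in two adjacent positions in a staircase'' are both preserved, with the predecessor $z^-$ and successor $z^+$ in the chain swapping roles). Hence exteriority in each of the three cases follows from the already-verified exteriority in Proposition~\ref{FacProFacet}. I would also record at the outset, using the Lemma preceding the statement, that $\sigma(Z)$ is a simplicial cone of dimension $|Z|$, so that $\sigma(Z\backslash\{z\})$ is genuinely a facet of it, and exteriority then means it lies on no other maximal cone of $\Gamma(\bfa,\bfb)$.

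Next I would pin down the facet equations claimed in Cases~1 and~3, for which the only input is the defining rule $\gamma^{\mathbf w}_{i,d}\ne 0\Leftrightarrow w_i>d>w_{i+1}$ for a supernatural table. In Case~3, write $z=(\ldots,-(r-1),-(r+1),\ldots)$ with $-(r-1)$ in position $i$; then $z_i=-(r-1)>-r>-(r+1)=z_{i+1}$, so $\gamma^{z}_{i,-r}\ne 0$. Every member $\mathbf w\ne z$ of the maximal chain $Z$ lies below $z^-$ or above $z^+$ in $Z$; in the first case $w_i\le -r$, so $w_i>-r$ fails, and in the second case $w_{i+1}\ge -r$, so $-r>w_{i+1}$ fails; either way $\gamma^{\mathbf w}_{i,-r}=0$. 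Thus the coordinate functional $\gamma_{i,-r}$ vanishes on all rays of $\sigma(Z\backslash\{z\})$ but not on the ray of $\gamma^{z}$, i.e.\ it is a facet equation. Case~1 is the same computation run against the extreme ray: if $z=\bfb$ and $i$ is the coordinate in which $\bfb$ covers its predecessor in $Z$, then $\gamma_{i,b_i-1}=0$ (every $\mathbf w\ne\bfb$ in $Z$ has $w_i\le b_i-1$, while $\gamma^{\bfb}_{i,b_i-1}\ne 0$); and if $z=\bfa$ and $i$ is the coordinate in which the successor of $\bfa$ in $Z$ covers $\bfa$, then $\gamma_{i-1,a_i+1}=0$ works by the symmetric argument.

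I do not expect a serious obstacle: as with Proposition~\ref{FacProFacet} this is bookkeeping once the linear independence Lemma is granted. The one point needing a line of care is checking that the coordinate $i$ supplied by a covering relation really produces a strictly decreasing neighbouring pair in $\bfa$ (resp.\ $\bfb$) — equivalently $a_{i-1}\ge a_i+2$ (resp.\ $b_i\ge b_{i+1}+2$), the endpoint conventions handling the extreme $i$ — so that the distinguished coordinate $a_i+1$ (resp.\ $b_i-1$) actually lies in the support of $\gamma^{z}$; this is automatic, for otherwise no strictly decreasing sequence differing from $\bfa$ (resp.\ $\bfb$) in that one position would exist, and the covering could not occur. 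Finally I would point out that Case~2 is deliberately left without a stated equation: as Example~\ref{BuntEksChains} illustrates, there the facet is supported not by a coordinate hyperplane but by two distinguished hyperplanes, an upper and a lower one, whose explicit description is the analogue of Example~\ref{FacExUp} and Proposition~\ref{FacProUplig} and belongs to the subsequent analysis of the facet equations of $\Gamma(\bfa,\bfb)$.
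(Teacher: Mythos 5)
Your proof is correct and takes exactly the route the paper has in mind. In fact the paper states this proposition without giving a proof, leaving it to the reader by analogy with Proposition~\ref{FacProFacet}; your order-reversing bijection $\bfz\mapsto-\bfz$ (which identifies $[\bfa,\bfb]_{\dec}$ with $[-\bfb,-\bfa]_{\inc}$ and correctly sends each configuration to the matching one, with predecessor and successor swapped) makes that analogy precise, and your direct verification of the coordinate facet equations in Cases~1 and~3 from the definition of the supernatural table, including the check that the required corner entry is nonzero because the covering relation forces $a_{i-1}\ge a_i+2$ (resp.\ $b_i\ge b_{i+1}+2$), is correct.
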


For facets of type 2 the description of the facet
equations are as follows.

\begin{theorem} \label{BuntTheU}
Let $Z$ be a chain giving an exterior facet of type 2, and let 
$z^-, z$ and $z^+$ be successive elements in this chain which differ
only in the $i$'th position.
Let $f$ be the degree sequence which is the union of $z^+, z$ and $z^-$
and let $\Fd$ be a pure resolution corresponding to the degree sequence
$f$. The facet equation of this exterior facet is then 
\[  \langle \beta(\Fd), \gamma \rangle_{e, i} = 0 \]
where $e = -z_i -1$. 
\end{theorem}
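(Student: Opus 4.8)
The plan is to run the analysis of Section~\ref{SekFacet} with the roles of Betti diagrams and cohomology tables interchanged. Write $Z : \bfz^1 < \cdots < \bfz^s$ for the maximal chain, let $z = \bfz^k$ be the omitted root sequence and $z^- = \bfz^{k-1}$, $z^+ = \bfz^{k+1}$ its neighbours; for a type~2 facet these differ from $z$ only in the $i$-th coordinate, with $z_i = -r$, so $e = -z_i - 1 = r-1$, and $f$ (the negative of the union $z^- \cup z \cup z^+$) is obtained from the $m-1$ coordinates common to $z^-, z, z^+$ by inserting the three consecutive integers $r-1, r, r+1$; here $m$ is the dimension of the projective space, so root sequences have $m$ entries, $f$ has $m+2$ entries, and a pure resolution of type $f$ has length $m+1$ and pairs with sheaves on $\proj{m}$. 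By Theorem~\ref{ResTheBS1} such a pure resolution $\Fd$ exists (and is automatically minimal); since the pairing (\ref{FacLigParing2}) is linear in its first argument and $\beta(\Fd)$ is a positive rational multiple of $\pi(f)$, the rule $L(\gamma) := \langle\beta(\Fd),\gamma\rangle_{e,i}$ is a well-defined nonzero linear functional on $\Tab$, independent of the choice of $\Fd$. The facet $\sigma(Z\setminus\{z\})$ is the simplicial cone spanned by the supernatural tables $\gamma^{\bfz^j}$, $j\neq k$, and every such $\bfz^j$ satisfies $\bfz^j \leq z^-$ or $\bfz^j \geq z^+$. Hence the theorem follows once we show (i) $L(\gamma^{w}) = 0$ for every root sequence $w\in\zdec{m}$ with $w\leq z^-$ or $w\geq z^+$, and (ii) $L(\gamma^{z})\neq 0$: then $L$ vanishes on the linear span of the facet, is not identically zero on $\langle\Gamma(\bfa,\bfb)\rangle$, and therefore $\{L=0\}$ is the unique supporting hyperplane of this facet. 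An analogue of Lemma~\ref{FacetLemExtend} shows this hyperplane depends only on $f$ and $i$, so the window $[\bfa,\bfb]_{\dec}$ may be enlarged at will.

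For (i) and (ii) I would argue through explicit representatives. By Subsections~\ref{SubsekExiEkvisup}--\ref{SubsekExiChfreesup} there is a vector bundle $\gE_w$ on $\proj{m}$ with supernatural cohomology and root sequence $w$, and $L(\gamma^{w})$ is, up to a positive scalar, $\langle\Fd,\gE_w\rangle_{e,i}$, which by Theorem~\ref{FacetThePairing} is $\geq 0$ for every $w$ (so $L\geq 0$ on the whole cone $C(\bfa,\bfb)$). To locate the zero set one evaluates the pairing explicitly. Since $\gE_w$ is supernatural, each twist carries at most one nonvanishing cohomology group, so on the ranges occurring in (\ref{FacLigParing2}) the alternating sums $\gamma^{w}_{\leq j,-d}$ collapse to $\pm$ the Hilbert polynomial $\chi\gE_w(-d) = \frac{r^0}{m!}\prod_{l=1}^{m}(-d-w_l)$; feeding this in, together with the closed form $\pi(f)_{i,f_i} = (-1)^{i}t\prod_{l\neq i}(f_l-f_i)^{-1}$, reduces (\ref{FacLigParing2}) to a single finite sum of divided-difference type, $\sum_{i} \pm\,\chi\gE_w(-f_i)/\prod_{l\neq i}(f_i-f_l)$, with $i$ ranging over an explicit sub-window of indices dictated by $e=r-1$. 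This is a Lagrange-interpolation expression in the degree-$m$ polynomial $d\mapsto\chi\gE_w(-d)$; it vanishes exactly when that polynomial has the extreme entries of $f$ in the sub-window among its roots, which is precisely the condition $w\leq z^-$ or $w\geq z^+$, while a single leading term survives when $w=z$, yielding (ii). The content here is essentially \cite[Theorem~7.1]{ES} (extended in \cite{ES2}; see also \cite[Theorem~3.3]{Sch}) read for a pure-diagram/supernatural-table pair with $\bfd=f$, and it is the same mechanism that produced Proposition~\ref{FacProUpgamma}.

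The main obstacle is part~(i): Theorem~\ref{FacetThePairing} gives only non-negativity, and pinning down the exact vanishing locus of the pairing on pure/supernatural pairs is the one genuinely non-formal step --- it is the combinatorial shadow of the duality between pure resolutions and supernatural bundles, and is where the explicit form of (\ref{FacLigParing2}) together with the spectral-sequence argument behind Theorem~\ref{FacetThePairing} must be used in full. A secondary point requiring care is that the prescription $e=-z_i-1$, rather than an entry of $f$ as in Proposition~\ref{FacProUplig}, is exactly what makes the telescoping land on the neighbours $z^{\pm}$ of $z$ and not on another pair.
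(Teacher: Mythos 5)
Your proposal follows the same route as the paper: the survey states Theorem~\ref{BuntTheU} without proof, relying, as it does for the analogous Proposition~\ref{FacProUplig}, on the duality between pure diagrams and supernatural tables and on the explicit vanishing computation of \cite[Theorem~7.1]{ES}. Your reduction to the two claims, the Lagrange-interpolation reading of the pairing against a supernatural table, and your correct identification of claim~(i) as the one genuinely nontrivial step are precisely the mechanism the paper implicitly invokes.
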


We may now prove Theorem \ref{BuntTheHoved} b.

\begin{proof} 
Consider a facet of type 2 of the fan
$\Gamma(\bfa, \bfb)$ associated to the root sequence $z$
and position $i$. The upper
equation is $\langle \beta(\Fd), - \rangle_{e,i} = 0$
where $e = -z_i - 1$ and $\Fd$ is given in Theorem \ref{BuntTheU}. 
For facets of type 1 or 3 the hyperplane equations 
are $\gamma_{i,d}  = 0$ for suitable $i,d$. 

Each exterior facet determines a non-negative half plane $H^+$. Since 
the forms above are non-negative
on all cohomology tables $\gamma(\gE)$ in $\Tab$ by Theorem
\ref{FacetThePairing}, the cone $C(\bfa, \bfb)$
is contained in the intersection of all the half planes $H^+$
which again is contained in the 
fan $\Gamma(\bfa, \bfb)$. 
\end{proof}

\section{Extensions to non-Cohen-Macaulay modules 
and to coherent sheaves}
\label{SekExt}
We have in Sections 1 and 2 considered Betti diagrams of 
Cohen-Macaulay modules over $S = \kr[x_1, \ldots, x_n]$ of a 
given codimension.
Shortly after Eisenbud and Schreyer proved the Boij-S\"oderberg conjectures,
Boij and S\"oderberg, \cite{BS2},  extended the theorems to the case of 
arbitrary (finitely generated and graded) 
modules over this polynomial ring. The description here is just
as complete as in the Cohen-Macaulay case.

In \cite{ES3} Eisenbud and Schreyer extended the decomposition 
algorithm for vector bundles to a decomposition algorithm for 
coherent sheaves. This cannot however be seen as a final achievement
since it does not give a way to determine if a table is the cohomology
table, up to rational multiple, of a coherent sheaf on a projective space.

\subsection{Betti diagrams of graded modules in general}
 \label{SubsekFurNC}
 The modifications
needed to extend the  Boij-S\"oderberg conjectures
(theorems actually) to graded modules in general are not great. 
Let $\hele^{\leq n+1}_{\inc}$ be the set of increasing sequences
of integers $\bfd = (d_0, \ldots, d_s)$ with $s \leq n$
and consider a partial order on this by letting
\[ (d_0, \ldots, d_s) \geq (e_0, \ldots, e_t) \]
if $s \leq t$ and $d_i \geq e_i$ when $i$ ranges from $0, \ldots, s$. 
Note that if we identify the sequence $\bfd$ with 
the sequence $(d_0, \ldots, d_n)$ where $d_{s+1}, \ldots, d_n$ are all
equal to $ +\infty$,
then this is completely natural.

Associated to  $\bfd$, 
we have a pure diagram $\pi(\bfd)$ by Subsection \ref{ResSubsecPure},
such that any Cohen-Macaulay module of codimension $s$ with 
pure resolution of type $\bfd$, will have a Betti diagram which is
a multiple of $\pi(\bfd)$. Boij-S\"oderberg
prove the following variation of Theorem \ref{ResTheBS2}
for an arbitrary module.

\begin{theorem}
Let $\beta(M)$ be the Betti diagram of a graded $S$-module $M$.
Then there exists positive rational numbers $c_i$
and a chain of sequences 
$\bfd^1 < \bfd^2 < \cdots < \bfd^p$ 
in $\hele^{\leq n+1}_{\inc}$ such that 
\[ \beta(M) = c_1 \pi(\bfd^1) + \cdots + c_p \pi (\bfd^p). \]
\end{theorem}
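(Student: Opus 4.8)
The plan is to reduce the general (non-Cohen-Macaulay) statement to the Cohen-Macaulay case already established in Theorem \ref{ResTheBS2}, exploiting the fact that a module of codimension $c$ which is \emph{not} Cohen-Macaulay has a resolution which is in some sense ``longer than necessary'' but whose numerical shape can still be controlled. The key observation is that the decomposition algorithm of Subsection \ref{ResSubsecAlg} continues to make sense verbatim: for an arbitrary diagram $\beta$ with $\beta_{ij}$ possibly nonzero in columns $0,\ldots,n$, one defines the lower bound $\ubfd(\beta) = (d_0,\ldots,d_s)$ by letting $s$ be the largest index with some nonzero entry in column $s$, and $d_i$ the minimal $j$ with $\beta_{ij}\neq 0$. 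One must first check that $\ubfd(\beta)$ is a \emph{strictly} increasing sequence when $\beta$ is the Betti diagram of a module --- this follows from minimality of the resolution, since if $d_i = d_{i+1}$ for the minimal twists then the differential $F_{i+1}\to F_i$ would have a unit entry, contradicting minimality. Then $\pi(\ubfd(\beta))$ is defined (note $\bfd \in \hele_{\inc}^{\leq n+1}$, not necessarily $\zinc{c+1}$), and we subtract the largest multiple keeping the diagram non-negative.

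**The heart of the argument** is to show this greedy subtraction terminates at the zero diagram, i.e.\ that $\beta(M)$ really is a non-negative combination of the $\pi(\bfd^i)$. The strategy I would follow is that of Boij--S\"oderberg \cite{BS2}: pass to the Herzog-K\"uhl-type linear conditions. For a module $M$ of codimension $c$ that is not Cohen-Macaulay, the Betti diagram no longer satisfies all the equations \eqref{BSTLigHK}, but one can still project onto a suitable subspace. The cleanest route is the ``dual'' characterization via positivity: combine the pairing Theorem \ref{FacetThePairing}, $\langle \Fd, \gF\rangle_{e,\tau}\geq 0$, which holds for \emph{any} minimal free resolution $\Fd$ of length $\leq n$ and any coherent sheaf $\gF$ on $\proj{n-1}$ --- not just for Cohen-Macaulay $M$ --- with a description of the exterior facets of the larger fan built from all sequences in $\hele_{\inc}^{\leq n+1}$. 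One shows that the cone of all Betti diagrams in a fixed window is contained in the intersection of the non-negative half-spaces cut out by these facet equations, and that this intersection is precisely the realized fan $\Sigma$ built from chains in $\hele_{\inc}^{\leq n+1}$; the existence direction is supplied by Theorem \ref{ResTheBS1}, since each $\pi(\bfd)$ for $\bfd$ of length $s+1$ is realized by a Cohen-Macaulay module of codimension $s$, hence by a module (of codimension $\geq c$ after a suitable direct-sum padding if one insists on fixed codimension).

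**The main obstacle** I expect is bookkeeping around the window and the partial order on $\hele_{\inc}^{\leq n+1}$: the poset is no longer a product of intervals in a single $\zinc{c+1}$, so one must verify that chains still behave well --- that the pure diagrams along a chain are linearly independent (the argument of Proposition \ref{BSTProLin} adapts, using that $\pi(\bfd^1)$ is nonzero in a position where all later $\pi(\bfd^i)$ vanish), and that two simplicial cones meet along a common face so that one genuinely gets a fan. A secondary technical point is that a diagram may ``jump'' in homological length during the algorithm: after subtracting $c_1\pi(\bfd^1)$ a whole column may be annihilated, changing $s$. One has to check that the new lower bound $\bfd^2$ is still $> \bfd^1$ in the extended order (recalling that a shorter sequence is \emph{larger}, via the $+\infty$ convention), so the chain condition is maintained and the process is monotone; this monotonicity is what forces termination, since the window is finite. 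Once these poset/fan facts are in place, the positivity of the pairings from Theorem \ref{FacetThePairing} closes the argument exactly as in the proof of Theorem \ref{ResTheBS2} at the end of Section \ref{SekFacet}.
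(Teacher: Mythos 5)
Your overall strategy --- run the greedy algorithm on the larger poset $\hele^{\leq n+1}_{\inc}$, invoke the pairing Theorem \ref{FacetThePairing} (which indeed requires no Cohen-Macaulay hypothesis), and track monotonicity of lower bounds via the $+\infty$ convention --- is the right one, and it matches what the paper attributes to \cite{BS2}. You also correctly identify the two places that need checking: strict increase of $\ubfd(\beta(M))$ (though the reason is degree growth of syzygies in a minimal resolution, not the presence of a literal unit entry), and the fact that annihilating a column when the greedy step is taken corresponds to moving \emph{up} in the extended order, which, combined with the finite window, forces termination.

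The genuine gap is in the step you dismiss as ``bookkeeping'': showing that the half-spaces cut out by the functionals $\beta_{ij}$ and $\langle -, \gE\rangle_{e,\tau}$ coincide with the realization of the extended fan. For the Cohen-Macaulay fan this was the content of Section \ref{SekFacet}, where the exterior facets were explicitly classified (types 1, 2, 3) and their equations identified as pairings. In the extended fan built from $\hele^{\leq n+1}_{\inc}$ there is a new kind of exterior facet, arising when adjacent elements $\bfd^i$ and $\bfd^{i+1}$ in a chain have different \emph{lengths} --- i.e.\ some coordinate jumps to $+\infty$. As the paper itself records in Subsection \ref{SubsubFurNC}, the supporting hyperplanes of these facets are \emph{not} pairings $\langle -,\gE\rangle_{e,\tau}$ for any actual supernatural bundle $\gE$, but \emph{limits} of such pairings as entries of the root sequence of $\gE$ tend to infinity. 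Your argument concludes only that $B_{mod}\subseteq B_{eq}$, where $B_{eq}$ is cut out by the honest pairings; to reach $B_{mod}\subseteq|\Sigma|$ you must also show $B_{eq}\subseteq|\Sigma|$, which requires exhibiting these limit functionals as the remaining facet equations and noting they are automatically non-negative on $B_{eq}$ as limits of non-negatives. Without naming and verifying these limit facets, the chain of inclusions does not close, so the termination of the greedy algorithm at the zero diagram is not actually established. A minor secondary slip: the remark about ``direct-sum padding if one insists on fixed codimension'' is not meaningful --- the codimension of a direct sum is the \emph{minimum} of the summands' codimensions --- but this occurs in a side remark about the (unneeded here) reverse containment $|\Sigma|\subseteq B_{mod}$ and does not affect the main line.
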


The algorithm for this decomposition goes exactly as the algorithm 
in Subsection \ref{ResSubsecAlg}.

\begin{example}
Let $M = \kr[x,y,z] /   (x^2, xy, xz^2)$. This is a module with 
Betti diagram
\begin{equation*}
\begin{matrix} 
0 \\ 1 \\ 2 
\end{matrix}
\left [
\begin{matrix}
1 & 0 & 0 & 0 \\
0 & 2 & 1 & 0 \\
0 & 1 & 2 & 1 
\end{matrix} \right ]
\end{equation*}
which can be decomposed as
\begin{equation*}
\frac{1}{5} \cdot \left [
\begin{matrix} 1 & 0 & 0 & 0 \\ 0 & 5 & 5 & 0 \\ 0 & 0 & 0 & 1
\end{matrix} \right ]
+ \frac{1}{10} \cdot \left[
\begin{matrix} 3 & 0 & 0 & 0 \\ 0 & 10 & 0 & 0 \\ 0 & 0 & 15 & 8
\end{matrix} \right ]
+ \frac{1}{6} \cdot \left [
\begin{matrix} 1 & 0 & 0 & 0 \\ 0 & 0 & 0 & 0 \\ 0 & 4 & 3 & 0
\end{matrix} \right ]
+ \frac{1}{3} \cdot \left [
\begin{matrix} 1 & 0 & 0 & 0 \\ 0 & 0 & 0 & 0 \\ 0 & 1 & 0 & 0
\end{matrix} \right ]
\end{equation*}
\end{example}

\subsubsection{ The cone cut out by the facet equations}
\label{SubsubFurNC} 
Let $\bfa, \bfb$ in $\zinc{n+1}$ be degree sequences of length $(n+1)$.
In the linear space $L^{HK}(\bfa, \bfb)$ we know that the 
positive cone cut out by the functionals $\beta_{ij}$
and $\langle - , \gE \rangle_{e,\tau} $,
where $e$ is an integer, $0 \leq \tau \leq n-1$, and
$\gE$ is a vector bundle on $\proj{n-1}$,
is the positive cone $B(\bfa, \bfb)$.
We may then ask what is the positive cone $B_{eq}(\bfa, \bfb)$
cut out by these functionals
in the space of {\it all} diagrams in the window $\Dab$.

Let
$B_{mod}(\bfa, \bfb)$ be the positive cone
consisting of all rational multiples of Betti diagrams 
of graded modules whose diagram is in the window $\Dab$.
Since the functionals are non-negative on all Betti diagrams
of modules, by Proposition \ref{FacetThePairing},
it is clear that $B_{mod}(\bfa, \bfb) \sus B_{eq}(\bfa, \bfb)$. 
In \cite{BS} they describe the facet equations of 
$B_{mod}(\bfa, \bfb)$. They are limits of facet equations of the type
$\langle - , \gE \rangle_{e,\tau} $ where elements in the root sequence 
of $\gE$ tend to infinity. 
This shows that also 
$B_{mod}(\bfa, \bfb) \supseteq B_{eq}(\bfa, \bfb)$.
Hence the cone $B_{eq}(\bfa, \bfb)$ in $\Dab$ cut out
by the functionals is simply $B_{mod}(\bfa, \bfb)$, 
the positive cone generated by all 
Betti diagrams of graded modules with support in in the window $\Dab$. 

When $c = n$, the exterior facets of type 1 (when removing
a minimal element), 2, and 3 in Proposition 
\ref{FacProFacet} are on unique
exterior facets of the full-dimensional 
cone $B_{mod}(\bfa, \bfb) =  B_{eq}(\bfa, \bfb)$
in $\Dab$.
The unique hyperplane equation (up to scalar) of these latter facets are given
by the $\beta_{ij}$ and the upper equation respectively, testifying to 
the naturality of these choices in Section \ref{SekFacet}.

\subsection{Cohomology of coherent sheaves}
\label{SubsekBuntKnipp}
In contrast to the case of vector bundles
the decomposition algorithm for coherent sheaves on 
projective space is not of a finite number of steps.

In order to extend the algorithm we need to 
define sheaves with supernatural cohomology. Let 
\[ \bfz : z_1 > z_2 > \cdots > z_s \]
be a sequence of integers. It will be convenient to let $z_0 = \infty$
and $z_{s+1} = z_{s+2} = \cdots= - \infty$. A coherent sheaf $\gF$ on 
$\proj{m}$ has supernatural cohomology if:
\begin{itemize}
\item[a.] The Hilbert polynomial is $\chi \gF(d) = \frac{d^0}{s!}
\cdot \Pi_{i = 1}^s (d-z_i)$ for a constant $d^0$ (which must be the
degree of $\gF$). 
\item[b.] For each $d$ let $i$ be such that $z_i > d > z_{i+1}$.
Then 
\[  H^i \gF(d) = \begin{cases}  \frac{d^0}{s!}
\cdot \Pi_{i = 1}^s |d-z_i|, & z_i > d > z_{i+1} \\
 0, & \mbox{otherwise}
\end{cases} \]
\end{itemize}

In particular we see that for each $d$ there is at most one nonvanishing
cohomology group.

The typical example of such a sheaf is a vector bundle with supernatural
cohomology living on a linear subspace $\proj{s} \sus \proj{m}$.
Let $\gamma^\bfz$ be the cohomology table of the sheaf with supernatural
cohomology with root sequence $\bfz$. 

We need to define one more notion derived from a cohomology table
of a coherent sheaf.

\begin{example} Consider the cohomology table :
 \begin{center}
\begin{tabular}{c c c c c c c c c | c}
$\cdots$ & 23 & 11 & 5 & $\lfloor 1$ & 0 & 0 & 0 &  $\cdots$ & 3 \\ 
$\cdots$ & 6 & 5 & 4 & 3 & 2 & $\lfloor 1$ & 0 &  $\cdots$ & 2\\ 
$\cdots$ & 0 & 0 & 1 & 1 & 1 & 1 & 0 &  $\cdots$ & 1\\
$\cdots$ & 0 & 0 & 0 & 0 & 1 & 3 & 8  & $\cdots$ &  0\\
\hline
$\cdots$ &  -2 & -1 & 0 & 1 & 2 & 3 & 4 & $\cdots$ & $d \backslash i$.
\end{tabular}
\end{center}
In rows $3$  and $2$ there are two distinguished corners with nonzero
values, marked with a $\lfloor$ such that in the first quadrant
determined by them, these are the only nonzero values.
In this case the root sequence is $z_1 = 4-1 = 3$, $z_2 = 4-2 = 2$ and 
$z_3 = 2-3 = -1$. We see that there is no corner position in row $1$
because $z_2 = z_1 - 1$. 
\end{example}

\begin{definition} Given a root sequence $z_1 > \cdots > z_s$. 
The position $(i,d) = (i, z_i - 1)$ is a {\it corner
position} if $z_{i+1} < z_i - 1$.
\end{definition}
 
We may verify that 
$\gamma^\bfz$ has nonzero values at each corner position.
Assume $\bfz$ is the root sequence of the cohomology table $\gamma$
of a coherent sheaf.  
Let $\alpha_r, \alpha_{r-1}, 
\ldots, \alpha_0$ be the values of the corner positions of $\gamma$, and
let $a_r, a_{r-1}, \ldots, a_0$ be the values of  the corresponding 
corner positions in $\gamma^\bfz$.

Define 
\[ q_{\bfz} = \min \{ \frac{\alpha_0}{a_0}, \cdots, \frac{\alpha_r}{a_r} \}. \]

Eisenbud and Schreyer \cite{ES3} show the following.

\begin{itemize}
\item The table $\gamma - q_{\bfz} \gamma^\bfz$ has non-negative entries.
\item The root sequence $\bfz^\prime$ of this new table is 
$<$ than the root sequence $\bfz$. 
\end{itemize}

The algorithm of Eisenbud and Schreyer is now to continue this process.
For a table $\gamma$, let $\dim \gamma$ be the largest $i$ such that row 
$i$ is nonzero.

\begin{itemize}
\item[0.] Let $s = \dim \gamma$ and $\gamma_0 = \gamma$. 
\item[1.] $\gamma_1 = \gamma_0 - q_{\bfz_0} \gamma^{\bfz^0}$ 
where $\bfz^0$ is the root sequence of $\gamma_0$. 
\item[2.] $\gamma_2 = \gamma_1 - q_{\bfz_1} \gamma^{\bfz^1}$
where $\bfz^1$ is the root sequence of $\gamma_1$.  \\
$\vdots$
\end{itemize}

In the case of vector bundles in $\Tab$ we are guaranteed that
this process stops at latest when $\bfz^i = \bfa$, and we get
the decomposition derived from the simplicial fan structure
of $\Cab$, Theorem \ref{BuntTheHoved}.
For coherent sheaves
this process gives a strictly decreasing chain of root sequences
\[ \bfz^0 > \bfz^1 > \bfz^2 > \cdots   \]
and may continue an infinite number of steps. Clearly the top value
$z^i_s$ must tend to $- \infty$ as $i$ tends to infinity.
In the end we get a table $\gamma_\infty$ where row $s$ is zero 
so $\dim \gamma_{\infty} < s$. Note that we are not guaranteed that
the entries of $\gamma_\infty$ are rational numbers.

We may repeat this process with $\gamma^\prime = \gamma_\infty$,
which has dimension strictly smaller than that of $\gamma$. 
Eisenbud and Schreyer \cite{ES3} show the following.

\begin{theorem} Let $\gamma(\gF)$ be a cohomology table of
a coherent sheaf $\gF$ on $\proj{m}$. There is a chain of root sequences
$Z$ and positive real numbers $q_{\bfz}$ for $\bfz \in Z$ such that 
\[ \gamma(\gF) = \sum q_{\bfz} \gamma^\bfz. \] 

Both $Z$ and the numbers $q_{\bfz}$ are uniquely determined by these 
conditions. The $q_{\bfz}$ are rational numbers if 
$\dim \gamma^\bfz = \dim \gamma$. 
\end{theorem}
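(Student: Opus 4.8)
The plan is to prove existence by showing the Eisenbud--Schreyer stripping procedure described just above converges, and uniqueness by a ``leading corner'' argument parallel to the one used for vector bundles in Theorem~\ref{BuntTheHoved}; the rationality clause will fall out of the first round of the algorithm.

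\emph{Existence.} Fix $s = \dim \gamma(\gF)$ and analyse one ``round'': starting from $\gamma_0 = \gamma(\gF)$, set $\gamma_{i+1} = \gamma_i - q_{\bfz^i}\gamma^{\bfz^i}$, where $\bfz^i$ is the root sequence of $\gamma_i$. By the two properties of \cite{ES3} quoted above, every $\gamma_i$ has non-negative entries and $\bfz^0 > \bfz^1 > \cdots$ strictly. Since each $\gamma_j\geq 0$ we have $\gamma_i \leq \gamma_0$ entrywise, so all $\bfz^i \leq \bfz^0$; hence if the last coordinates $z^i_s$ stayed bounded below, the $\bfz^i$ would lie in a finite set, contradicting strict descent. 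So $z^i_s \to -\infty$, and then $H^s \gamma_i(e)=0$ for all large $i$ and each fixed $e$. Because $\gamma_i \leq \gamma_0$, in every fixed position the partial sums $\sum_{j<i} q_{\bfz^j}\gamma^{\bfz^j}$ are non-decreasing and bounded, so they converge; thus $\gamma_\infty := \gamma_0 - \sum_{i} q_{\bfz^i}\gamma^{\bfz^i} = \lim_i \gamma_i$ is a well-defined non-negative table, and by the previous sentence row $s$ of $\gamma_\infty$ vanishes, i.e. $\dim \gamma_\infty < s$. Now I would induct on $s$, running the same procedure on $\gamma_\infty$; since the dimension drops by at least $1$ each round, at most $m+1$ rounds occur. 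Concatenating the (countably many) root sequences produced, and using the convention $z_{t+1}=z_{t+2}=\cdots=-\infty$ to regard a shorter root sequence as lying below a longer one, yields a genuine chain $Z$ and positive reals $q_{\bfz}$ with $\gamma(\gF) = \sum_{\bfz\in Z} q_{\bfz}\gamma^{\bfz}$.

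\emph{Rationality.} In the first round $\gamma_0$ has integer entries, each $\gamma^{\bfz^i}$ has rational entries, and $q_{\bfz^i}=\min_k \alpha_k/a_k$ is a minimum of quotients of rationals, hence rational; inductively all $\gamma_i$ in the first round are rational. Consequently every coefficient $q_{\bfz}$ attached to a root sequence of length $s$, i.e. precisely those with $\dim\gamma^{\bfz}=\dim\gamma(\gF)$, is rational. Irrational coefficients can only enter once one passes to $\gamma_\infty$, whose entries are monotone limits of rationals.

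\emph{Uniqueness.} Suppose $\gamma(\gF)=\sum_{\bfz\in Z} q_{\bfz}\gamma^{\bfz}$ with $Z$ a chain and all $q_{\bfz}>0$, and let $\bfz^*$ be the largest element of $Z$. A positive combination of supernatural tables $\gamma^{\bfz'}$ with $\bfz'<\bfz^*$ has root sequence strictly below $\bfz^*$, so adding $q_{\bfz^*}\gamma^{\bfz^*}$ makes $\bfz^*$ the root sequence of the whole sum; hence $\bfz^*=\rho(\gamma(\gF))$ is forced. Evaluating at each corner position $(j,z^*_j-1)$ of $\gamma^{\bfz^*}$ gives $q_{\bfz^*}\leq \alpha_j/a_j$, while if $q_{\bfz^*}$ were strictly smaller than $\min_j \alpha_j/a_j$ then $\gamma(\gF)-q_{\bfz^*}\gamma^{\bfz^*}=\sum_{\bfz'<\bfz^*}q_{\bfz'}\gamma^{\bfz'}$ would still have root sequence $\bfz^*$, contradicting that the right-hand side has root sequence $<\bfz^*$. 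Thus $q_{\bfz^*}$ equals the value produced by the algorithm. Subtracting and iterating shows that $Z$ and the $q_{\bfz}$ coincide term by term with the algorithm's output: for the countably many top-dimensional terms this is an induction over the order type $\omega$, after which what remains is exactly $\gamma_\infty$, whose decomposition is unique by induction on $\dim$.

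\emph{Main obstacle.} The delicate point is the passage to the limit: one must verify that $\gamma_\infty$ is genuinely a cohomology-table-shaped object of strictly smaller dimension (so the induction on $\dim$ applies) and that the dimension drop does not spoil the chain condition when the countably many root sequences are concatenated. The monotone-convergence argument gives entrywise convergence easily, but pinning down $\dim\gamma_\infty<s$ and the combinatorics of how the shorter root sequences sit below the longer ones rests on the structural facts about supernatural tables and corner positions from Example~\ref{BuntEksChains} and \cite{ES3}. A secondary subtlety, absent for vector bundles, is that uniqueness must be carried out as a transfinite rather than a finite descent.
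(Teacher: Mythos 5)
Your proposal follows essentially the same route that the paper (and \cite{ES3}) takes: iterate the corner-stripping step, show the last coordinate of the root sequence tends to $-\infty$, pass to the limit $\gamma_\infty$ of strictly smaller dimension, and recurse; then prove uniqueness by identifying the top element of any admissible chain and its coefficient from the corner values. The convergence and rationality arguments are in the right spirit, and your observation that uniqueness becomes a transfinite rather than finite descent (because shorter root sequences sit below all longer ones via the $z_{t+1}=\cdots=-\infty$ convention) correctly captures a point that is genuinely new compared to the vector bundle case.

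Where I would push back is on your diagnosis of the ``main obstacle.'' You take, as a black box from \cite{ES3}, that $\gamma - q_{\bfz}\gamma^{\bfz}$ has non-negative entries and that its root sequence strictly drops. But as stated, those facts are about a single subtraction from a genuine cohomology table of a coherent sheaf; your existence argument applies them at every step, and from the second step on $\gamma_i$ is no longer such a table. So what you really need (and what the paper explicitly flags as the ``essential ingredient'') is that entrywise non-negativity is preserved along the whole iteration, not just that the corner values stay non-negative — by construction $q_{\bfz}$ only controls the corners. Eisenbud and Schreyer secure this by exhibiting linear functionals, analogous to the pairings $\langle \beta, \gamma\rangle_{e,\tau}$, which are non-negative on cohomology tables of coherent sheaves and remain so after each subtraction; the non-negativity of every entry is then extracted from those functionals. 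Without that layer, the assertion ``every $\gamma_i$ has non-negative entries,'' which your entire convergence and uniqueness analysis rests on, is unproved. The passage-to-the-limit issue you single out is real but secondary; once entrywise non-negativity is known, $\gamma_\infty$ automatically lies in the correct class of tables. I would reorganize the argument to make the preserved positivity of the linear functionals the central lemma, and derive both the non-negativity of $\gamma_i$ and the well-behavedness of $\gamma_\infty$ from it, rather than treating them as disjoint concerns.
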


The way $q_{\bfz}$ is defined we are only sure that the corner values of 
$\gamma - q_{\bfz} \gamma^\bfz $ stays non-negative.
The essential ingredient in the proof is to show that not only the corner
values stay non-negative but that every entry in the table stays non-negative.
In order to prove the theorem, Eisenbud
and Schreyer show that certain linear functionals are 
non-negative when applied to the cohomology table of a coherent sheaf.

\section{Further topics}
\label{SekFur}

\subsection{The semigroup of Betti diagrams of modules}
\label{SubsekFurInt}
Theorem \ref{ResTheBS1} gives a complete description of the positive 
rational cone $B(\bfa, \bfb)$ generated by Betti diagrams 
of Cohen-Macaulay modules in the 
window $\Dab$. Of course a more ultimate goal is to describe precisely
what the possible Betti diagrams of modules really are.

This is a much harder problem and the results so far may mostly be described
as families of examples. Investigations into this has been done
mainly by D.Erman in \cite{Er} and by Eisenbud, Erman and Schreyer in 
\cite{EES}. 

Denote by $B_\integ = B(\bfa, \bfb)_\integ$ the semigroup of integer diagrams in 
$B(\bfa, \bfb)$, which we call the semigroup of {\it virtual Betti diagrams},
and let $B_\module = B(\bfa, \bfb)_\module$ be the semigroup of diagrams
in $B(\bfa, \bfb)$ 
which are actual Betti diagrams of modules of codimension $n$.

As a general result Erman shows:
\begin{theorem}[\cite{Er}] The semigroups $B_\integ$ and $B_\module$
are finitely generated.
\end{theorem}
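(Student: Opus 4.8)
The plan is to prove the two statements separately, with $B_\integ$ being the warm-up and $B_\module$ the genuine content.

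\textbf{Finite generation of $B_\integ$.} First I would observe that $B(\bfa,\bfb)$ is a rational polyhedral cone: by Theorem~\ref{ResTheGeo} it is the realization of the finite simplicial fan $\Sigma(\bfa,\bfb)$, whose finitely many rays are spanned by the integer vectors $\pi(\bfd)$ for $\bfd \in [\bfa,\bfb]_\inc$. Hence $B(\bfa,\bfb)$ is a finitely generated rational cone, and by Gordan's lemma the semigroup $B_\integ = B(\bfa,\bfb) \cap \mathbb{Z}^{\Dab}$ of lattice points in it is a finitely generated monoid. This step is essentially a citation of standard convex geometry; the only thing to check is that the $\pi(\bfd)$ are themselves integer diagrams, which holds by the definition of $\pi(\bfd)$ as the smallest integer solution of the Herzog--K\"uhl equations in Subsection~\ref{ResSubsecPure}.

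\textbf{Finite generation of $B_\module$.} Here the point is that $B_\module \subseteq B_\integ$, but $B_\module$ need \emph{not} be saturated — not every lattice point of the cone, nor every positive multiple of such a point, is realized by an actual module. The strategy is: (i) show that $B_\module$ contains a full-dimensional subcone of finite index, or more precisely that there is a positive integer $N$ so that $N \cdot \beta \in B_\module$ for every $\beta \in B_\integ$; and (ii) bound the ``defect'' between $B_\integ$ and $B_\module$. For (i), each generator $\pi(\bfd)$ of $B_\integ$ has a positive multiple $N_\bfd\,\pi(\bfd)$ realized as the Betti diagram of a Cohen--Macaulay module of codimension $n$ — this is exactly Theorem~\ref{ResTheBS1} together with the fact that $\codim = n$ means depth $0$, i.e. an artinian module, which exists since the constructions of Section~\ref{ExiSek} produce CM modules of the prescribed codimension (here $c = n$). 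Taking $N = \mathrm{lcm}_\bfd N_\bfd$, every element of the monoid generated by the $N\pi(\bfd)$ lies in $B_\module$ since direct sums of modules add Betti diagrams. So $B_\module$ contains the submonoid $N \cdot B_\integ$.

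For (ii) — and this is the main obstacle — one must control which of the finitely many cosets of $N \cdot B_\integ$ inside $B_\integ$ actually meet $B_\module$, and show that within each such coset the realized diagrams form a finitely generated monoid-module. The key finiteness input is a \emph{bounded-regularity} statement: a virtual Betti diagram $\beta \in B(\bfa,\bfb)$ lies in a fixed window, so the shifts $a_i, b_i$ are bounded, and one uses that a module $M$ with $\beta(M) = \beta$ can be taken with Hilbert function and hence Hilbert polynomial determined by $\beta$, living in a bounded range; then a standard argument (e.g. via Hilbert schemes / the boundedness of the relevant family, as in Erman's paper) shows that whether a given lattice point is a genuine Betti diagram can be tested effectively and that the set of genuine ones is closed under adding elements of $N \cdot B_\integ$ plus finitely many ``exceptional'' base diagrams. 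Concretely: enumerate the finitely many cosets $\gamma + N\cdot B_\integ$; for each, the realized diagrams in it, if nonempty, contain $\gamma_0 + N\cdot B_\integ$ for some minimal realized $\gamma_0$ (minimality in the coordinatewise partial order on the finitely many lattice points of the coset below any fixed one — a finite check, since the realized diagrams of \emph{bounded} invariants form a constructible, in particular finite-up-to-tail, set), and $B_\module$ is then the union over cosets of these translated submonoids together with the finitely many $\gamma_0$'s and the generators from step (i), hence finitely generated. I expect the delicate part to be justifying that ``realized diagrams of bounded invariants'' behaves well enough — this is where one genuinely needs Erman's analysis (bounds on regularity and the structure of the relevant parameter spaces), and I would cite \cite{Er} for that input rather than reprove it, closing the argument by assembling the finitely many generators produced above.
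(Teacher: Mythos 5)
The survey states this theorem with a bare citation to \cite{Er} and gives no proof, so I will assess your sketch on its own terms.

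Your treatment of $B_\integ$ is correct: by Theorems \ref{ResTheBS1} and \ref{ResTheBS2}, $B(\bfa,\bfb)$ is exactly the rational cone generated by the finitely many pure diagrams $\pi(\bfd)$ with $\bfd\in[\bfa,\bfb]_\inc$, it is pointed since it lies inside the nonnegative orthant of $\Dab$, and Gordan's lemma applies.

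For $B_\module$ there are two genuine gaps. First, you call the $\pi(\bfd)$ ``generators of $B_\integ$,'' but they generate the \emph{cone}, not the semigroup of its lattice points. In consequence, ``the monoid generated by the $N\pi(\bfd)$'' is in general a proper submonoid of $N\cdot B_\integ$, so your asserted inclusion $N\cdot B_\integ\subseteq B_\module$ does not follow from the sentence that precedes it. The inclusion is nonetheless true, and your idea is salvageable: take a Hilbert basis $\gamma_1,\dots,\gamma_k$ of $B_\integ$ (which exists by the Gordan step), write each $\gamma_j=\sum_i q_{ij}\,\pi(\bfd^i)$ with $q_{ij}\in\QQ_{\geq 0}$, scale until the coefficients become nonnegative integer multiples of the realizing multiples from Theorem \ref{ResTheBS1}, and let $N$ be the least common multiple of the resulting scalars; then every $N\gamma_j$ is the Betti diagram of a direct sum of pure CM modules, and since every element of $N\cdot B_\integ$ is a nonnegative integer combination of the $N\gamma_j$ and $B_\module$ is closed under direct sums, $N\cdot B_\integ\subseteq B_\module$. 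Second, your step (ii) is circular as written: you defer ``the delicate part'' to \cite{Er}, but that is precisely the theorem you are asked to prove. It is also overengineered — no bounded-regularity statements, Hilbert schemes, or constructibility of families are needed once $N\cdot B_\integ\subseteq B_\module$ is in hand, and attributing the residual finiteness to those tools misidentifies what is going on. What finishes the argument is a purely combinatorial Noetherianity: $\kr[B_\integ]$ is a finite, hence Noetherian, module over $\kr[N\cdot B_\integ]$; the $\kr$-span of the monomials $\{x^u:u\in B_\module\}$ is a $\kr[N\cdot B_\integ]$-submodule because $B_\module+N\cdot B_\integ\subseteq B_\module$, and being a graded submodule spanned by monomials it is generated by finitely many monomials $x^{g_1},\dots,x^{g_r}$ with $g_i\in B_\module$ (also note there can be several minimal elements per coset, not the single $\gamma_0$ you posit — this is Dickson's lemma, not a uniqueness statement). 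Then $B_\module=\bigcup_i\bigl(g_i+N\cdot B_\integ\bigr)$, so $B_\module$ is generated as a monoid by $g_1,\dots,g_r$ together with a Hilbert basis of $N\cdot B_\integ$, all of which lie in $B_\module$. That closes the argument with no appeal to \cite{Er}.
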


Not every virtual Betti diagram may be an actual Betti diagram of a module.

\begin{example} \label{FurEksEnto}
 The pure diagram $\pi = \pi(0,1,3,4)$ is 

\[ \left [ 
\begin{matrix}
1 & 2 & - & - \\
- & - & 2 & 1
\end{matrix}
\right ].
\]

If this were the Betti diagram of a module, this module would have resolution

\[ S \vmto{\left [ \begin{matrix} l_1, & l_2 \end{matrix} \right ]} 
S(-1)^2 \vpil 
S(-3)^2 \vpil S(-4). 
\]
But this is not possible since writing $S(-1)^2 = Se_1 \oplus S{e_2}$
with $e_i \mapsto l_i$, there would be a syzygy $l_2e_1 - l_1 e_2$ of 
degree $2$. 

However $2 \pi$ is an actual Betti diagram. 
Take a sufficiently general map $S^2 \vmto{d} S(-1)^4$, for instance 
\[ d = \begin{pmatrix} x_1 & x_2 & x_3 & x_4 \\
x_1 & 2x_2 & 3x_3 & 4x_4 
\end{pmatrix}. \]
The resolution of the cokernel of $d$ is then
\[  S^2 \vmto{d} S(-1)^4 \vpil 
S(-3)^4 \vpil S(-4)^2. 
\]
Also, the equivariant resolution $E(1,2,1)$ (recall that $1,2,1$ are
the differences of $0,1,3,4$) is given by 
\[  S^3 \vpil S(-1)^6 \vpil 
S(-3)^6 \vpil S(-4)^3. 
\]
So we see that on the ray determined by $\pi$ the integer
diagrams 
\[   \left [ 
\begin{matrix}
m & 2m & - & - \\
- & - & 2m & m
\end{matrix}
\right ]
\]
are actual Betti diagrams for $m \geq 2$ but not for $m = 1$. 
\end{example}

Recall that if $\bfd$ is a degree sequence, then $\pi(\bfd)$ is the
smallest integer diagram on the ray $t \pi(\bfd), t >0$. The integer
diagrams are then $m \pi(\bfd), m \in \nat$. 
\begin{conjecture}[\cite{EFW}, Conj. 6.1] For every degree sequence 
$\bfd$ there
is an integer $m_0$ such that for $m \geq m_0$ the diagram $m\pi(\bfd)$ 
is the Betti diagram
of a module.
\end{conjecture}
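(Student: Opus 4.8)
The plan is to reduce the conjecture to a coprimality assertion about a numerical semigroup and then invoke the Sylvester--Frobenius theorem. Fix the degree sequence $\bfd = (d_0,\ldots,d_c)$; the integer diagrams on the ray $\rat_{>0}\,\pi(\bfd)$ are exactly the $m\,\pi(\bfd)$ with $m$ a positive integer. Put
\[ \Sigma(\bfd) = \{\, m \in \nat : m\,\pi(\bfd) = \beta(N)\text{ for some graded }S\text{-module }N \,\}. \]
If $\beta(N_1) = m_1\pi(\bfd)$ and $\beta(N_2) = m_2\pi(\bfd)$ then $\beta(N_1\oplus N_2) = (m_1+m_2)\pi(\bfd)$, so $\Sigma(\bfd)$ is a sub-semigroup of $(\nat,+)$. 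By Theorem \ref{ResTheBS1} there is a Cohen--Macaulay module $M$ of codimension $c$ with pure resolution of type $\bfd$, and since $\pi(\bfd)$ spans the solution space of the Herzog--K\"uhl equations, $\beta(M) = p\,\pi(\bfd)$ for some integer $p\geq 1$; hence $p\nat \subseteq \Sigma(\bfd)$ by taking direct sums of copies of $M$. A sub-semigroup of $\nat$ whose elements have greatest common divisor $1$ contains every sufficiently large integer, so the conjecture is \emph{equivalent} to the assertion that $\gcd\Sigma(\bfd) = 1$ for every $\bfd$.

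It is therefore enough, for each $\bfd$, to exhibit two modules with pure resolution of type $\bfd$ whose Betti diagrams are $q_1\pi(\bfd)$ and $q_2\pi(\bfd)$ with $\gcd(q_1,q_2)=1$. The three constructions of Section \ref{ExiSek} are the natural candidates: the $GL$-equivariant resolution of \ref{SubsekExiEkvi}, whose $i$th Betti number is $\dim_\kr S_{\alpha(\bfe,i)}(V)$; the characteristic-free resolution of \ref{SubsekExiChfree}, whose Betti numbers are sums of binomial coefficients coming from the cohomology of line bundles on the auxiliary products of projective spaces; and the generic-matrix resolution of \ref{SubsekExiDeg}, whose $i$th Betti number is $(\dim_\kr S_{\gamma(\bfe,i)}F)\cdot\binom{r}{d_i}$. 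Each realizes a definite multiple $m_{\mathrm{eq}}(\bfd), m_{\mathrm{cf}}(\bfd), m_{\mathrm{det}}(\bfd)$ of $\pi(\bfd)$, and one would try to prove that these (or the multiples coming from the further infinite families of pure resolutions in \cite{BEKS2} and \cite{SW}) have no common prime factor. For instance for $\bfd = (0,1,3,4)$, so $\bfe = (1,2,1)$, the equivariant resolution $E(1,2,1)$ gives $3\,\pi(\bfd)$ while a general map $S^2 \vpil S(-1)^4$ gives $2\,\pi(\bfd)$; since $\gcd(2,3)=1$, this recovers the claim (and the failure at $m=1$) of Example \ref{FurEksEnto}.

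The main obstacle is precisely this step: controlling the multiplicities of all known constructions simultaneously over all degree sequences, since a priori there could be a $\bfd$ for which every currently available construction has a multiple divisible by a fixed prime. A more robust, though harder, route would be to show directly that a \emph{general} complex of the numerically forced shape is acyclic once $m$ is large. Concretely, inside the finite-dimensional affine space of tuples $(\phi_1,\ldots,\phi_c)$ of matrices of homogeneous forms of degrees $d_i-d_{i-1}$ and sizes $mb_{i-1}\times mb_i$, where $b_i=\pi(\bfd)_i$, one considers the closed subvariety $Z_m$ cut out by the equations $\phi_i\phi_{i+1}=0$; the resolutions of $M^{\oplus k}$ give exact points of $Z_{kp}$, so a suitable component of $Z_m$ is nonempty. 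One wants: for $m\geq m_0$ the general point $\phi$ of that component satisfies the Buchsbaum--Eisenbud acyclicity criterion, i.e. the rank conditions hold and $\operatorname{depth}I_{r_i}(\phi_i)\geq i$ for all $i$ with $r_i$ the expected ranks, these depths being forced upward by genericity once the matrix sizes $mb_{i-1},mb_i$ grow. For such $\phi$ the map $\phi_1$ has non-unit entries, so the complex is already minimal; it resolves a Cohen--Macaulay module of codimension $c$, whose Betti diagram is then pure of type $\bfd$ and, by the alternating rank count, equal to $m\,\pi(\bfd)$. The delicate points are singling out the correct component of $Z_m$ and showing it carries exact points for \emph{all} large $m$ (not merely multiples of $p$), and extracting an explicit $m_0$ from depth estimates for minors of \emph{constrained} general matrices of forms; this is where I expect the real difficulty to lie.
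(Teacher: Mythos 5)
The statement you are attacking is labeled in the paper as a \emph{conjecture} (attributed to \cite{EFW}, Conj.~6.1), so there is no proof in the paper to compare against; the surrounding text simply reports the conjecture as open and gives Example \ref{FurEksEnto} as motivating evidence. Your reduction is nevertheless correct and worth stating: $\Sigma(\bfd) = \{\, m \in \nat : m\pi(\bfd)\ \text{is a Betti diagram}\,\}$ is a numerical sub-semigroup of $\nat$ (closed under addition via direct sums), it is nonempty by Theorem \ref{ResTheBS1}, and a sub-semigroup of $\nat$ with $\gcd = 1$ is cofinite; conversely a cofinite sub-semigroup contains two consecutive integers, hence has $\gcd = 1$. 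Note also the small point you did not make explicit but which keeps the setup clean: any $N$ with $\beta(N) = m\pi(\bfd)$ automatically has projective dimension $c$ and, by the Herzog--K\"uhl equations, codimension $\geq c$, hence codimension exactly $c$ and is Cohen--Macaulay by Auslander--Buchsbaum, so the $N$ ranging over all graded modules gives the same $\Sigma(\bfd)$ as restricting to CM ones. Your reading of Example \ref{FurEksEnto} (multiplicities $2$ and $3$ for $\bfd = (0,1,3,4)$, hence cofiniteness with conductor $2$) is correct.

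The gap is exactly the one you name: neither of your two strategies closes. For the coprimality route, there is no known control guaranteeing that among the equivariant, characteristic-free, determinantal, and \cite{BEKS2}/\cite{SW} constructions, some pair gives coprime multiples of $\pi(\bfd)$ for \emph{every} $\bfd$; the multiplicities are dimensions of Schur modules or products of binomial coefficients, and a priori a prime could divide all of them for a bad $\bfd$. For the deformation route, the variety $Z_m$ of commuting tuples $\phi_i\phi_{i+1}=0$ of matrices of forms is highly reducible and the Buchsbaum--Eisenbud depth conditions on the ideals of minors $I_{r_i}(\phi_i)$ do not follow from genericity within an arbitrary component of $Z_m$; you would need to identify the component containing the CM points and argue semicontinuity of the relevant depths in $m$, neither of which is established. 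So this is not a proof but a clean reformulation plus an attack plan; since the statement is an open conjecture in the literature and you flagged the obstruction honestly, that is the right assessment of where things stand.
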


For rays in the positive cone $B(\bfa, \bfb)$ which are not extremal,
things are more refined. 
The following examples are due to Erman \cite{Er}.

\begin{example}
The diagram 
\[ \beta = \left [ 
\begin{matrix}
2 & 3 & 2 & - \\
- & 5 & 7 & 3
\end{matrix}
\right ]
\]
is a virtual Betti diagram in $B_\integ$, and 
\[ \pi = \pi(0,2,3,4) =  \left [ 
\begin{matrix}
1 & - & - & - \\
- & 6 & 8 & 3
\end{matrix}
\right ]
\]
is the Betti diagram of the module $S/(x_1, x_2, x_3)^2$. 
Erman shows that $\beta + m \pi$ is not in $B_\module$ for
any integer $m \geq 0$. 
In particular $B_\integ \backslash B_\module$ may not be finite.
\end{example}

In the following example we let $S = \kr[x_1, \ldots, x_{p+1}]$
where $p$ is a prime. It generalizes 
Example \ref{FurEksEnto} above which is the case $p = 2$.  

\begin{example} Erman calculates that the diagram 
\[ \pi = \pi(0,1,p+1, \ldots, 2p) = \left [
\begin{matrix}
1 & 2 & - & - &\cdots & -\\
\vdots & - & \vdots &  & & \vdots \\
- & - & * & * & \cdots & * 
\end{matrix} \right ].
\]
If $m \pi$ is the Betti diagram of a CM-module then its resolution starts
\[ S^m \vmto{d} S(-1)^{2m} \vpil \cdots .\]
An $a \times (a+m)$ matrix degenerates in codimension $\leq m+1$. 
Since we are considering CM-modules, their codimension
is one less than the length of the degree sequence defining $\pi$,
which is $p+1$. 
So $m\pi$ is in $B_\module$ only if $m \geq p$. 
\end{example}

\begin{example}
The diagram
\[   \beta =  \left [ 
\begin{matrix}
2 & 4 & 3 & - \\
- & 3 & 4 & 2
\end{matrix}
\right ]
\]
is a virtual Betti diagram. Erman shows that $\beta$ and $3 \beta$ 
are not in $B_\module$, but $2 \beta$ is in $B_\module$. In particular
the points on $B_\module$ on a ray in $B_\integ$ may contain nonconsecutive
lattice points.
\end{example}

In \cite{Er2} Erman is able to apply Boij-S\"oderberg theory
to prove the Buchsbaum-Eisenbud-Horrocks conjecture in some
special cases.

\subsubsection{Module theoretic interpretations of the decomposition}
When decomposing the Betti diagram of a module $M$ into a linear
combination of pure diagrams associated to a chain of degree sequences
\begin{equation} \label{FurLigDec}
 \beta(M) = \sum_{i=1}^t c_i \pi(\bfd^i) 
\end{equation}
where $\bfd^1 < \cdots < \bfd^t$, 
one may ask if the decomposition reflects some decomposition of the
the minimal free resolution of $M$. 

Eisenbud, Erman and Schreyer, \cite{EES}, ask if there exists a filtration
of 
\begin{equation} \label{FurLigFilt}
 M = M_t \supset M_{t-1} \supset M_{t-2} \supset \cdots \supset 
M_1 \supset M_0 = 0 
\end{equation}
such that the $M_i /M_{i-1}$ have pure resolutions $c_i \pi(\bfd^i)$. 
Of course in general this cannot be so since the coefficient $c_i$
in the decomposition of $\beta(M)$ may not be integers. 
However even in the case of integer $c_i$, examples show there
may not exist such a filtration of $M$ or even of $M^{\oplus r}$
for $r \geq 1$, \cite[Ex. 4.5]{SW}. There is the question though if 
some deformation or specialization of $M$ or $M^{\oplus r}$ for $r \geq 1$ could
have such a filtration.

In \cite{EES} they give sufficient conditions on chains of degree sequences 
such that if $M$ has a decomposition
(\ref{FurLigDec}), the $c_i$ are integers and there is a filtration
(\ref{FurLigFilt}). 
As a particular striking application they give the following.

\begin{example} Let $S = \kr[x_1, x_2, x_3]$ and $p = 2k+1$
be an odd prime. Consider the pure diagrams
\[ \pi(0,1,2,p) = \left [ 
\begin{matrix}
\binom{p-1}{2} & p^2 - 2p &\binom{p}{2}  & -  \\
\vdots & & & \vdots \\
- & - & - & 1  
\end{matrix}
\right ], \,\,
 \pi(0,p-2,p-1,p) = 
 \left [ 
\begin{matrix}
1 & - & - & -  \\
\vdots & & & \vdots \\
- & \binom{p}{2} & p^2 - 2p & \binom{p-1}{2}   
\end{matrix}
\right ]
\]
and 
\[ \pi(0,k,k+1,p) =  \left [ 
\begin{matrix}
1 & - & -  & -  \\
  & \vdots & &  \\
- & p & p & - \\
 & & \vdots & \\
- & - & - & 1 
\end{matrix}
\right ].
\]
If $\alpha + 1 + \binom{p-1}{2} \equiv 0 \pmod p$ 
the diagram 
\[ \beta = \frac{1}{p} \pi(0,1,2,p) + \frac{\alpha}{p}
\pi(0,k,k+1,p) + \frac{1}{p} \pi(0,p-2,p-1,p) \] is an
integer diagram and the integer
diagrams on the ray of $\beta$ are $m \beta$ where $m$ is a positive integer. 
In \cite{EES} they show that if $M$
is a module with Betti diagram $m\beta$, then it has a filtration
(\ref{FurLigFilt}). 
Thus the coefficients in the decomposition of $m \beta$ are integers
and so $m$ must be divisible by $p$. Hence we have a ray
where only $\frac{1}{p}$ of the lattice points are actual Betti diagrams
of modules.
\end{example}

\subsection{Variations on the grading}
\label{SubsekFurGrad}
The Boij-S\"oderberg conjectures concerns modules over the standard
graded polynomial ring $\kr[x_1, \ldots, x_r]$ where each
$\deg x_i = 1$. 
Since the conjectures have been settled, it is natural to 
consider variations on the grading or finer gradings on the polynomial
ring and module categories. 
Specifically each degree of $x_i$ may be an element of ${\nat}_0^r \backslash
\{ {\mathbf 0} \}$
and the modules $\hele^r$-graded. 
When the module category we consider is closed under direct sums, the
positive rational multiples of their Betti diagrams form a cone.
The topic one has been most interested in concerning other
gradings, is what are the extremal rays in this cone.
This is the analog of the rays of pure
resolutions. More precisely one is interested in finding the
Betti diagrams $\beta(M)$ such that if 
one has a positive linear combination
\[ \beta(M) = q_1 \beta(M_1) + q_2 \beta(M_2),\]
where $M_1$ and $M_2$ are other modules in the category, then
each $\beta(M_i)$ is a multiple of $\beta(M)$. 

In the case $S = \kr [x,y]$ and $\deg x = 1, \deg y = 2$, this is
investigated in the note \cite{BBetal} by B.Barwick et.al.
where they give candidates for
what the extremal rays are, although no proofs.

\medskip
Also with the polynomial ring in two variables, Boij and Fl{\o}ystad, 
\cite{BF}, consider the case when $\deg x = (1,0), \deg y = (0,1)$. 
They fix a degree sequence $(0,p,p+q)$ and consider bigraded artinian
modules whose resolution becomes pure of this type when taking
total degrees by the map $\hele^2 \pil \hele$ given by $(d_1, d_2) \mapsto
d_1 + d_2$. Let $P(p,q)$ be the positive rational cone generated
by such modules.

\begin{theorem}[\cite{BS}] When $p$ and $q$ are relatively prime
the extremal rays in the cone $P(p,q)$ are parametrized by pairs
$(a,I)$ where $a$ is an integer and $I$ is an order ideal (down set)
in the partially ordered set $\nat^2$, contained in the region
$px + qy < (p-1)(q-1)$. 
\end{theorem}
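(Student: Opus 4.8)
The plan is to strip the problem of its homological content, reduce it to a question about bigraded Hilbert functions, and then solve a polyhedral problem whose combinatorics is governed by the numerical semigroup $\langle p,q\rangle$.

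First I would note that, since $p,q\geq 1$, the three total degrees $0$, $p$, $p+q$ are distinct, so the bidegrees occurring in the three terms $F_0,F_1,F_2$ of a resolution of the prescribed type lie in three disjoint subsets of $\ZZ^2$. Hence in the bigraded $K$-polynomial $(1-t_1)(1-t_2)H_M(t_1,t_2)=B_0(t_1,t_2)-B_1(t_1,t_2)+B_2(t_1,t_2)$ there is no cancellation between the $B_i$, and the bigraded Betti diagram of a module $M$ in this class is equivalent data to its bigraded Hilbert function $H_M$. Thus $P(p,q)$ is isomorphic, as a cone, to the cone of finitely supported nonnegative functions $H$ on $\ZZ^2$ such that $(1-t_1)(1-t_2)H$ is supported in total degrees $\{0,p,p+q\}$ with sign pattern $(+,-,+)$ and such that $H$ (after clearing denominators) is realized by an honest bigraded artinian module. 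Note that the extremal modules will in general not be cyclic: for $q>1$ no quotient $S/J$ by a monomial ideal lies in the class, because the staircase of $J$ is forced to be an arithmetic progression of common difference $q$, which cannot reach both axes when $\gcd(p,q)=1$.

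Second I would parametrize this cone. Writing the total-degree $0$, $p$, $p+q$ parts of $(1-t_1)(1-t_2)H$ as $\sum_j c_j t_1^j t_2^{-j}$, $-\sum_j b_j t_1^j t_2^{p-j}$ and $\sum_j d_j t_1^j t_2^{p+q-j}$, the requirement that $H$ be a finite nonnegative function forces $b_j=c_j+d_j$ together with $c_j-c_{j-p}=d_{j+q}-d_j$. Using $\gcd(p,q)=1$ this is equivalent to the existence of a single Laurent polynomial $E(t)=\sum_a e_a t^a$ with
\[ C(t)=E(t)(1+t+\cdots+t^{q-1}),\qquad D(t)=E(t)\,t^q(1+t+\cdots+t^{p-1}), \]
where $C(t)=\sum_j c_j t^j$, $D(t)=\sum_j d_j t^j$; then $B(t)=E(t)(1+t+\cdots+t^{p+q-1})$, the finiteness of $H$ is automatic, and the one remaining condition $H\geq 0$ becomes a family of ``partial-sum'' linear inequalities in the coefficients $e_a$. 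I would also record the translation symmetry $H\mapsto (t_1/t_2)H$, i.e. $E\mapsto tE$, which preserves the cone; the extremal rays therefore fall into $\ZZ$-orbits, and a choice of representative of the orbit is the integer parameter $a$ in the statement.

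Third, and this is the core, I would identify the extremal rays modulo this shift with the order ideals $I$ of the region $R=\{(x,y)\in\NN^2:px+qy<(p-1)(q-1)\}$. Here the numerical semigroup $\langle p,q\rangle$ must enter: since $(p-1)(q-1)-1=pq-p-q$ is its Frobenius number and $R$ contains exactly one lattice point for each semigroup element of $[0,pq-p-q]$, I expect the independent partial-sum inequalities on $E$ to be indexed by the points of $R$, so that a suitably normalized slice of the cone is a polytope whose vertices are the indicator functions of the down-sets $I\subseteq R$. For each $(a,I)$ I would then construct explicitly a bigraded artinian module $M_{a,I}$ — a monomial-type module, built from arithmetic-progression staircases and controlled deformations, whose Hilbert function fills the shifted order ideal in the prescribed pattern — to witness that the ray is attained; and conversely I would run the greedy decomposition (subtract the maximal multiple of some extremal $H_{M_{a,I}}$ preserving nonnegativity, exactly as in the algorithmic form of the \BS\ decomposition) to show every $H$ in the cone is a finite positive rational combination of the $H_{M_{a,I}}$.

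The step I expect to be hardest is this last identification: proving that the facets of the shift-reduced cone are cut out precisely by the lattice points of the Frobenius region $R$, and that the extreme points are exactly the order ideals of $R$ rather than arbitrary subsets. This is a genuine combinatorial and number-theoretic argument about the partial-sum inequalities for $E$, and is where coprimality of $p$ and $q$ is used essentially — e.g. through the Apéry set of $\langle p,q\rangle$ — to control when a subtraction can preserve positivity. A secondary but non-trivial obstacle is realizability: showing that every combinatorially valid $H$ (in particular every vertex) is genuinely the Hilbert function of a bigraded module and not merely a ``virtual'' one, which is the analogue here of the existence half of the \BS\ theorems and which I would address by the explicit monomial constructions above.
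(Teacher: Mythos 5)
This theorem is only cited in the survey (from Boij--Fl{\o}ystad) and is not proved there, so there is no argument in the text to measure your proposal against. Assessed directly, your algebraic setup is correct: disjointness of the total degrees $0,p,p+q$ makes the bigraded Betti diagram and the bigraded Hilbert function equivalent data for modules in the class; divisibility of $B_0-B_1+B_2$ by $(1-t_2)$ and $(1-t_1)$ gives $B=C+D$ and $C(s)(s^p-1)=s^{-q}D(s)(s^q-1)$; coprimality of $p,q$ (via $\gcd(s^p-1,s^q-1)=s-1$) yields exactly the Laurent polynomial $E$ you describe; and $C,D\geq 0$ (hence $B\geq 0$) do follow from $H\geq 0$ given the support pattern, so $H\geq 0$ is the only remaining family of linear conditions on the $e_a$. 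Up to that point the proposal is sound.

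From there on, however, the proposal organizes the problem without solving it. The two claims that carry the theorem --- (i) that after reducing by the shift $E\mapsto tE$ the nonnegativity constraints define the order polytope of the Frobenius region $R$, with vertex set exactly the indicator functions of down-sets, and (ii) that each such vertex is attained by an actual bigraded artinian module --- are both left as expectations. You correctly single them out as the hard steps, but expectation is not argument: (i) requires writing out the partial-sum inequalities in the $e_a$ and showing the irredundant ones are indexed by the lattice points of $R$ and that the extreme points are down-sets rather than arbitrary subsets, for which the Ap\'ery-set remark is a heuristic, not a proof; and (ii) requires explicit modules, where ``arithmetic-progression staircases and controlled deformations'' is a phrase, not a construction --- you note yourself that for $q>1$ no cyclic monomial quotient lies in the class, and the survey points out that the $I=\emptyset$ extremal ray already needs a quotient of one monomial ideal by another while the maximal $I$ needs the $GL\times GL$-equivariant complex, with the interpolating family left unspecified in your sketch. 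You also tacitly replace $P(p,q)$, the cone generated by Hilbert functions of actual modules, with the cone of nonnegative $H$; that equality is itself part of the theorem and rests on (ii) together with a decomposition argument that presupposes (i). As written, the proposal is a correct reformulation of the statement, not a proof of it.
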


In particular there is a maximal order ideal in this region; it 
corresponds to the equivariant resolution. And there is a minimal
order ideal, the empty set; it corresponds to a resolution of 
a quotient of monomial ideals given in the original \cite[Remark 3.2]{BS}. 

\medskip
For the polynomial ring in any number $r$ of variables, Fl{\o}ystad 
\cite{Fl} lets $\deg x_i$ be the $i$'th unit
vector $e_i$. He considers $\hele^r$-graded artinian modules whose
resolutions becomes pure of a given type $(d_0, d_1, \ldots, d_c)$ when
taking total degrees. He gives a complete description of the linear
space generated by their multigraded Betti diagrams, see Theorem
\ref{ExiTheSchurArtin} in this survey.

\medskip
Instead of Betti diagrams one may consider cohomology tables arising
from other gradings.
Eisenbud and Schreyer in \cite{ES} consider vector bundles $\gF$ on
$\proj{1} \times \proj{1}$. The cohomology groups 
\begin{equation} \label{FurLigKoh}
 H^i \gF(a,b), \quad i = 0,1,2, \quad (a,b) \in \hele^2 
\end{equation} 
give a cohomology
table in $\oplus_{(a,b) \in \hele^2} \rat^3$. 
One gets a positive rational cone of bigraded cohomology tables
and one may ask what are the extremal rays of this cone. 
If for each $(a,b)$ the  cohomology groups (\ref{FurLigKoh}) are
nonvanishing for at most one $i$, $\gF$ is said to have natural cohomology.
In \cite{ES} they give sufficient conditions for a vector bundle
with natural cohomology to be on an extremal ray.

Let us end the subsection 
with a quote by F.-O. Schreyer \cite{Sch}: ``Very little
is known for the extension of this theory to the multi-graded setting.
I believe that there will be beautiful results ahead in this direction.''

\subsection{Poset structures}
\label{SubsekFurPoset}
In the unique decomposition of a Betti diagram 
\begin{equation*} 
 \beta(M) = \sum_{i=1}^s c_i \pi(\bfd^i) 
\end{equation*}
that we consider, we require that the degree sequences form a chain
$\bfd^1 < \bfd^2 < \cdots <\bfd^r$. 

C.Berkesch et.al., \cite{BEKS}, show that this order condition 
is reflected on modules with pure resolutions.

\begin{theorem}[\cite{BEKS}] Let $\bfd$ and $\bfd^\prime$ be
degree sequences. Then $\bfd \leq \bfd^\prime$ if and only if there
exists Cohen-Macaulay modules $M$ and $M^\prime$ with 
pure resolutions of types $\bfd$ and $\bfd^\prime$ with 
a nonzero morphism $M^\prime  \pil M$ of degree $\leq 0$. 
\end{theorem}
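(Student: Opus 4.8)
The statement is an equivalence, so there are two implications: the \emph{extraction} direction, that a nonzero morphism $M'\to M$ of degree $\le 0$ forces $\bfd\le\bfd'$, and the \emph{construction} direction, that $\bfd\le\bfd'$ produces such modules and a morphism. First observe that both degree sequences necessarily have length $c+1$ for a common $c$: a nonzero morphism has image $N\ne 0$, which as a submodule of the \emph{unmixed} CM module $M$ has $\codim N=\codim M$, while as a quotient of $M'$ it has $\codim N\ge\codim M'$; so $\codim M'\le\codim M$, and since $\bfd\le\bfd'$ is meant in the componentwise order on degree sequences of equal length, both modules have codimension $c$.

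For the extraction direction the plan is to lift. A nonzero graded morphism $\phi\colon M'\to M$ of degree $e\le 0$ lifts to a chain map $\widetilde\phi_\bullet\colon\Gd\to\Fd$ of the minimal free resolutions, which are pure of types $\bfd',\bfd$ and have length $c$. For each $k$, $\phi$ induces a map $\bar\phi_k\colon\Omega^kM'\to\Omega^kM$ on $k$-th syzygy modules; by purity, $\Omega^kM'$ is minimally generated in the single degree $d'_k$ and $\Omega^kM$ in the single degree $d_k$, so if $\bar\phi_k\ne 0$ then some generator must map to a nonzero element of $(\Omega^kM)_{d'_k+e}$, forcing $d_k\le d'_k+e\le d'_k$. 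Thus it suffices to show $\bar\phi_k\ne 0$ for all $k$: the case $k=0$ is the hypothesis, and the case $k=c$ holds because $\bar\phi_c=\widetilde\phi_c$ and the dual map $\Ext^c_S(\phi,\omega_S)\colon M^\vee\to M'^\vee$ is again nonzero (its nonvanishing uses unmixedness, as above; and $\Ext^c_S(-,\omega_S)$ carries pure resolutions of CM codimension-$c$ modules to pure resolutions), whence the top component of a lift of it, which is $\widetilde\phi_c$, cannot vanish. For $0<k<c$ one must rule out $\bar\phi_k=0$; I would propagate such a vanishing back through the syzygy short exact sequences to force $\phi$ itself to factor through a free $S$-module, which is impossible because by Auslander--Buchsbaum the low syzygies of $M'$ are too torsion to admit a nonzero map to a free module. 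Here one may first reduce to the artinian case $c=n$ by cutting with a general linear form $\ell$ (over an infinite field, enlarging the base field if necessary): this preserves purity and codimension, and after replacing $\phi$ by $\ell^{-k}\phi$ for the largest $k$ with $\im\phi\subseteq\ell^kM$ it preserves the nonvanishing of the induced map.

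For the construction direction the plan is to reduce to a cover relation $\bfd\lessdot\bfd'$ in $\zinc{c+1}$ — equivalently, $\bfd'$ is $\bfd$ with one entry raised by $1$ — and then to walk along a saturated chain $\bfd=\bfd^0\lessdot\cdots\lessdot\bfd^m=\bfd'$. For a single cover I would produce both pure resolutions inside one of the uniform constructions of Section \ref{ExiSek}: in the characteristic-free Eisenbud--Schreyer construction the modules of types $\bfd$ and $\bfd'$ arise as pushforwards to $\proj n$ of resolutions of twisted structure sheaves $\gO_Y(\cdots)$ on a product of projective spaces for complete-intersection data $Y$ differing in a single factor, so that a specialization of that data induces the comparison map; alternatively, in characteristic $0$, the equivariant resolutions $E(\bfe)$ supply the comparison through the maps between Schur modules $S\tek S_{\alpha(\bfe',i)}$ and $S\tek S_{\alpha(\bfe,i)}$ that are forced, unique up to a scalar, by Pieri's rule. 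The main obstacle I anticipate is exactly here: the constructions must be made \emph{chain-uniform}, yielding all the $M_k$ and all the maps $M_k\to M_{k-1}$ simultaneously from a single geometric family, because distinct constructions of a module of a given pure type need not agree and surjective comparison maps are generally unavailable (a pure module with one generator cannot surject onto a pure module with two), and one must then check that the resulting composite $M'\to M$ is genuinely nonzero, for instance by tracking its homological-degree-$0$ component. By contrast the extraction direction is essentially routine once the syzygy-level nonvanishing of all the $\bar\phi_k$ is secured, which is the one delicate homological point there.
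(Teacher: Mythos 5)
This survey states the theorem with only a citation to \cite{BEKS}; it gives no proof, so there is nothing internal to compare against, and your proposal must be judged on its own.

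For the extraction direction your overall strategy is right: lift $\phi$ to a chain map, note that a nonzero map $S(-d_k')^{\beta_k'}\to S(-d_k)^{\beta_k}$ of degree $e\le 0$ forces $d_k\le d_k'+e\le d_k'$, and then argue each lift component is nonzero. The genuine gap is in the middle case $0<k<c$, where your stated reason for the impossibility --- ``by Auslander--Buchsbaum the low syzygies of $M'$ are too torsion to admit a nonzero map to a free module'' --- is false as written: for $k\ge 1$ the syzygy $\Omega^k M'$ is a submodule of a free module, hence torsion-free, and it visibly admits nonzero maps to free modules (the inclusion). Only $M'=\Omega^0M'$ itself is a torsion module. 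The propagation you describe actually does go through, but the hinge is not a torsion argument; it is the vanishing $\Ext^j_S(M',S)=0$ for $0<j<c$ (which holds because $\operatorname{grade}M'=c$). Those vanishings let one extend a factorization $\Omega^{k-1}M'\to F_{k-1}$ across the short exact sequence $0\to\Omega^{k-1}M'\to G_{k-2}\to\Omega^{k-2}M'\to 0$, and iterating downward one shows $\bar\phi_{j}$ factors through a free module for every $j\le k-1$; only at the very last step $j=0$ does torsion of $M'$ enter and force $\phi=0$. Your proposed reduction to the artinian case does not repair this, since the syzygies remain torsion-free there. (A side remark: your opening paragraph on codimension establishes $\codim M'\le\codim M$ but not the converse, so it does not by itself show both have codimension $c$; fortunately the theorem already fixes both sequences to lie in $\zinc{c+1}$, so nothing is needed.)

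For the construction direction you correctly identify what is hard and honestly flag it, but the sketch stops short of an argument. ``Make the constructions chain-uniform'' is the entire content of this implication; without a concrete mechanism producing the nonzero degree-$\le 0$ map for a cover relation $\bfd\lessdot\bfd'$, and verifying nonvanishing of the composite along a saturated chain, this half of the theorem is unproved. So the proposal contains the right skeleton for extraction (modulo the $\Ext$-vanishing point above) and a plan rather than a proof for construction.
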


They also show the analog of this for vector bundles.
This point of view may be fruitful when trying to understand
decomposition algorithms of Betti diagrams under variations on the
gradings.

\medskip D. Cook, \cite{Co}, investigates the posets $[\bfa, \bfb]_\inc$
and shows that they are vertex-decomposable, Cohen-Macaulay and square-free
glicci.

\subsection{Computer packages}
\label{SubsekFurComp}
Macaulay 2 has the package ``BoijSoederberg''. We mention the most 
important routines in this package.
\begin{itemize}
\item {\bf decompose}: Decomposes a Betti diagram $B$ as a positive
linear combination of pure diagrams.
\item {\bf pureBettiDiagram}: Lists the smallest positive integral 
Betti diagram of a pure resolution of a given type.
\item {\bf pureCohomologyTable}: Gives the smallest positive integral 
cohomology table for a given root sequence.
\item {\bf facetEquation}: Computes the upper facet equation of 
a given facet of Type 3.
\item Routines to compute the Betti numbers for all three
pure resolutions constructed in Section 3.
\begin{itemize}
\item The equivariant resolution.
\item The characteristic free resolution.
\item The resolutions associated to generic matrices.
\end{itemize}
\end{itemize}

The package ``PieriMaps'' contains the routine {\bf PureFree} to 
compute the equivariant resolutions constructed in Subsection 
\ref{SubsekExiEkvi}, and the routine {\bf pieriMaps} to 
compute the more general resolutions of \cite{SW}, see the end of Subsection
\ref{SubsekExiEkvi}.

\subsection{Three basic problems}
\label{SubsekFurPro}
The notes \cite{EMN} is a collection of open questions and problems
related to Boij-S\"oderberg theory. We mention here three problems, 
which we consider to be fundamental. (They are not explicitly in the
notes.)

In \cite{ES} Eisenbud and Schreyer 
give a decomposition of the cohomology table of 
a coherent sheaf on $\proj{n}$ involving an infinite number of data. It does
not seem possible from this to determine the possible cohomology tables
of coherent sheaves up to rational multiple.

\begin{problem} \label{FurProCoh} Determine the possible cohomology tables
of coherent sheaves on $\proj{n}$  up to rational multiple. Can
it by done by essentially a finite number of data?
(At least if you fix a suitable ``window''.)
\end{problem}

Let $E = \oplus_{i = 0}^{\dim V} \wedge^i V$ be the exterior algebra.
A {\it Tate resolution} is an acyclic complex unbounded in each direction
\[ \cdots \pil G^{i-1} \pil G^i \pil G^{i+1} \pil \cdots \]
where each $G^i$ is a free graded $E$-module 
$\oplus_{j \in \hele} E(j)^{\gamma_{i,j}}$.  
To any coherent sheaf $\gF$ is associated a Tate resolution $T(\gF)$, 
see \cite{EFS}. Tate resolutions associated to coherent sheaves 
constitute the class of Tate resolutions which are
eventually linear i.e. such that $G^i = E (i-i_0)$ for $i \gg 0$ and 
some integer $i_0$. 
Hence the following is a generalization of the above Problem \ref{FurProCoh}.

\begin{problem}
Determine the tables $(\gamma_{i,j})$ of Tate resolutions, up to 
rational multiple.
\end{problem}

A complex $\Fd$ of free $S$-modules comes with three natural
sets of invariants: The graded Betti numbers $B$, the Hilbert functions
$H$ of its homology modules, and the Hilbert functions $C$ of the 
homology modules of the dualized complex $D(K)$, where
$D = \Hom(- , \omega_S)$ is the standard duality.

When $H$ and $C$ each live in only one homological degree, $\Fd$ is 
a resolution of a Cohen-Macaulay module and Boij-S\"oderberg theory
describes the positive rational cone Betti of diagrams $B$ 
and, since $H$ and $C$ are determined by $B$, the
set of the triples $(B,H,C)$.
If $H$ only lives in one homological degree, i.e. $\Fd$ is a resolution,
we saw in Subsection \ref{SubsekFurNC}
that Boij and S\"oderberg, \cite{BS2}, gave a description 
of the possible $B$ which are 
projections onto the first coordinate of such triples, up to rational multiple.

\begin{problem}
Describe all triples $(B,H,C)$ that can occur for a complex of
free $S$-modules $\Fd$, up to rational multiple.
Also describe all such triples under
various natural conditions on $B, H$ and $C$. 
\end{problem}


\bibliographystyle{amsplain}
\bibliography{Bibliography}

\end{document}